\theoremstyle{plain}
\newtheorem {theorem}{Theorem}[section]
\newtheorem {lemma}[theorem]{Lemma}
\newtheorem {corollary} [theorem]{Corollary}
\newtheorem {proposition} [theorem]{Proposition}
\theoremstyle{definition}
\newtheorem{definition}[theorem]{Definition}
\newtheorem{remark}[theorem]{Remark}
\theoremstyle{remark}
\newcommand{\Hn}{\mathbb H^n}
\newcommand{\R}{\mathbb R}
\newcommand{\N}{\mathbb N}
\renewcommand{\H}{\mathbb H}
\newcommand{\Ln}{\mathcal{L}^{2n+1}}
\newcommand{\p}{\partial}
\newcommand{\Ex}{\mathrm{Exc}}
\newcommand{\e}{\varepsilon}
\renewcommand{\theta}{\vartheta}
\newcommand{\la}{\lambda}
\newcommand{\C}{\mathbb C}
\newcommand{\wh}{\widehat}
\newcommand{\wt}{\widetilde}
\newcommand{\s}{\sigma}
\newcommand{\ex}{\Ex}
\newcommand{\barint}
{\rule[.036in]{.12in}{.009in}\kern-.16in \displaystyle\int}
\newcommand{\SQmu}{\mathcal S^{2n+1}}
\newcommand{\hfr}{\partial^\ast\!}
\newcommand{\res}
{\mathop{\hbox{\vrule height 7pt width .5pt depth 0pt \vrule
height .5pt width 6pt depth 0pt}}\nolimits}
\newcommand{\W}{\mathbb W}
\newcommand{\Lrz}{(\Lambda,r)}
\newcommand{\spt}{\mathrm{spt}\:}
\newcommand{\leb}{\mathcal L}
\newcommand{\dive}{{\mathrm{div}}}
\newcommand{\divg}{{\mathrm{div}}\!_g}
\newcommand{\nablaH} {{\nabla\!_H} }
\newcommand{\Si}{\Sigma}
\newcommand{\Haus}{\mathcal H}
\newcommand{\si}{\sigma}
\newcommand{\ep}{\varepsilon}
\newcommand{\pa}{\partial }
\newcommand{\KR}{K}
\newcommand{\ee}{\mathrm{e}}
\newcommand{\h} {\text{\large\Fontamici h}}
\long\def\MSC#1\EndMSC{\def\arg{#1}\ifx\arg\empty\relax\else
     {\par\narrower\noindent
     {\small\it 2010 Mathematics Subject Classification.} \small #1\par}\fi}
\long\def\KEY#1\EndKEY{\def\arg{#1}\ifx\arg\empty\relax\else
     {\par\narrower\noindent
     {\small\it Keywords and Phrases.} \small #1\par}\fi}
\author[Monti]{Roberto Monti}
\address[Monti and Vittone]{Universit\`a di Padova, Dipartimento di Matematica,
via Trieste 63, 35121 Padova, Italy}
\email{monti@math.unipd.it}
\author[Vittone]{Davide Vittone}
\address[Vittone]{ Institut f\"ur Mathematik, Universit\"at Z\"urich, Winterthurerstrasse 190, CH-8057 Z\"urich, Switzerland}
\email{vittone@math.unipd.it}
\begin{document}

\date{\today}

\title[Height estimate and slicing formulas]
{Height estimate and slicing formulas\\
in the Heisenberg group}

\begin{abstract}
We prove a height-estimate (distance from the tangent hyperplane) for $\Lambda$-minima of the perimeter in the sub-Riemannian Heisenberg group. The estimate is in terms of a power of the excess ($L^2$-mean oscillation of the normal) and its proof is based on a new coarea formula for rectifiable sets in the Heisenberg group.
\end{abstract}

\maketitle

\section{Introduction}

In this article, we continue the research 
project started in \cite{MV} and
\cite{M1} on the regularity
of $H$-perimeter minimizing boundaries in the Heisenberg group 
$\H^n$. Our goal is to prove the
so-called {\em height-estimate}
for sets that are {\em $\Lambda$-minima} and have small {\em excess} inside suitable
cylinders, see Theorem \ref{teoaltezza}. 
The  proof follows the scheme of the median choice for the measure of the
boundary in
certain half-cylinders
together with a lower dimensional isoperimetric inequality on slices.
For minimizing currents in $\R^n$, the principal ideas of the argument 
go back to Almgren's paper \cite{A} and are carried over by Federer
in his Theorem 5.3.4 in \cite{F}. The argument can be also found in the Appendix
of  \cite{SS} and,  
for $\Lambda$-minima of perimeter in $\R^n$, in \cite{maggi}.

Our main technical effort is the proof of  
a coarea formula (slicing formula)
for intrinsic rectifiable
sets, see Theorem \ref{teo:rettificabili22}. This formula is established
in Section \ref{two} and has a nontrivial character because the domain of integration
and its slices need not be rectifiable in the standard sense. 
The relative
isoperimetric inequalities that are used
in the slices  reduce  to a single isoperimetric inequality
in  one slice that is relative to a family of varying domains
with uniform isoperimetric constants. This uniformity 
can be established using 
the results on regular domains
in Carnot groups of step 2 of \cite{MM} and the isoperimetric inequality
in \cite{GN}, see
Section \ref{trebis}.

The $2n+1$-dimensional  Heisenberg group  is the manifold
$\H^n=\mathbb C^n
\times\R$, $n\in\N$, endowed with the group product
\begin{equation}
 \label{law!}
   (z,t)\ast (\zeta,\tau) = \big( z+\zeta, t+\tau+ 2 \, \mathrm{Im}\langle   z
, \bar \zeta\rangle \big),
\end{equation}
where $t,\tau\in\R$, $z,\zeta\in \mathbb C^n$ and $\langle   z ,
\bar\zeta\rangle  =
z_1\bar\zeta_1+\ldots+z_n\bar\zeta_n$.
The Lie algebra of left-invariant vector fields in $\H^n$ is
spanned by the vector fields
\begin{equation}
 \label{XY}
  X_j = \frac{\p}{\p x_j}+2y_j\frac{\p}{\p t}, \quad
  Y_j = \frac{\p}{\p y_j}-2x_j\frac{\p}{\p t}, \quad
  \textrm{ and } \quad
  T = \frac{\p}{\p t},
\end{equation}
with $z_j = x_j+i y_j$ and $j=1,\ldots,n$. 
We denote by $H$ the horizontal sub-bundle of $T\Hn$.
Namely, for any $p=(z,t) \in\Hn$ we let $$H_ p = \mathrm{span}
\big\{X_1(p),\ldots,X_n(p), Y_1(p), \ldots, Y_n(p)\big\}.$$
A horizontal section $\varphi\in C^1_c(\Omega; H)$, where 
  $\Omega\subset\H^n$ is an open set,
is
a vector field   of the form
\[
          \varphi = \sum_{j=1}^n \varphi_j X_j+ \varphi_{n+j} Y_j,
\]
where $\varphi_j \in C^1_c( \Omega)$. 

Let $g$ be the  left-invariant Riemannian metric on $\H^n$ 
that makes orthonormal the vector fields $X_1,\dots,Y_n,T$ 
in \eqref{XY}.  For tangent vectors $V,W\in T\Hn$ we let
\[
   \langle V,W\rangle_g = g(V,W)\quad\textrm{and}\quad
   |V|_g = g(V,V)^{1/2}.
\]
The sup-norm 
with
respect to   $g$
of a horizontal section  $\varphi\in C^1_c(\Omega; H)$  is
\[
          \|\varphi\|_g = \max_{p\in\Omega}|\varphi(p)|_g.
\]
The Riemannian   divergence of $\varphi$ is
\[
     \divg \varphi =   \sum_{j=1}^n  X_j\varphi_j+ Y_j \varphi_{n+j} .
\]
The metric $g$ induces a volume form 
on $\Hn$ that is left-invariant. Also the Lebesgue measure
$\Ln$ on $\Hn$ is left-invariant, 
and by the uniqueness of the  Haar measure  
the volume  induced by $g$ is the Lebesgue measure $\Ln$. In fact, the
proportionality
constant is $1$.

The \emph{$H$-perimeter} of a $\leb^{2n+1}$-measurable set
$E\subset\H^n$ in an open set $\Omega\subset\H^n$ is
\[
   \mu_E(\Omega ) 
=\sup\left\{ \int_E \divg \varphi \, d\leb^{2n+1}   : 
\varphi \in C^1_c(\Omega;H),\|\varphi\|_g\leq 1 \right\}.
\]
If $\mu_E(\Omega)<\infty$ we say that $E$ has finite $H$-perimeter in $\Omega$.
If $\mu_E(A)<\infty$ for any open set $A\subset\subset\Omega$,
we say that $E$
has locally finite $H$-perimeter in $\Omega$.
In this case, the open sets mapping $A\mapsto \mu_E(A)$  extends to a Radon
measure $\mu_E$ on $\Omega$ that is called \emph{$H$-perimeter measure} 
induced by $E$.
Moreover, there exists a $\mu_E$-measurable function $\nu_E:\Omega\to H$ such
that $|\nu_E|_g=1$ $\mu_E$-a.e.~and
the Gauss-Green integration by parts formula 
\[
   \int_\Omega \langle \varphi, \nu_E\rangle_g  \, d\mu_E = - \int _\Omega \divg
\varphi\, d\leb^{2n+1}
\]
holds for any $\varphi\in C^1_c(\Omega; H)$.
 The vector $\nu_E$ is called
\emph{horizontal
inner normal} of $E$ in $\Omega$.

The Kor\`anyi 
norm of $p=(z,t)\in \H^n$ is  
$\| p\|_{\KR} =(|z|^4+t^2)^{1/4}$.
For any $r>0$ and $p \in\H^n$, we define the balls
\[
 B_r = \big\{q\in \H^n: \| q\|_\KR<r\big\}\quad\text{and}
\quad
 B_r(p)= \big\{ p\ast q\in\H^n: q\in B_r\big\}.
\]
{The \emph{measure theoretic boundary} of a measurable set $E \subset
\H^n$ is  the set
\[
 \partial E  = \big\{p\in\H^n : \textrm{$\leb^{2n+1} (E\cap B_r(p))>0$ and
$ \leb^{2n+1}(B_r(p)\setminus E)>0$ for all $r>0$}\big\}.
\]
For a set $E$ with locally finite $H$-perimeter, the $H$-perimeter measure
$\mu_E$ is concentrated on $\partial E$ and, actually, on a subset $\partial^\ast E$ of $\partial E$, see below. Moreover, up to modifying $E$ on a Lebesgue negligible set, one can always assume that $\partial E$ coincides with the topological boundary of $E$, see \cite[Proposition 2.5]{SCV}}.

\begin{definition}
       \label{deflambdamin}
Let $\Omega\subset\Hn$ be an open set, 
$\Lambda\in[0,\infty)$, and $r\in
(0,\infty]$. We say that a  set $E\subset\Hn$ with locally finite $H$-perimeter in $\Omega$
is a {\em
$\Lrz$-minimum of $H$-perimeter in $\Omega$} 
if, 
for any measurable set $F\subset\Hn$,   
$p\in\Omega$, and $s<r$
such that   $E\Delta F\subset\subset
B_s(p) \subset \subset  \Omega$,
there holds
\[
 \mu_E (B_s(p))\leq \mu_F (B_s(p)) + \Lambda \Ln(E\Delta F),
\]
where $E\Delta F = E\setminus F\cup F\setminus E$.

We say that  $E$ is {\em locally $H$-perimeter minimizing in $\Omega$} if, for any measurable set $F\subset\Hn$ and any open set $U$ such that $E\Delta F\subset\subset U\subset\subset \Omega$, there holds $ \mu_E (U)\leq \mu_F (U)$.
\end{definition}

\noindent
We will often use the term {\em $\Lambda$-minimum}, rather than 
$\Lrz$-minimum, when the role of $r$ is not relevant.
In Appendix A, we list without proof some elementary properties of
$\Lambda$-minima.

We introduce 
the notion of cylindrical excess.
The \emph{height function} $\h
:\H^n\to\R$ is defined by  $\h(p)=p_1$, where
$p_1$ is the first coordinate  of $p=(p_1,\ldots,p_{2n+1})\in\H^n$.
The set 
$\W=\{p\in \H^n:\h(p)=0\}$
is the vertical hyperplane 
passing through $0\in\H^n$ and orthogonal to the left-invariant
vector field $X_1$.
The disk  in $\W$  of radius $r>0$ centered at $0\in\W$ 
induced by the Kor\`anyi  norm is the set   
$D_r = \big\{p\in \W:\| p\|_{\KR} <r\big\}$.
The intrinsic cylinder with central 
section $D_r$ and height $2r$ is the set
\[
                C_r=D_r\ast (-r,r)\subset\Hn.
\]

\noindent Here and in the sequel, we use
the notation $D_r\ast(-r,r) = 
\{ w\ast (s\ee_1)\in\H^n: w\in D_r,\, s\in(-r,r)\big\}$,
where $s\ee_1= (s,0,\ldots,0)\in\H^n$.
The cylinder $C_r$ is comparable with the ball $B_r=\{\| p\|_\KR<r\}$.
Namely, there exists a constant $k=k(n)\geq 1$ such that
for any $r>0$ we have
\begin{equation}
   \label{eqpallecilindri}
B_{r/k}\subset C_r \subset B_{kr}.
\end{equation}

By a rotation of the system of coordinates, it is enough
to consider excess in cylinders with basis in $\W$ and axis $X_1$.

\begin{definition}[Cylindrical excess]\label{def:cilexc}
Let $E\subset \H^n$ be a set with locally finite $H$-perimeter.
We define the excess of $E$ in the cylinder $C_r$ oriented by the
vector
$\nu= -X_1$ as
\[
\ex (E,r,\nu)=\frac{1}{2 r^{2n+1}}
 \int_{C_r}|\nu_E - \nu |_g^2 \,d\mu_E,
\]
where $\mu_E$ is the $H$-perimeter measure of $E$ and $\nu_E$ is
its horizontal inner normal.
%
\end{definition}

\begin{theorem}[Height estimate]
   \label{teoaltezza}
Let $n\geq 2$. 
There exist constants $\e_0=\e_0(n)>0$ 
and $c_0=c_0(n)>0$ with the following
property. If $E\subset \H^n $ is a $\Lrz$-minimum 
of $H$-perimeter 
in the cylinder $C_{4k^2r}$,
$\Lambda r\leq 1$, $0\in\partial E$, and
\[
\ex (E,4k^2r,\nu)\leq \e_0,
\]
then
\begin{equation}\label{STA}
   \sup\big\{ |\h(p) | \in [0,\infty) : p\in \partial E\cap C_{r}\big\} \leq
c_0\, r\,
\ex (E,4k^2r,\nu)^{\tfrac{1}{2(2n+1)}}.
\end{equation}
The constant $k=k(n)$ is the one in \eqref{eqpallecilindri}.
\end{theorem}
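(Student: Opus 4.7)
The plan is to follow the median-slice / isoperimetric scheme of Almgren-Federer (Euclidean version in the appendix of \cite{SS} or Chapter 22 of \cite{maggi}), using the new coarea formula of Theorem \ref{teo:rettificabili22} in place of the Euclidean slicing and the uniform relative isoperimetric inequality on intrinsic slices from Section \ref{trebis}. I argue by contradiction: assume there is $p_0\in\partial E\cap C_r$ with $L:=|\h(p_0)|>c_0\, r\,\ep^{1/(2(2n+1))}$ where $\ep:=\ex(E,4k^2r,\nu)$; by symmetry $\h(p_0)=L>0$. For $s\in(0,L)$ introduce
\[
v(s):=\leb^{2n+1}\big(E\cap C_{2r}\cap\{\h>s\}\big),\qquad a(s):=\SQmu\big(E\cap\W_s\cap C_{2r}\big),
\]
where $\W_s:=s\ee_1\ast\W$ is the translated vertical hyperplane; Fubini applied to the $1$-Lipschitz function $\h$ gives $v'(s)=-a(s)$ a.e.

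The cut-off competitor $F=E\setminus\{\h>s\}$ in Definition \ref{deflambdamin}, together with the standard density lower bound for $\Lambda$-minima at $p_0$ (Appendix A) and \eqref{eqpallecilindri}, produce
\[
c_1 (L-s)^{2n+1}\;\leq\;\mu_E\big(C_{2r}\cap\{\h>s\}\big)\;\leq\; a(s)+\Lambda v(s).
\]
On the other hand, the coarea formula of Theorem \ref{teo:rettificabili22} applied to $\h$ as a slicing function of $\hfr E$ reads
\[
\int_\R P^{\W_s}(E\cap\W_s;\W_s\cap C_{2r})\, ds \;=\; \int_{C_{2r}\cap\hfr E} \sqrt{1-\langle\nu_E,X_1\rangle_g^2}\; d\mu_E,
\]
with $P^{\W_s}$ the intrinsic perimeter of the slice inside $\W_s$. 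Combining $1-\langle\nu_E,X_1\rangle_g^2\leq|\nu_E-\nu|_g^2$, Cauchy--Schwarz, the definition of the excess, and the standard upper bound $\mu_E(C_{2r})\leq C r^{2n+1}$ (Appendix A) yields
\[
\int_0^L P^{\W_s}(E\cap\W_s;\W_s\cap C_{2r})\, ds \;\leq\; C_2\, r^{2n+1}\sqrt{\ep}.
\]
The square root of $\ep$ (rather than the first power) is precisely what doubles the denominator of the final exponent.

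Section \ref{trebis} supplies, in each slice, a relative isoperimetric inequality with constant $c_3=c_3(n)$ independent of $s$: whenever $a(s)\leq\tfrac12\SQmu(\W_s\cap C_{2r})$,
\[
a(s)^{2n/(2n+1)}\;\leq\; c_3\, P^{\W_s}(E\cap\W_s;\W_s\cap C_{2r}),
\]
the exponent $2n/(2n+1)$ reflecting the intrinsic homogeneous dimension $2n+1$ of the subgroup $\W\cong\H^{n-1}\times\R$. Splitting $(0,L)$ into $\{s:a(s)\geq c_1(L-s)^{2n+1}/2\}$ and its complement (on which $\Lambda v(s)\geq c_1(L-s)^{2n+1}/2$), a brief dichotomy concludes: on the first set one plugs the lower bound for $a(s)$ into the isoperimetric inequality and integrates against the coarea upper bound; on the second set one inverts the isoperimetric inequality to control $v(s)=\int_s^\infty a(u)\, du$ by a power of the slicewise perimeter (H\"older) and then exploits the lower bound for $v(s)$ together with $\Lambda r\leq 1$. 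Either case delivers $L^{2n+1}\leq C\, r^{2n+1}\sqrt{\ep}$, i.e.~$L\leq C(n)\, r\, \ep^{1/(2(2n+1))}$, contradicting the choice of $c_0$.

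The main obstacle is the infrastructure of the two central displays. Theorem \ref{teo:rettificabili22} is indispensable because neither $\hfr E$ nor its slices $\hfr E\cap\W_s$ are rectifiable in the Euclidean sense, so the classical coarea formula does not apply; Section \ref{trebis} is indispensable to guarantee that the isoperimetric constant $c_3$ is genuinely uniform in $s$, which rests on a uniform Carnot-regularity description of the domains $\W_s\cap C_{2r}$ in the ambient group $\W$. Once these two tools are in place, the median-slice argument transplants from the Euclidean setting essentially verbatim, with the dichotomy above absorbing the $\Lambda v$ correction precisely via the hypothesis $\Lambda r\leq 1$.
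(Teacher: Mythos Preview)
Your outline identifies the right ingredients (coarea on the reduced boundary, uniform relative isoperimetric inequality on the slices, density estimates), but the execution diverges from the paper at a point where the argument actually breaks.

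\textbf{The missing median and the smallness hypothesis.} You apply the relative isoperimetric inequality on the whole interval $(0,L)$, subject only to the caveat ``whenever $a(s)\le\tfrac12\SQmu(\W_s\cap C_{2r})$''. But nothing in your setup guarantees this smallness on $(0,L)$: for $s$ just above $0$ the slice $E\cap\W_s$ can occupy nearly the entire section, and then the isoperimetric inequality gives a lower bound in terms of the \emph{complement} measure, which is useless for your chain. The paper handles this by first invoking the qualitative Lemma~\ref{lemposition} (which you omit) to localize $\partial E\cap C_1$ in a thin strip, and then by introducing the median level $s_0$ of the monotone function $f(s)=\SQmu(M_s)$, so that for $s\ge s_0$ one has $f(s)\le \SQmu(M)/2$ and hence $\leb^{2n}(E_s\cap D_1)\le \tfrac34\leb^{2n}(D_1)$. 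Only on $[s_0,s_1)$, with $s_1$ the level where $f$ drops to $\sqrt{\ep}$, is the isoperimetric inequality invoked; the remaining piece $\h(p)-s_1$ is controlled directly by the density estimate and $f(s_1)\le\sqrt\ep$. Your argument needs an analogous median choice, and without it the ``first set'' branch of your dichotomy is unjustified.

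\textbf{The $\Lambda v(s)$ branch does not close.} Your competitor comparison $c_1(L-s)^{2n+1}\le a(s)+\Lambda v(s)$ introduces a volume error that the paper never sees. On the set where $\Lambda v(s)$ dominates you propose to ``invert the isoperimetric inequality to control $v(s)=\int_s^\infty a$ by a power of the slicewise perimeter via H\"older''. But the isoperimetric inequality gives $a\le c\,(P^s)^{(2n+1)/(2n)}$, and H\"older goes the wrong way: you cannot bound $\int (P^s)^{(2n+1)/(2n)}$ by a power of $\int P^s$ without an $L^\infty$ bound on $P^s$, which you do not have. I do not see how this branch yields $L^{2n+1}\le C r^{2n+1}\sqrt\ep$. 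The paper avoids the problem entirely: instead of a minimality comparison it uses the Gauss--Green identity of Lemma~\ref{lemmisuraeccesso},
\[
\leb^{2n}(E_s\cap D_1)=-\int_{M_s}\langle\nu_E,X_1\rangle_g\,d\SQmu,
\]
which gives the two-sided bound $0\le \SQmu(M_s)-\leb^{2n}(E_s\cap D_1)\le \ex(E,1,\nu)$ with error $\ep$ rather than $\Lambda v(s)$. Minimality then enters only through the density estimates and Lemma~\ref{lemposition}. This is the key structural difference: replace your competitor inequality by the Gauss--Green identity, introduce the median $s_0$ and the level $s_1$ where $f(s)=\sqrt\ep$, and the argument becomes straightforward.
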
 

The estimate \eqref{STA} does not hold when
$n=1$. In fact,  there are sets $E\subset \H^1$ such that 
$\ex(E,C_r,\nu) = 0$ but 
$\partial E$ is not flat in $C_{\varepsilon r}$ for any $\varepsilon >0$.
See the conclusions of Proposition 3.7 in \cite{M1}. 
Theorem \ref{teoaltezza} is proved in Section \ref{tre}.

Besides local minimizers of 
$H$-perimeter, our interest in $\Lambda$-minima 
is also motivated by possible applications to isoperimetric sets. 
The height estimate is a first step in the regularity 
theory of $\Lambda$-minima of classical perimeter; 
we refer to \cite[Part III]{maggi} for a detailed account on the subject.
\medskip

In order to state the slicing formula in its general form, we need
the definition of a rectifible set in $\H^n$ of codimension $1$.
We follow closely \cite{FSSCMathAnn}, where this notion was first introduced.

The Riemannian and horizontal gradients of a function $f\in C^1(\Hn)$ are,
respectively, 
\[
\begin{split}
& \nabla f=(X_1 f)X_1 + \dots + (Y_nf)Y_n + (Tf)T, \\
& \nablaH f=(X_1 f)X_1 + \dots + (Y_nf)Y_n.
\end{split}
\]
We say that  a continuous function $f\in C(\Omega)$, with $\Omega\subset
\Hn$ open set,
is of  class $C^1_H(\Omega)$ if
the horizontal gradient $\nablaH f$ exists in the sense of distributions 
and is represented by continuous functions $X_1 f, \ldots, Y_nf$ in $\Omega$.
A set $S\subset\H^n$ is an $H$-regular hypersurface if for all $p\in S$ there
exist  $r>0$ and a function $f\in C^1_H( B_r(p))$ such that
$S\cap B_r(p) = \big\{ q\in B_r(p): f(q) = 0 \big\}$ and $\nablaH f(p)\neq 0$. Sets with $H$-regular boundary have locally finite $H$-perimeter.

For any $p=(z,t)\in\H^n$, let us define the box-norm $
 \| p \|_\infty  = \max \{ |z|, |t|^{1/2}\} $ and the balls
$U_r = \{q\in \H^n:\|q\|_\infty <r\}$ and $U_r(p) = p\ast U_r$, with $r>0$.
Let $E\subset \H^n$ be a set.
For any $s\geq0$ define the measure
\[ 
 \mathcal S ^s (E)
 =  \sup_{\delta>0} \ 
    \inf \Big\{ c(n,s) \sum_{i\in\N} r_i^s   : E\subset \bigcup_{i\in\N}  
U_{r_i}(p_i), \, r_i <\delta \Big\}.
\]
Above, $c(n,s)>0$ is a normalization constant 
that we do not need to specify,
here.
By Carath\`eodory's construction,  $E\mapsto
\mathcal S^s(E) $ is a Borel measure in $\H^n$.
When $s=2n+2$, $\mathcal S^{2n+2}$ turns out to be the Lebesgue measure
$\mathcal L^{2n+1}$.
Thus, the correct dimension to measure hypersurfaces is $s=2n+1$.
In fact, if $E$ is a set with locally finite $H$-perimeter in $\H^n$, then
we have
\begin{equation}\label{missa}
      \mu_E = \mathcal S^{2n+1} \res \partial^* E,
\end{equation}
where $\res$ denotes restriction and 
$\partial^*E$ is the $H$-reduced boundary of $E$, 
namely the set of points $p\in \Hn$ such that 
$\mu_E(U_r(p))>0$ for all $r>0$,
$\fint_{U_r(p)} \nu_E \, d\mu_E\to \nu_E(p)$ 
as $r\to0$ and $|\nu_E(p)|_g=1$.
The validity of formula \eqref{missa} depends on the geometry of the balls 
$U_r(p)$, see \cite{Ma4}. We refer the reader to 
  \cite{FSSCMathAnn} for more details on the $H$-reduced boundary.

\begin{definition}
\label{REC}
 A set $R \subset \H^n$ is $\mathcal S^{2n+1}$-rectifiable if there
exists
 a sequence of $H$-regular hypersurfaces $(S_j)_{j\in\N}$ in $\H^n$
such that
\[
 \mathcal S^{2n+1} \Big (R\setminus \bigcup_{j\in\N} S_j\Big) = 0.
\]
\end{definition}

By the results of \cite{FSSCMathAnn}, 
the $H$-reduced boundary $\partial^*E$
is  $\mathcal S^{2n+1}$-rectifiable. 
Definition \ref{REC}
is generalized in \cite{MSSC}, where the authors
study the notion of an $s$-rectifiable set in $\H^n$ 
for any integer 
$1\leq s\leq 2n+1$.

An $H$-regular surface $S$ has a continuous horizontal normal $\nu_S$ that is
locally defined up to the sign. This normal is given by the formula
\begin{equation}\label{nuesse}
 \nu_S = \frac{\nablaH f}{|\nablaH f|_g},
\end{equation}
where $f$ is a defining function for $S$. When $S=\partial E$ is the boundary
of a smooth set, then $\nu_S$ agrees with the horizontal normal $\nu_E$. 
Then, for an
 $\mathcal S^{2n+1}$-rectifiable 
set $R \subset \H^n$ there is a unit horizontal normal $\nu_R: R\to H$
that is Borel regular.
This normal is uniquely defined 
$\mathcal S^{2n+1}$-a.e.~on $R$
up the the sign, see Appendix B. However, formula \eqref{muchofrio22} below
does not depend on the  sign.

In the following theorem, $\Omega\subset\H^n$ is an open set and $u\in
C^\infty(\Omega)$ is
a smooth function. For any $s\in\R$, we denote by $\Sigma^s = \big\{
p\in\Omega: u(p) =s\big\}$ the level sets of $u$.

\begin{theorem}
   \label{teo:rettificabili22}
Let $R\subset\Omega$ be an 
$\mathcal S^{2n+1}$-rectifiable set.
Then, for a.e.~$s\in\R$ there
exists a Radon measure $\mu_R^{s}$ on  
$R\cap\Si^s$ such that for any Borel function
$h:\Omega\to[0,\infty)$ the function
\begin{equation}\label{filippo}
 s\mapsto \int_{\Omega}h\,
 \frac{|\nablaH u|_g}{|\nabla u|_g}\,d\mu_R^{s}
\end{equation}
is $\mathcal L^1$-measurable, and
 we have the coarea formula
\begin{equation}\label{muchofrio22}
\int_\R \int_{\Omega}h\,\frac{|\nablaH u|_g}
 {|\nabla u|_g}\,d\mu_R^{s}\, ds
= \int_{R}h\,\sqrt{|\nablaH u|_g^2-\langle \nu_R,\nablaH
u\rangle_g^2}\,d\SQmu.
\end{equation}
\end{theorem}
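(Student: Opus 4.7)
\emph{Reduction to a single $H$-regular surface.} By $\sigma$-additivity of Radon measures and Definition \ref{REC}, I would first decompose $R$, modulo an $\SQmu$-null set, as a disjoint countable union $R=\bigcup_j R_j$ with each $R_j$ a Borel subset of a single $H$-regular hypersurface $S_j$. Since both sides of (\ref{muchofrio22}) are countably additive in $R$, it suffices to treat the case when $R$ lies in a single $H$-regular hypersurface $S$. Localizing, one may write $S=\{f=0\}$ with $f\in C^1_H(U)$ and $\nablaH f\neq 0$ on an open neighborhood $U\supset R$, so that $\nu_R=\nablaH f/|\nablaH f|_g$ on $R$ up to sign.

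\emph{Smoothing and Riemannian coarea on each $S_k$.} Mollifying $f$ yields $f_k\in C^\infty(U')$ on $U'\Subset U$ with $f_k\to f$ and $\nablaH f_k\to\nablaH f$ locally uniformly (by the Friedrichs commutator lemma, using $f\in C^1_H$). For $k$ large, $S_k:=\{f_k=0\}$ is a smooth Riemannian hypersurface on which the classical coarea formula
\begin{equation*}
\int_{S_k}\psi\,|\nabla^{S_k}\!u|_g\,d\mathcal{H}^{2n}_g=\int_\R\int_{S_k\cap\Si^s}\psi\,d\mathcal{H}^{2n-1}_g\,ds
\end{equation*}
holds for Borel $\psi\geq0$, with $\nabla^{S_k}u$ the Riemannian tangential gradient. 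Using (\ref{missa}) in the form $\SQmu\res S_k=(|\nablaH f_k|_g/|\nabla f_k|_g)\,\mathcal{H}^{2n}_g\res S_k$ together with $\nu_{S_k}=\nablaH f_k/|\nablaH f_k|_g$, I would choose
\begin{equation*}
\psi_k:=h\,\frac{|\nablaH f_k|_g}{|\nabla f_k|_g}\,\frac{\sqrt{|\nablaH u|_g^2-\langle\nu_{S_k},\nablaH u\rangle_g^2}}{|\nabla^{S_k}\!u|_g}.
\end{equation*}
With this choice the LHS collapses to $\int_{S_k}h\sqrt{|\nablaH u|_g^2-\langle\nu_{S_k},\nablaH u\rangle_g^2}\,d\SQmu$, the $S_k$-version of the RHS of (\ref{muchofrio22}), while setting
$d\mu_{R,k}^s:=(\psi_k/h)(|\nabla u|_g/|\nablaH u|_g)\,d\mathcal{H}^{2n-1}_g\res(S_k\cap\Si^s)$
rewrites the corresponding right-hand side as $\int_\R\int h\,(|\nablaH u|_g/|\nabla u|_g)\,d\mu_{R,k}^s\,ds$, matching the form of the LHS of (\ref{muchofrio22}).

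\emph{Passage to the limit and main obstacle.} To finish, I plan to let $k\to\infty$. The locally uniform convergence $\nablaH f_k\to\nablaH f$ combined with the weak-$\ast$ convergence $\SQmu\res S_k\rightharpoonup\SQmu\res S$ of the surface measures gives convergence of the LHS to $\int_R h\sqrt{|\nablaH u|_g^2-\langle\nu_R,\nablaH u\rangle_g^2}\,d\SQmu$. For the RHS, the slice measures $\mu_{R,k}^s$, viewed as Radon measures on $U$, have uniformly bounded total masses on compacts by the coarea identity, so a diagonal extraction produces weak-$\ast$ limits $\mu_R^s$ supported on $R\cap\Si^s$ for almost every $s$; Fubini on $U\times\R$ then yields the $\mathcal{L}^1$-measurability of (\ref{filippo}), and dominated convergence delivers (\ref{muchofrio22}). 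The hard part will be the rigorous construction of $\mu_R^s$: because $R\cap\Si^s$ is the intersection of a merely $C^1_H$-regular hypersurface with a smooth one, it generally fails to be rectifiable in the standard sense, so $\mu_R^s$ cannot be identified with any classical Hausdorff measure on the slice. Verifying that the weak-$\ast$ limit is independent of the mollifying sequence $(f_k)$ is the delicate technical point; I expect to handle it either by a density argument on test functions or by a parallel intrinsic construction through the graph parametrization of $S$ over $\W$ supplied by the implicit function theorem for $C^1_H$ surfaces, to which Fubini on $\W$ can be applied directly.
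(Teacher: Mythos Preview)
Your reduction to a single $H$-regular hypersurface matches the paper, and the Riemannian coarea computation on a smooth surface is correct; your density $\psi_k$ is exactly the one that makes $\mu_{R,k}^s$ coincide with the $H$-perimeter of $E_k\cap\Sigma^s$ in $\Sigma^s$ (cf.\ Step~3 of Lemma~\ref{lem:coarealiscia}). The gap is the passage $k\to\infty$. Weak-$\ast$ compactness via diagonal extraction produces subsequential limits for at most countably many $s$, not a well-defined measure $\mu_R^s$ for a.e.~$s$; and even granting existence, the $\mathcal L^1$-measurability of \eqref{filippo} does not follow from Fubini---Fubini requires the family $s\mapsto\mu_R^s$ to be already given, it does not construct it. You correctly flag uniqueness of the limit as ``the delicate technical point,'' but neither proposed fix is carried out, so as written the argument does not define $\mu_R^s$.

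The paper avoids this by \emph{defining} $\mu_R^s$ intrinsically, with no approximation of $S$: for $S=\partial E$ $H$-regular one sets $\mu_R^s:=\mu_{E\cap\Sigma^s}^{\Sigma^s}$, the horizontal perimeter of the slice inside the smooth hypersurface $\Sigma^s$ (Definition~\ref{DEF_SIGMA}). The coarea formula for this canonical object is then assembled from three pieces: a coarea \emph{inequality} (Proposition~\ref{prop:disugcoarea}) obtained by approximating $E$ by smooth sets and using lower semicontinuity of $\mu_E^s$ on each slice; an \emph{approximate} coarea formula at generic points of $S$ (Lemma~\ref{lem:approxcoarea}), proved not by mollifying $S$ but through the intrinsic $C^1_H$ graph parametrization of $S$ and Fubini in the graph coordinates---this is precisely the ``intrinsic construction through the graph parametrization'' you mention only parenthetically; and a Vitali covering (Proposition~\ref{prop:normalidiverse}) that combines the two into the exact identity. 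If your mollification route can be completed, it still needs a canonical target for the slice limits, and the natural candidate is exactly this intrinsic $\mu_E^s$; establishing that identification would essentially reproduce the paper's argument.
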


\noindent
Theorem \ref{teo:rettificabili22}
is proved in Section \ref{two}.
When $R\cap \Sigma^s$ is a regular subset of $\Sigma^s$, the measures
$\mu_R^s$ are natural horizontal perimeters defined in $\Sigma^s$.

Coarea formulae in the Heisenberg group are known only 
for slicing of sets with positive Lebesgue measure, 
see \cite{Ma1,Ma3}. Theorem \ref{teo:rettificabili22} 
is, to our knowledge, the first example of slicing 
of lower-dimensional sets in a sub-Riemannian framework. 
Also, Theorem \ref{teo:rettificabili22} is a nontrivial extension
of the Riemannian coarea formula, because the set $R$ and the slices
$R\cap \Sigma^s$ need not be rectifiable in the standard sense, 
see \cite{KSC}. We need the coarea formula 
\eqref{muchofrio22} in the proof of Theorem \ref{teoaltezza},
see Section \ref{pollo}.
 
We conclude the introduction by stating a different but equivalent formulation of the coarea formula \eqref{muchofrio22} that is closer to standard coarea formulae. This alternative formulation holds only when $n\geq 2$: when $n=1$, the right hand side in \eqref{eq:coarealiscia2a} might not be well defined, see Remark \ref{rem:notrascH1}.

\begin{theorem}\label{teo:coarealiscia2}
Let $\Omega\subset\Hn$, $n\geq 2$, be an open set,
$u\in C^\infty(\Omega)$ be a smooth function, and
$R\subset\Omega$ be an $\mathcal S^{2n+1}$-rectifiable set. 
Then, for any Borel
function $h:\Omega\to[0,\infty)$ there holds
\begin{equation}
     \label{eq:coarealiscia2a}
\int_\R \int_\Omega h\,d\mu_R^{s}\,ds    
     =   
\int_{R} h\, |\nabla u|_g\sqrt{1-\left\langle \nu_R,
\tfrac{\nablaH u}{|\nablaH u|_g}\right\rangle_g^2}\,d\SQmu,
\end{equation}
where $\mu_R^s$ are the measures given by Theorem
\ref{teo:rettificabili22}.

\end{theorem}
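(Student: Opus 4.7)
The plan is to deduce \eqref{eq:coarealiscia2a} from the coarea formula \eqref{muchofrio22} by a substitution that absorbs the ratio $|\nablaH u|_g/|\nabla u|_g$. Let $N=\{p\in\Omega:|\nablaH u(p)|_g=0\}$ denote the characteristic set of $u$, and define the Borel function $\phi=\mathbf{1}_{\Omega\setminus N}\cdot|\nabla u|_g/|\nablaH u|_g$.

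First I would apply Theorem \ref{teo:rettificabili22} with $h$ replaced by $h\phi$. On $\Omega\setminus N$ one has $\phi\cdot|\nablaH u|_g/|\nabla u|_g=1$, while on $R\setminus N$ the factor $\phi\cdot\sqrt{|\nablaH u|_g^2-\langle\nu_R,\nablaH u\rangle_g^2}$ simplifies to $|\nabla u|_g\sqrt{1-\langle\nu_R,\nablaH u/|\nablaH u|_g\rangle_g^2}$, and on $N$ both integrands vanish by construction. Formula \eqref{muchofrio22} thus collapses to
\begin{equation*}
\int_\R\int_{\Omega\setminus N}h\,d\mu_R^s\,ds=\int_{R\setminus N}h\,|\nabla u|_g\sqrt{1-\bigl\langle\nu_R,\tfrac{\nablaH u}{|\nablaH u|_g}\bigr\rangle_g^2}\,d\SQmu,
\end{equation*}
that is, the identity \eqref{eq:coarealiscia2a} restricted away from the characteristic set $N$.

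To finish I would remove the restrictions to $\Omega\setminus N$ and $R\setminus N$. On the left this reduces to checking $\mu_R^s(\Omega\cap N)=0$ for a.e.\ $s\in\R$, a property which can be read off from the explicit construction of $\mu_R^s$ carried out in Section \ref{two}, since there these measures arise as horizontal perimeters on the non-characteristic part of $R\cap\Sigma^s$. On the right the integrand must be shown to vanish on $R\cap N$: on $R\cap N\cap\{Tu=0\}$ this is immediate because $|\nabla u|_g=0$, so the only delicate locus is $R\cap N\cap\{Tu\neq 0\}$, which consists of those points $p\in R$ at which $\nabla u(p)$ is parallel to $T$, i.e.\ characteristic points of the smooth Euclidean level set $\Sigma^{u(p)}$.

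The main obstacle is to show that for $n\ge 2$ this last locus is $\SQmu^{2n+1}$-negligible. I would cover $R$ up to an $\SQmu^{2n+1}$-null set by a countable family of $H$-regular hypersurfaces $S_j$, and combine the Balogh-type estimate bounding by $2n$ the Heisenberg Hausdorff dimension of the characteristic set of a smooth Euclidean hypersurface in $\H^n$ with a Fubini-type slicing along the fibres of $u$. The assumption $n\ge 2$ is what makes this combination effective; for $n=1$ the dimensional arithmetic degenerates and, as observed in Remark \ref{rem:notrascH1}, the right-hand side of \eqref{eq:coarealiscia2a} can even fail to be well-defined.
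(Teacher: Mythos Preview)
Your core strategy coincides with the paper's: both deduce \eqref{eq:coarealiscia2a} by substituting into \eqref{muchofrio22} a function that cancels the factor $|\nablaH u|_g/|\nabla u|_g$. The paper does this more compactly by plugging in $h\,|\nabla u|_g/(\ep+|\nablaH u|_g)$ and sending $\ep\to 0$ via monotone convergence, which produces your indicator cut-off $\mathbf 1_{\Omega\setminus N}$ in the limit without introducing it by hand. For the negligibility of $R\cap\{\nablaH u=0,\ \nabla u\neq 0\}$ on the right-hand side, the paper gives a short intrinsic argument (Lemma~\ref{lem:trascurabile}): at such a point $p$ one uses $n\ge 2$ to pick horizontal left-invariant $V,W$ tangent to the $H$-regular piece through $p$ with $[V,W]=T$, forcing (say) $VWu(p)\neq 0$, so that $\{Wu=0\}$ is $H$-regular near $p$ with normal transversal to $\nu_R(p)$; the intersection is then a codimension-two $H$-regular surface and hence $\SQmu$-null. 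This is cleaner than your Balogh-plus-Fubini route, which as written is vague about how a dimension bound for the characteristic set of the smooth level surfaces $\Sigma^s$ converts into an $\SQmu$-negligibility statement on the $H$-regular pieces $S_j$.

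One caution on the left-hand side: your claim that $\mu_R^s(\Omega\cap N)=0$ ``can be read off from the explicit construction \ldots\ on the non-characteristic part'' is not accurate as stated. In the proof of Theorem~\ref{teo:rettificabili22} the measures $\mu_R^s$ are defined as restrictions of the full $H$-perimeters $\mu_{E_j}^s=\mu^{\Sigma^s}_{E_j\cap\Sigma^s}$ via Definition~\ref{DEF_SIGMA}, with no a priori restriction to the non-characteristic locus of $\Sigma^s$; the local representation \eqref{perEs} is only derived near points where $\nablaH u\neq 0$. The paper's own terse proof also leaves this point implicit, so this is not a defect peculiar to your argument, but the justification you give does not actually establish it.
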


\section{Proof of the coarea formula}
\label{two}

\subsection{Horizontal perimeter on submanifolds}

Let $\Si\subset\Hn$ be a  $C^\infty$ hypersurface. 
We define the horizontal tangent bundle $H\Si$ letting,
for any $p\in\Si$,
\[
 H_p \Si=H_p \cap T_p \Si.
\]
In general, the rank of $H\Si$ is not constant.
This depends on the  presence of {\em characteristic points} on $\Si$, i.e.,
points such that $H_p = T_p\Si$. 
For points $p\in \Si$ such that $H_p\neq T_p\Si$, we have 
$\mathrm{dim}(H_p\Si)=2n-1$.

We denote by $\si_\Si$ the surface measure on $\Si$ induced 
by the Riemannian metric $g$ restricted to the
tangent bundle $T\Si$.

\begin{definition} \label{DEF_SIGMA}
Let $F\subset \Si$ be a Borel set and 
let $\Omega\subset\Si$ be an  open set.
We define the {\em $H$-perimeter of $F$ in $\Omega$}
\begin{equation}\label{SUPDEF}
\mu_F^\Si(\Omega)=\sup\left\{ \int_F \divg \varphi \:d\si_\Si : 
\varphi \in C^1_c(\Omega;H\Si),\|\varphi\|_g\leq 1 \right\}.
\end{equation}
We say that the set $F\subset \Si$
has locally finite $H$-perimeter in $\Omega$
if  $\mu_F^\Si(A)<\infty$
for any  open set $A\subset\subset  \Omega $.
\end{definition}

\medskip

By Riesz' theorem, if 
$F\subset\Si$ has locally finite $H$-perimeter in $\Omega$,
then the open set mapping $A \mapsto \mu_F^\Si(A)$ 
extends to a Radon measure on $\Omega$, called \emph{$H$-perimeter measure}
of $F$. 


\begin{remark}\label{rem:tuttoliscio}
If $F\subset\Si$ is an open set with smooth boundary, 
then by the divergence theorem we have, 
for any $\varphi \in C^1_c(\Omega;H\Si)$,
\begin{equation}\label{smotto}
      \int_F \divg \varphi\:d\si_\Si =
    \int_{\partial F} \langle N_{\partial F},\varphi\rangle_g\:
      d\lambda_{\partial F} ,
\end{equation}
where $N_{\partial F}$ is the Riemannian outer unit normal 
to $\partial F$ and $d\lambda_{\partial F}$ is the 
Riemannian $(2n-1)$-dimensional volume form on $\partial F$
induced by $g$.

From the sup-definition \eqref{SUPDEF} and from \eqref{smotto},
we deduce that the $H$-perimeter measure of $F$ has the following representation
\[
     \mu_F^\Si = |N_{\partial F} ^{H\Si}|_g \,\lambda_{\partial F},
\]
where $N_{\partial F}^{H\Si}\in H\Si$ is the $g$-orthogonal 
projection
of $N_{\partial F}\in T\Si$ onto $H\Si$.
\end{remark}

This formula can be generalized as follows. 
We denote by $\Haus_g^{2n-1}$ the $(2n-1)$-dimensional 
Hausdorff measure in $\Hn$ induced by the metric $g$.

\begin{lemma}\label{lem:quasiliscio}
Let $F,\Omega\subset\Si$ be open sets and assume   that 
there exists a compact  set $N\subset \partial F$ 
such that $\Haus_g^{2n-1}(N)=0$ and $(\partial F\setminus N)\cap \Omega $ 
is a smooth   $(2n-1)$-dimensional surface. Then, we have
\begin{equation}\label{eq:quasiliscio}
\mu_F^\Si  \res \Omega= |N_{\partial F} ^{H\Si}|_g \,
 \lambda_{\partial F\setminus N}\res \Omega.
\end{equation}
\end{lemma}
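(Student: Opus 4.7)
The strategy is to reduce to the smooth identity of Remark~\ref{rem:tuttoliscio} by a removable-singularity argument. I would insert a cutoff $\chi_\varepsilon$ vanishing near $N$ and equal to $1$ far from $N$, apply Remark~\ref{rem:tuttoliscio} to the perturbed test field $\chi_\varepsilon\varphi$ on $(\p F\setminus N)\cap\Omega$ (where $\p F$ is a smooth $(2n-1)$-surface), and then pass to the limit $\varepsilon\to 0$.

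The key construction is the following. Because $N$ is compact with $\Haus_g^{2n-1}(N)=0$, for every $\varepsilon>0$ there exist finitely many Riemannian balls $B_i=B_{r_i}(p_i)$ in $\Si$ covering $N$ and satisfying $\sum_i r_i^{2n-1}<\varepsilon$. From standard bump functions $\chi_i\in C^\infty(\Si)$ with $\chi_i=0$ on $B_i$, $\chi_i=1$ outside $2B_i$, and $|\nabla^\Si\chi_i|_g\le C/r_i$, set $\chi_\varepsilon=\prod_i\chi_i$. Using the local volume bound $\sigma_\Si(2B_i)\le Cr_i^{2n}$ on the smooth hypersurface $\Si$, one obtains
\[
\int_\Si|\nabla^\Si\chi_\varepsilon|_g\,d\sigma_\Si\le C'\sum_i r_i^{2n-1}<C'\varepsilon.
\]

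Now fix $\varphi\in C^1_c(\Omega;H\Si)$ with $\|\varphi\|_g\le 1$. Since $\chi_\varepsilon\varphi$ has compact support in the open set $\Omega\setminus N$, where $F$ has smooth boundary, Remark~\ref{rem:tuttoliscio} yields
\[
\int_F\divg(\chi_\varepsilon\varphi)\,d\sigma_\Si=\int_{(\p F\setminus N)\cap\Omega}\chi_\varepsilon\,\langle N_{\p F},\varphi\rangle_g\,d\lambda_{\p F\setminus N}.
\]
Expanding $\divg(\chi_\varepsilon\varphi)=\chi_\varepsilon\divg\varphi+\langle\nabla^\Si\chi_\varepsilon,\varphi\rangle_g$ and sending $\varepsilon\to 0$, dominated convergence handles the two principal terms, while the cutoff error is bounded by $\|\varphi\|_\infty\int_\Si|\nabla^\Si\chi_\varepsilon|_g\,d\sigma_\Si<C'\varepsilon\to0$. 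This gives
\[
\int_F\divg\varphi\,d\sigma_\Si=\int_{(\p F\setminus N)\cap\Omega}\langle N_{\p F}^{H\Si},\varphi\rangle_g\,d\lambda_{\p F\setminus N},
\]
where the projection $\langle N_{\p F},\varphi\rangle_g=\langle N^{H\Si}_{\p F},\varphi\rangle_g$ is justified by $\varphi\in H\Si$. Comparing with the sup-definition \eqref{SUPDEF} via Riesz representation identifies the total-variation measure $\mu_F^\Si\res\Omega$ with $|N_{\p F}^{H\Si}|_g\,\lambda_{\p F\setminus N}\res\Omega$, which is precisely \eqref{eq:quasiliscio}.

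The main obstacle is the cutoff construction in the second paragraph: it is exactly here that the hypothesis $\Haus_g^{2n-1}(N)=0$ is used in an essential way, by translating the vanishing $(2n-1)$-content of $N$ into an $L^1$-smallness of $\nabla^\Si\chi_\varepsilon$ through the volume estimate $\sigma_\Si(2B_i)\le Cr_i^{2n}$. Once the cutoff error is shown to vanish, the remaining manipulations of the smooth divergence theorem and of the sup-definition are routine.
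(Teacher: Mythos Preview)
Your proposal is correct and follows essentially the same approach as the paper's proof: cover $N$ by finitely many small balls with $\sum r_i^{2n-1}<\varepsilon$, build a cutoff that kills a neighbourhood of $N$, apply the divergence theorem on the smooth part of $\partial F$, and use the volume bound $\sigma_\Si(B_i)\lesssim r_i^{2n}$ to show the error term is $O(\varepsilon)$. The only cosmetic difference is that the paper uses a partition of unity $f^\varepsilon+\sum_i g_i^\varepsilon=\chi_\Omega$ (with $f^\varepsilon$ playing the role of your $\chi_\varepsilon$), whereas you use a product $\chi_\varepsilon=\prod_i\chi_i$; the resulting estimates are identical.
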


\begin{proof}
For any $\ep>0$ there exist points $p_i\in\Hn$ and radii
$r_i\in(0,1)$, $i=1,\ldots, M$, such that
\[
   N \subset\bigcup_{i=1}^M 
     B_g(p_i,r_i)\quad\text{and}\quad\sum_{i=1}^M r_i^{2n-1}<\ep,
\]
where $B_g(p,r)$ denotes the ball in $\Hn$
with center $p$ and radius $r$ with respect to the 
metric $g$. By a  partition-of-the-unity argument, there exist
functions  $f^\ep,g_i^\ep
 \in C^\infty(\Omega;[0,1])$, $i=1,\ldots,M $, such that
\begin{itemize}
\item[i)] $\displaystyle  f^\ep +g_1^\ep+\ldots+g_M^\ep 
  =\chi_\Omega$ ;

\item [ii)] $ f^\ep=0$   on $\bigcup_{i=1}^M B_g(p_i,r_i/2)$; 

\item[iii)] $ \spt g_i^\ep\subset B_g(p_i,r_i)$ for each $i$;

\item[iv)]  $ |\nabla g_i^\ep|_g \leq Cr_i^{-1}$ for a constant 
    $C>0$ independent of $\ep$.
\end{itemize}
Hence, for any horizontal section 
$\varphi\in C^1_c(\Omega;H\Si)$ we have
\begin{equation}
\label{UN}
\begin{split}
\int_F \divg\varphi \, d\si_\Si  = & \int_{F}
\divg(f^\ep\varphi)d\si_\Si 
 + \sum_{i=1}^M \int_{F\cap B_g(p_i,r_i)} \divg(g_i^\ep\varphi)d\si_\Si\\
 = & \int_{\partial F\setminus N} \langle f^\ep \varphi,
 N_{\partial F}\rangle_g d\la_{\partial F\setminus N} 
 + \sum_{i=1}^M  \int_{F\cap B_g(p_i,r_i)} \divg(g_i^\ep\varphi)d\si_\Si,
\end{split}
\end{equation}
where, by iv),  
\begin{equation} \label{DU}
\begin{split}
\left|\sum_{i=1}^M  \int_{F\cap B_g(p_i,r_i)} \divg(g_i^\ep\varphi)
  d\si_\Si\right| \leq& \sum_{i=1}^M 
\int_{B_g(p_i,r_i)} \left( \|\dive _g\varphi\|_{L^\infty} 
 + Cr_i^{-1}\right) d\sigma_\Si\\
\leq &  C' \sum_{i=1}^M  r_i^{2n-1}\ \leq C'\ep,
\end{split}
\end{equation}
with a constant   $C'>0$ independent of $\ep$. 

Letting   $\ep\to0$, we have $f^\ep \to 1$ 
pointwise on $\partial F\setminus N$, by i) and iii).
Then, from \eqref{UN} and \eqref{DU}
we obtain
\[
\int_F \divg\varphi \:d\si_\Si = \int_{\partial F\setminus N} \langle
\varphi,N_{\partial F}\rangle_g d\la_{\partial F\setminus N} 
\]
and  the claim  \eqref{eq:quasiliscio} follows by standard arguments.
\end{proof}

\subsection{Proof of Theorem \ref{teo:rettificabili22}}

Let  $\Omega\subset\Hn$ be an open set and let   $u\in C^\infty(\Omega)$.
By Sard's theorem, for a.e.~$s\in \R$ the level set 
 $$\Si^s=\big\{p\in\Omega:u(p)=s\big\}$$
is a smooth hypersurface and, moreover, we have $\nabla u\neq 0$ 
on $\Sigma^s$.

Let $E\subset\Hn$ 
be a Borel set such that $E\cap \Si^s$ has (locally) 
finite $H$-perimeter in $\Omega\cap \Si^s$,
in the sense of Definition \ref{DEF_SIGMA}.
Then on  $\Omega\cap \Sigma^s$
we have the $H$-perimeter measure 
$\mu ^{\Sigma^s}_{E\cap \Sigma^s}$ induced by $E\cap\Sigma^s$.
We shall use the notation
\[
     \mu^{s}_E = \mu ^{\Sigma^s}_{E\cap \Sigma^s}
\]
to denote a measure on $\Omega$ that is supported
on $\Omega\cap\Sigma^s$.


We start with the following  coarea formula in the smooth case,
that is deduced from the Riemannan formula.

\begin{lemma}\label{lem:coarealiscia}
Let $\Omega\subset\Hn$ be an open set 
and $u\in C^\infty(\Omega)$. Let $E\subset\Hn$ be an open 
set with $C^\infty$ boundary in $\Omega$ such that 
$\mu_E(\Omega)<\infty$. Then we have 
\begin{equation}\label{eq:coarealiscia}
\int_\R \int_\Omega\frac{|\nablaH u|_g}{|\nabla u|_g}\,d\mu_E^{s}\,ds    
  =   
\int_{\Omega}\sqrt{|\nablaH u|_g^2-\langle \nu_E,\nablaH u\rangle_g^2}\,
d\mu_E,
\end{equation}
where $\mu_E$ is the $H$-perimeter measure of $E$ and $\nu_E$
is its  horizontal normal. 
\end{lemma}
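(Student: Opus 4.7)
I would reduce both sides of \eqref{eq:coarealiscia} to integrals on the smooth hypersurface $\partial E$ against the Riemannian surface measure $\sigma_{\partial E}$, and then verify the resulting pointwise identity by a direct computation in an adapted frame.

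The right-hand side is immediate: since $E$ has smooth boundary in $\Omega$, the same divergence-theorem argument used in Remark \ref{rem:tuttoliscio} (carried out on $\Omega$ instead of $\Si$) gives $\mu_E=|N_{\partial E}^H|_g\,\sigma_{\partial E}$, where $N_{\partial E}^H=\pi_H N_{\partial E}$ is the horizontal part of the Riemannian outer unit normal and the horizontal normal is $\nu_E=N_{\partial E}^H/|N_{\partial E}^H|_g$. For the left-hand side I would first invoke Sard's theorem applied both to $u$ on $\Omega$ and to $u|_{\partial E}$ on $\partial E$: for a.e.~$s\in\R$, both $\Si^s$ and $\partial E\cap\Si^s$ are smooth and the intersection is transverse, so $E\cap\Si^s$ is an open subset of $\Si^s$ with smooth boundary $\partial E\cap\Si^s$. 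Then Remark \ref{rem:tuttoliscio} applied on $\Si^s$ gives
\[
\mu_E^{s}=|N^{H\Si^s}|_g\,\lambda_{\partial E\cap\Si^s},
\]
where $N^{H\Si^s}$ is the projection onto $H\Si^s$ of the Riemannian unit normal to $\partial E\cap\Si^s$ within $\Si^s$. Finally, the classical Riemannian coarea formula on the smooth manifold $\partial E$ applied to $u|_{\partial E}$ rewrites
\[
\int_\R\!\int_{\partial E\cap\Si^s}\!F\,d\lambda_{\partial E\cap\Si^s}\,ds=\int_{\partial E}F\,|\nabla^{\partial E}u|_g\,d\sigma_{\partial E},
\]
with $\nabla^{\partial E}u$ the tangential Riemannian gradient; choosing $F=\tfrac{|\nablaH u|_g}{|\nabla u|_g}\,|N^{H\Si^s}|_g$ converts the left-hand side into an integral on $\partial E$ as well.

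The whole problem then reduces to the pointwise identity
\[
\frac{|\nablaH u|_g}{|\nabla u|_g}\,|N^{H\Si^s}|_g\,|\nabla^{\partial E}u|_g=|N_{\partial E}^H|_g\,\sqrt{|\nablaH u|_g^2-\langle\nu_E,\nablaH u\rangle_g^2}
\]
at each $p\in\partial E$ with $u(p)=s$. I would verify it by choosing a $g$-orthonormal basis $e_1,\dots,e_{2n},T$ of $T_p\Hn$ with $e_1=\nablaH u/|\nablaH u|_g$ (when $\nablaH u\neq 0$), decomposing $N_{\partial E}=\sum_{i=1}^{2n}a_i e_i+bT$, and explicitly expanding the three projections appearing on the left: $\nabla^{\partial E}u=\nabla u-\langle\nabla u,N_{\partial E}\rangle_g N_{\partial E}$, the unit vector in $T\Si^s$ orthogonal to $\partial E\cap\Si^s$ (i.e.\ the normalization of $\pi_{T\Si^s}N_{\partial E}$), and the further projection of this onto $H\Si^s=H\cap T\Si^s$. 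Both sides squared collapse to $|\nablaH u|_g^2\,|N_{\partial E}^H|_g^2-\langle N_{\partial E},\nablaH u\rangle_g^2$. Points where $\nablaH u=0$ or $N_{\partial E}^H=0$ make both sides of the identity vanish, so the characteristic points of $\Si^s$ and of $\partial E$ do not obstruct the argument.

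The main obstacle is this pointwise identity: the whole geometric content of \eqref{eq:coarealiscia} is encoded in the bookkeeping of the four subspaces $H$, $T\partial E$, $T\Si^s$, and $H\Si^s$ together with the projections among them, and the cancellations that collapse the three-factor product on the left to the single square-root on the right must be carried out carefully. Once this linear-algebra identity is in hand, the proof is essentially a composition of two Riemannian coarea formulas with the standard representation of $H$-perimeter for smooth sets.
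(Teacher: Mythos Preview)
Your proposal is correct and follows essentially the same route as the paper: represent both $\mu_E$ and $\mu_E^s$ through the Riemannian normals, apply the Riemannian coarea formula on $\partial E$ for $u|_{\partial E}$, and reduce everything to the pointwise identity relating $|N^{H\Sigma^s}_{\partial E^s}|_g$ to the horizontal quantities. The paper's Step~3 verifies exactly your pointwise identity (in the equivalent form \eqref{eq:fattore}) via the auxiliary unit vector $W=\tfrac{Tu}{|\nabla u|_g}\tfrac{\nablaH u}{|\nablaH u|_g}-\tfrac{|\nablaH u|_g}{|\nabla u|_g}T$, which spans the orthogonal complement of $H\Sigma^s$ in $T\Sigma^s$; your adapted-frame computation would reach the same expression $|\nablaH u|_g^2\,|N_{\partial E}^H|_g^2-\langle N_{\partial E},\nablaH u\rangle_g^2$.

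One small difference worth noting: you dispose of the non-transverse points (where $N_{\partial E}=\pm\nabla u/|\nabla u|_g$) by applying Sard to $u|_{\partial E}$, so that for a.e.~$s$ the slice $\partial E\cap\Sigma^s$ contains no such points and Remark~\ref{rem:tuttoliscio} applies directly. The paper instead keeps those points in a closed set $C$, shows via the Riemannian coarea formula that $\lambda_{\partial E\cap\Sigma^s}(C\cap\Sigma^s)=0$ for a.e.~$s$, and invokes Lemma~\ref{lem:quasiliscio} to get the representation of $\mu_E^s$. Your route is a bit shorter here, but both are correct.
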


\begin{proof}
The integral in the left hand side is well defined,
because for a.e.~$s\in\R$ there holds $\nabla u \neq 0 $ on $\Si^s$.
By the  coarea formula for 
Riemannian manifolds, see e.g.~\cite{BZ}, 
for any  Borel function $h:\partial E\to[0,\infty]$ we have
\begin{equation}\label{eq:coareaclass}
   \int_\R \int_{\pa E\cap \Si^s} h\,d\la_{\pa E\cap \Si^s}\:ds   =   
\int_{\partial E}h\:|\nabla^{\partial E}u|_g\,d\si_{\partial E},
\end{equation}
where $\nabla^{\partial E}u$ is the tangential gradient 
of $u$ on $\partial E$. 
Then we have  
\begin{equation}\label{eq:gradtangusuE}
\nabla^{\partial E}u=\nabla u- \langle \nabla u,N_{\partial E}\rangle_g 
    N_{\partial E}\quad\text{and}\quad 
 |\nabla^{\partial E}u|_g=
 \sqrt{|\nabla u|_g^2 - \langle \nabla u,N_{\partial E}\rangle_g^2}.
\end{equation}

{\em Step 1.} Let us define the set
\[
C=\left\{p\in\pa E\cap\Omega :\nabla u(p)\neq 0 
\text{ and }N_{\pa E}(p)=\pm \frac{\nabla u(p)}{|\nabla u(p)|_g}\right\}.
\]
If $s\in\R$ is such that $\nabla u\neq 0$ on $\Si^s$, 
then $C\cap \Si^s$ is a closed set in $\Si^s$. 
Using the coarea formula 
\eqref{eq:coareaclass} with the function $h=\chi_C$, 
we get
\[
\int_\R \la_{\pa E\cap \Si^s}(C)\,ds = 
\int_{C} |\nabla^{\partial E}u|_g\,d\si_{\partial E} =0 ,
\]
because we have $\nabla^{\partial E}u =0$ on $C$. 
In particular, we deduce that 
\begin{equation}\label{eq:a.e.s}
\text{$C\cap \Si^s$ is a closed set 
 in $\Si^s$\quad and\quad} \la_{\pa E\cap \Si^s}(C\cap \Si^s)=0
\quad \text{for a.e.~$s\in\R$}.
\end{equation}

If $p\in\Si^s$ is a point such that
$\nabla u(p)\neq 0$ and $ p\notin C$,
then  
$\Si^{s}$ is a smooth hypersurface in a neighbourhood of $p$ and
$E^s = E\cap\Si^{s}$ is a domain in $\Si^{s}$ with smooth boundary
in a neighbourhood of $p$. Moreover, we have $(\partial E\cap \Si^s) \setminus C
= 
\partial E^s \setminus C$.
Then, from \eqref{eq:a.e.s} and from Lemma \ref{lem:quasiliscio} 
we conclude that  for a.e.~$s\in\R$ we have
\begin{equation}\label{eq:pppperimetri}
\mu_E^s = |N_{\pa E^s}^{H\Si^s}|_g\la_{\pa E^s}.
\end{equation}
By \eqref{eq:a.e.s} and \eqref{eq:pppperimetri}, there holds
\begin{equation}\label{eq:a.e.s222}
\mu_{E}^{s}(C\cap \Si^s)=\int_{C\cap \Si^s} 
|N_{\pa E^s}^{H\Si^s}|_g   d \la_{\pa E^s}=
0\qquad\text{for a.e. }s\in\R.
\end{equation}

{\em Step 2.} We prove \eqref{eq:coarealiscia} by plugging 
into \eqref{eq:coareaclass}
the Borel function  $h: \pa E\to[0,\infty]$
\[
h=\left\{\begin{array}{ll}
\displaystyle\frac{|N_{\pa E}^{H}|_g
    \sqrt{|\nablaH u|^2_g-\langle\nu_E,\nablaH u\rangle^2_g}}
    {|\nabla u|_g\sqrt{1-\langle N_{\pa E},\frac{\nabla u}{|\nabla u|_g}
      \rangle^2_g}} & \text{on }\pa E\setminus (C\cup \{\nabla u=0\})\\
0 & \text{on }C\cup \{\nabla u=0\}.
\end{array}\right.
\]
Above, $N_{\partial E}^H$ is the projection 
of the Riemannian normal $N_{\partial E}$
onto $H$ and $\nu_E$ is the horizontal normal. Namely, we have
\[
 N_{\pa E}^H= N_{\pa E}-\langle N_{\pa E},T\rangle_g T\quad \text{and}\quad
  \nu_E=\frac{N_{\pa E}^H}{|N_{\pa E}^H |_g}.
\]
The $H$-perimeter measure of $E$ is 
\begin{equation}\label{eq:pavia} 
   \mu_E=|N_{\pa E} ^H |_g \si_{\pa E}.
\end{equation}
Using 
\eqref{eq:gradtangusuE} and 
\eqref{eq:pavia}, we find
\begin{equation}
\label{eq:godot1}
\begin{split}
\int_{\partial E}   h\,
   |\nabla^{\partial E}u|\,d\si_{\partial E} 
& = 
\int_{\partial E\setminus (C\cup \{\nabla u=0\})} 
|N_{\pa E}^H|_g 
 \sqrt{|\nablaH u|_g^2-\langle\nu_E,\nablaH u\rangle_g^2}  
  \, d\si_{\partial E} 
\\
&= \int_{\partial E\setminus (C\cup \{\nabla u=0\})} 
\sqrt{|\nablaH u|^2_g-\langle\nu_E,\nablaH u\rangle_g^2}  
\,d\mu_E
\\
&= \int_{\partial E} \sqrt{|\nablaH u|^2_g-\langle\nu_E,\nablaH u\rangle_g^2} 
 \,
d\mu_E,
\end{split}
\end{equation}
where the last equality is justified by the fact that
if $p\in C\cup \{\nabla u=0\}$ then
$$\sqrt{|\nablaH u(p)|^2_g-\langle\nu_E(p),\nablaH u(p)\rangle_g^2}=0.$$

For a.e.~$s\in\R$, we have $\nabla u\neq 0$ on $\Si^s$.
Using \eqref{eq:pavia} and the fact that  $h=0$ 
on $C\cup \{ \nablaH u=0\} $,
letting $\Lambda ^ s = 
(\partial E\cap \Si^s)\setminus (C\cup \{\nablaH u=0\})$
we obtain 
\begin{equation}\label{godot2}
\begin{split}
\int_\R \int_{\partial E\cap \Si^s} h\,d\la_{\pa E^s}\,ds
& = 
 \int_\R \int_{\Lambda ^s} 
   \frac{|N_{\pa E}^H|_g
   \sqrt{|\nablaH u|^2_g-\langle\nu_E,\nablaH u\rangle^2_g}}
   {|\nabla u|_g\sqrt{1-\langle N_{\pa E},\frac{\nabla u}
     {|\nabla u|_g}\rangle_g^2}} \,d\la_{\pa E^s}\,ds 
 \\
  &= \int_\R \int_{\Lambda^s} 
 \frac{ |\nablaH u|_g}{|\nabla u|_g}\,  \vartheta^s\, 
   d\la_{\pa E^s}\,ds,
\end{split}
\end{equation}
where we let 
\[
\vartheta^s  = \frac{\sqrt{|N_{\pa E}^H|_g^2-
   \langle N_{\pa E}^H,\frac{\nablaH u}{|\nablaH u|_g}\rangle_g^2}}
   {\sqrt{1-\langle N_{\pa E},\frac{\nabla u}{|\nabla u|_g}\rangle^2_g}} .
\]
We will prove in \emph{Step 3} that, for any $s\in\R$ 
such that $\nabla u\neq 0$ on $\Si^s$, there holds
\begin{equation}\label{eq:fattore}
\vartheta^s =
 |N_{\pa E^s}^{H\Si^s}|_g\quad\text{on }
\Lambda^s.
\end{equation}
Using  \eqref{eq:fattore}, \eqref{eq:pppperimetri}, and
\eqref{eq:a.e.s222}
formula \eqref{godot2} becomes
\begin{equation}
\label{godot3}
\begin{split}
\int_\R \int_{\partial E\cap \Si^s} h\,d\la_{\pa E\cap \Si^s}\,ds  
&= \int_\R \int_{\Lambda^s}
 \frac{ |\nablaH u|_g }{|\nabla u|_g}  |N_{\pa E^s} ^{H\Si^s}|_g\,
  d\la_{\pa E^s}\,ds 
 \\
& =   \int_\R \int_{\Lambda^s} 
 \frac{ |\nablaH u|_g }{|\nabla u|_g}  \,d\mu_{E}^{s}\,ds
 \\
&
=  \int_\R \int_{\partial E\cap \Si^s} 
 \frac{ |\nablaH u|_g }{|\nabla u|_g}  \,d\mu_{E}^{s}\,ds.
\end{split}
\end{equation}
The proof is complete,
 because \eqref{eq:coarealiscia} follows 
from \eqref{eq:coareaclass}, \eqref{eq:godot1} and \eqref{godot3}.

\medskip

{\em Step 3.} We prove claim  \eqref{eq:fattore}. 
Let us introduce the vector field $W$
in $\Omega \setminus \{ \nablaH u =0\}$
\[
   W=\frac{Tu}{|\nabla u|_g}\frac{\nablaH u}{|\nablaH u|_g}
    - \frac{|\nablaH u|_g}{|\nabla u|_g}T.
\]
It can be checked that $|W|_g=1$ and $Wu=0$.
In particular, for a.e.~$s$ we have $W\in T\Si^s$. 
Moreover, $W$ is $g$-orthogonal to $H\Si^s$ 
because any vector in $H\Si^s$ is orthogonal both to 
$\nablaH u$ and to $T$. It follows that
\[
N_{\pa E^s}^{H\Si^s} = N_{\pa E^s} - \langle N_{\pa E^s}, W\rangle_g
\]
and, in particular,
\[
|N_{\pa E^s}^{H\Si^s}|_g^2 = 1 - \langle N_{\pa E^s}, W\rangle_g^2.
\]

Starting from the formula 
\[
N_{\pa E^s} = 
\frac{N_{\pa E}-\langle N_{\pa E},\frac{\nabla u}{|\nabla
u|_g}\rangle_g\frac{\nabla u}{|\nabla u|_g}}{|N_{\pa E}-\langle N_{\pa
E},\frac{\nabla u}{|\nabla u|_g}\rangle_g\frac{\nabla u}{|\nabla u|_g}|_g} = 
\frac{N_{\pa E}-\langle N_{\pa E},\frac{\nabla u}{|\nabla
u|_g}\rangle_g\frac{\nabla u}{|\nabla u|_g}}{\sqrt{1-\langle N_{\pa
E},\frac{\nabla u}{|\nabla u|_g}\rangle_g^2}},
\]
we find 
\[
  |N_{\pa E^s}^{H\Si^s}|_g^2 = \frac{M}{1-\langle N_{\pa E},\frac{\nabla
u}{|\nabla u|_g}\rangle_g^2},
\]
where we let
\[
  M = 1-\langle N_{\pa E},\frac{\nabla u}{|\nabla u|_g}\rangle_g^2 
-\left\langle  N_{\pa E}-\langle N_{\pa E},\frac{\nabla u}{|\nabla u|_g}
\rangle_g\frac{\nabla u}{|\nabla u|_g}, W  \right\rangle_g^2.
\]
We claim that on the open set $\{\nablaH u\neq0\}$ there holds 
\begin{equation}\label{eq:casino}
\begin{split}
M=
 |N_{\pa E}^H|_g^2-\langle N_{\pa E}^H,\tfrac{\nablaH u}{|\nablaH u|_g}
 \rangle^2_g,
\end{split}
\end{equation}
and formula  \eqref{eq:fattore} follows from \eqref{eq:casino}.
Using the 
identity
$\nabla  u = \nablaH u + (Tu) T$
 and the 
orthogonality
\[
\left\langle N_{\pa E} - \langle N_{\pa E},\frac{\nabla u}{|\nabla u|_g}
\rangle_g\frac{\nabla u}{|\nabla u|_g},  \nabla u \right\rangle_g=0,
\]
we find 
\begin{equation}\label{eq:bici222}
\begin{split}
M &= 
 1-\left \langle N_{\pa E},\frac{\nablaH u + (Tu)T}
{|\nabla u|_g}\right \rangle_g^2 - 
\Big(\  \frac{Tu}{|\nabla u|_g} \langle N_{\pa E},\frac{\nablaH u}
{|\nablaH u|_g}\rangle_g
- \frac{|\nablaH u|_g}{|\nabla u|_g}\langle N_{\pa E},T\rangle_g 
\Big)^2
\\
& 
=1- \langle N_{\pa E},\frac{\nablaH u}{|\nablaH u|_g}\rangle_g^2 
\frac{|\nablaH u|_g^2+(Tu)^2}{|\nabla u|_g^2} - 
\langle N_{\pa E},T\rangle_g^2\frac{|\nablaH u|_g^2+(Tu)^2}
{|\nabla u|_g^2} 
\\
& =  1- \langle N_{\pa E},\frac{\nablaH u}{|\nablaH u|_g}\rangle_g^2  
- \langle N_{\pa E},T\rangle_g^2
\\
&
= 1- \langle N_{\pa E},T\rangle_g^2 - 
\Big( \langle N_{\pa E},\frac{\nablaH u}{|\nablaH u|_g}\rangle_g
- \Big\langle\langle N_{\pa E},T\rangle_g 
 { T,\frac{\nablaH u}{|\nablaH u|_g}\Big\rangle_g}  \Big)^2
\\
&=
|N_{\pa E}^H|_g^2-\langle N_{\pa E}^H,\frac{\nablaH u}{|\nablaH u|_g}
 \rangle^2_g.
\end{split}
\end{equation}

\noindent This ends the proof.
\end{proof}

We prove a coarea inequality.

\begin{proposition}\label{prop:disugcoarea}
Let $\Omega\subset\Hn$ be an open set,
 $u\in C^\infty(\Omega)$ a smooth function,
 $E\subset\Hn$  a set with finite $H$-perimeter in $\Omega$,
 and let $h:\pa E\to [0,\infty]$ be a Borel function. Then we 
have 
\begin{equation}
\label{ineq:coareagen222}
\int_\R \int_\Omega h\,\frac{|\nablaH u|_g}
 {|\nabla u|_g}\,d\mu_E^{s}\,ds 
   \leq
\int_{\Omega}h\,\sqrt{|\nablaH u|_g^2-\langle \nu_E,\nablaH u\rangle^2_g}\,
 d\mu_E.
\end{equation}
\end{proposition}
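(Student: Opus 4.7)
The plan is to approximate $E$ by sets with smooth boundary, apply Lemma \ref{lem:coarealiscia} to them, and pass to the limit. By a standard approximation result for sets of finite $H$-perimeter in $\Hn$ (Garofalo--Nhieu, Franchi--Serapioni--Serra Cassano), there exist open sets $E_k\subset\Hn$ with $C^\infty$ boundary such that $\chi_{E_k}\to\chi_E$ in $L^1(\Omega)$ and $\mu_{E_k}(\Omega)\to\mu_E(\Omega)$. Testing the Gauss--Green formula against $\varphi\in C^1_c(\Omega;H)$ and using the $L^1$-convergence shows that $\nu_{E_k}\mu_{E_k}\rightharpoonup\nu_E\mu_E$ weakly-$*$ as $H$-valued Radon measures; together with the convergence of masses $|\nu_{E_k}\mu_{E_k}|(\Omega)=\mu_{E_k}(\Omega)\to\mu_E(\Omega)=|\nu_E\mu_E|(\Omega)$, this yields \emph{strict} convergence of $\nu_{E_k}\mu_{E_k}$ to $\nu_E\mu_E$. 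The proof of Lemma \ref{lem:coarealiscia} carries over verbatim if one multiplies the Borel test function fed into the Riemannian coarea formula \eqref{eq:coareaclass} by an arbitrary bounded nonnegative Borel weight $h$, producing, for each $k$, the identity
$$
\int_\R\!\!\int_\Omega h\,\frac{|\nablaH u|_g}{|\nabla u|_g}\,d\mu_{E_k}^s\,ds = \int_\Omega h\,\sqrt{|\nablaH u|_g^2-\langle\nu_{E_k},\nablaH u\rangle_g^2}\,d\mu_{E_k}.
$$

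Assume first that $h\in C_c(\Omega;[0,\infty))$. The integrand on the right-hand side, viewed as a function $(p,\xi)\mapsto h(p)\sqrt{|\xi|_g^2|\nablaH u(p)|_g^2-\langle\xi,\nablaH u(p)\rangle_g^2}$ on $\Omega\times H$, is continuous and positively $1$-homogeneous in the fiber variable $\xi$. Reshetnyak's continuity theorem, applied to the strict convergence of $\nu_{E_k}\mu_{E_k}$ to $\nu_E\mu_E$, then yields convergence of the right-hand side to its analogue written with $\nu_E$ and $\mu_E$. For the left-hand side, Fubini combined with the Riemannian coarea formula applied to $u$ provides, up to extracting a subsequence, a full-measure set of $s\in\R$ along which $\nabla u\neq 0$ on $\Si^s$ and $\chi_{E_k\cap\Si^s}\to\chi_{E\cap\Si^s}$ in $L^1(\Omega\cap\Si^s,\sigma_{\Si^s})$. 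For such $s$ the sup-definition \eqref{SUPDEF} of the $H$-perimeter on $\Si^s$ is lower semicontinuous under $L^1$-convergence of characteristic functions, and $h\,|\nablaH u|_g/|\nabla u|_g$ is bounded and continuous on $\Si^s$, giving
$$
\int_\Omega h\,\frac{|\nablaH u|_g}{|\nabla u|_g}\,d\mu_E^s\le\liminf_{k\to\infty}\int_\Omega h\,\frac{|\nablaH u|_g}{|\nabla u|_g}\,d\mu_{E_k}^s\qquad\text{for a.e.~}s\in\R.
$$
Integrating in $s$ via Fatou's lemma and combining with the convergence of the right-hand side produces \eqref{ineq:coareagen222} for continuous $h$; the extension to arbitrary Borel $h\ge 0$ follows by standard monotone approximation from below.

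The main technical point will be the strict convergence of $\nu_{E_k}\mu_{E_k}$ to $\nu_E\mu_E$: weak-$*$ convergence alone does not suffice to pass to the limit in the nonlinear right-hand side, and Reshetnyak's continuity theorem for $1$-homogeneous integrands is the device that converts the total-variation convergence of the $H$-perimeters into the required limit. Everything else amounts to the classical interplay between Fatou's lemma and lower semicontinuity of the slice $H$-perimeters under $L^1$ approximation of characteristic functions.
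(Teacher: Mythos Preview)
Your proof follows the same strategy as the paper's: smooth approximation of $E$, Reshetnyak's continuity theorem for the right-hand side, and Fatou's lemma combined with lower semicontinuity of the slice $H$-perimeters for the left-hand side. Your application of Reshetnyak directly on $\Omega$ with a test function $h\in C_c(\Omega;[0,\infty))$ is in fact slightly cleaner than the paper's route, which first localizes to precompact open sets $A\subset\subset\Omega$ satisfying $\mu_E(\partial A)=0$ before invoking Reshetnyak and then exhausts $\Omega$.

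The one place that needs tightening is your final step. An arbitrary nonnegative Borel function is \emph{not} a monotone increasing limit of continuous compactly supported functions---only lower semicontinuous functions are---so ``standard monotone approximation from below'' does not literally carry the inequality from $C_c^+$ to all Borel $h\geq 0$. The paper handles this as follows: from $C_c^+$ one passes by monotone convergence to characteristic functions of open sets; then, for a Borel set $B$, one uses the outer regularity of the Radon measure $\sqrt{|\nablaH u|_g^2-\langle\nu_E,\nablaH u\rangle_g^2}\,\mu_E$ to find open $B_j\supset B$ with right-hand sides converging, while on the left-hand side only the trivial monotonicity $\chi_B\leq\chi_{B_j}$ is needed. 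Once the inequality holds for characteristic functions of Borel sets, simple functions and monotone convergence finish the job. This is a routine fix, but it is worth stating correctly.
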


\begin{proof} The coarea inequality
\eqref{ineq:coareagen222} follows from the smooth case of Lemma
\ref{lem:coarealiscia} by an approximation and lower semicontinuity argument.

{\em Step 1. } By \cite[Theorem 2.2.2]{FSSChouston},
there exists a sequence of smooth
sets $(E_j)_{j\in\N}$  in $\Omega$ such that
\[
\chi_{E_j}\stackrel{L^1(\Omega)}{\to}\chi_E\quad\text{as }j\to\infty
\qquad\text{and}\qquad
\lim_{j\to\infty}\mu_{E_j}(\Omega)=\mu(\Omega).
\]
By a straightforward adaptation of the proof 
of \cite[Proposition 3.13]{AFP}, we also have
 that $\nu_{E_j} \mu_{E_j}\to \nu_E \mu_E$ weakly$^\ast$ in $\Omega$.
Namely, for any 
$\psi\in C_c(\Omega;H)$ there holds 
\[
\lim_{j\to\infty} \int_\Omega \langle \psi, \nu_{E_j}\rangle_g
 \,d\mu_{E_j} = \int_\Omega \langle \psi,\nu_E\rangle _g \,
d\mu_E.
\]

Let $A\subset\subset \Omega$ be an open set 
such that $\lim_{j\to\infty}\mu_{E_j}(A)= \mu_E (A)$. 
By  Reshetnyak's continuity theorem (see e.g.~\cite[Theorem 2.39]{AFP}),
we have 
\[
\lim_{j\to\infty}\int_{A}f(p,\nu_{E_j}(p))\,d\mu_{E_j} 
 = \int_{A}f(p,\nu_{E}(p))\,d \mu_{E} 
\]
for any continuous and bounded function $f$. In particular, 
\begin{equation}\label{eq:lagna}
 \lim_{j\to\infty} \int_{A}\,
 \sqrt{|\nablaH u|_g^2-\langle \nu_{E_j},\nablaH u\rangle^2_g}\,d\mu_{E_j}
   =   \int_{A}\,
    \sqrt{|\nablaH u|_g^2-\langle \nu_E,\nablaH u\rangle^2_g}\,d \mu_E .
\end{equation}

\medskip

{\em Step 2.} Let $(E_j)_{j\in\N}$ be the sequence introduced in {\em Step 1}.
Then, for a.e.~$s\in\R$ we have
\[
\text{$\nabla u\neq 0$ on $\Sigma^s$}\qquad\text{and}
 \qquad\chi_{E_j}{\to}\chi_E\text{ in $L^1(\Si^s,\si_{\Si^s})$ as }
 j\to\infty.
\]
In particular, for any such $s$ and for any open set
$A\subset \Si^s\cap\Omega$ 
there holds
\[
 \mu_E^{s} (A)\leq \liminf_{j\to\infty} \mu_{E_j}^{s}(A).
\]

From Fatou's Lemma and from 
the continuity of $\tfrac{|\nablaH u|_g}{|\nabla u|_g}$ on $\Si^s$, 
it follows that
\begin{align*}
\int_{A} \frac{|\nablaH u|_g}{|\nabla u|_g}\,d\mu_E^{s} 
& = \int_0^\infty \mu_E^{s}
 \left( \left\{p\in A: \tfrac{|\nablaH u|_g}{|\nabla u|_g}(p)> t\right\}
 \right)\,dt
 \\
& \leq \int_0^\infty \liminf_{j\to\infty}\mu_{E_j}^{s}
 \left( \left\{p\in A 
 :\tfrac{|\nablaH u|_g}{|\nabla u|_g}(p)> t\right\}\right)\,dt
  \\
& \leq \liminf_{j\to\infty}\int_0^\infty \mu_{E_j}^{s}
 \left( \left\{p\in A :\tfrac{|\nablaH u|_g}{|\nabla u|_g}(p)> t\right\}
 \right)\,dt\\
&= \liminf_{j\to\infty} \int_{A} \frac{|\nablaH u|_g}{|\nabla u|_g}\,
 d\mu_{E_j}^{s}.
\end{align*}
Using again Fatou's Lemma and Lemma \ref{lem:coarealiscia},
\begin{align*}
\int_\R \int_{A} \frac{|\nablaH u|_g}{|\nabla u|_g}
\,d\mu_E^{s}\,ds
&  \leq \int_\R \liminf_{j\to\infty} \int_{A} \frac{|\nablaH u|_g}
{|\nabla u|_g}\,d\mu_{E_j}^{s}
\,ds
\\
& \leq  \liminf_{j\to\infty} \int_\R \int_{A} 
\frac{|\nablaH u|_g}{|\nabla u|_g}\,d\mu_{E_j}^{s}
\,ds
\\
& = \liminf_{j\to\infty} \int_{A}\,
\sqrt{|\nablaH u|_g^2-\langle \nu_{E_j},\nablaH u\rangle^2_g}\,
d\mu_{E_j}.
\end{align*}
This, together with \eqref{eq:lagna}, gives
\[
\int_\R \int_{A} \frac{|\nablaH u|_g}{|\nabla u|_g}\,d\mu_E^{s}
\,ds 
\leq   \int_{A}\:\sqrt{|\nablaH u|_g^2-\langle \nu_E,\nablaH u
\rangle^2_g}\,d\mu_E.
\]

\medskip

{\em Step 3.} Any open set $A\subset\Omega$ 
can be approximated by a sequence $(A_k)_{k\in\N}$ 
of open sets such that
\[
A_k\subset\subset\Omega,\quad A_k\subset A_{k+1},\quad \bigcup_{k=1}^\infty 
A_k=A\quad\text{and}\quad \mu_E(\partial A_k)=0.
\]
In particular, for each $k\in\N$ we have  
\begin{align*}
\liminf_{j\to\infty} \mu_{E_j}(A_k) & \leq \limsup_{j\to\infty} 
 \mu_{E_j}(\overline{A}_k) \leq \mu_E(\overline{A}_k)
  \\
& = \mu_E(A_k) \leq \liminf_{j\to\infty} \mu_{E_j}(A_k).
\end{align*}
Hence, the inequalities are equalities, i.e., 
$\mu_E(A_k) = \displaystyle \lim_{j\to\infty} \mu_{E_j}(A_k)$. 
By \emph{Step 2},  for any $k\in\N$  there holds
\[
\int_\R \int_{A_k} \frac{|\nablaH u|_g}{|\nabla u|_g}\,d\mu_E^{s}\,ds 
\leq   \int_{A_k}\,
 \sqrt{|\nablaH u|_g^2-\langle \nu_E,\nablaH u\rangle^2_g}\,d\mu_E.
\]
By monotone convergence, letting  $k\to\infty$ we obtain for any open set
$A \subset\Omega$ 
\[
        \int_\R \int_A \frac{|\nablaH u|_g}{|\nabla u|_g}\,
   d \mu_E^{s}\,ds 
 \leq   \int_{A}\,
 \sqrt{|\nablaH u|_g^2-\langle \nu_E,\nablaH u\rangle^2_g}\,d\mu_E.
\]

By a standard approximation argument, it is
enough to prove \eqref{ineq:coareagen222} for the
characteristic function $h=\chi_B$ 
of a Borel set $B\subset\pa E$. Since the measure
$
\sqrt{|\nablaH u|_g^2-\langle \nu_{E},\nablaH u\rangle^2_g}\,\mu_E$
is a Radon measure on $\pa E$, there exists a sequence 
of open sets $B_j$ such that $B\subset B_{j}$ for any $j\in\N$ and
\[
    \lim_{j\to\infty} \int_{B_j}\,
      \sqrt{|\nablaH u|_g^2-\langle \nu_{E},\nablaH u\rangle^2_g}\,
 d \mu_{E} 
 = \int_{B}\,\sqrt{|\nablaH u|_g^2-\langle \nu_{E},\nablaH u\rangle^2_g}\,
 d \mu_{E}.
\]
Therefore, we have 
\begin{align*}
     \int_\R \int_B \frac{|\nablaH u|_g}{|\nabla u|_g}\,d \mu_E^{s}\,
         ds
& \leq  \liminf_{j\to\infty} \int_\R \int_{B_j} \frac{|\nablaH u|_g}
       {|\nabla u|_g}\,d \mu_{E}^{s}\,ds 
   \\
& \leq \lim_{j\to\infty} \int_{B_j}\,
    \sqrt{|\nablaH u|_g^2-\langle \nu_{E},\nablaH u\rangle^2_g}\,d \mu_{E}
    \\
& = \int_{B}\,\sqrt{|\nablaH u|_g^2-\langle \nu_{E},\nablaH u\rangle^2_g}\,
 d \mu_{E},
\end{align*}
and this concludes the proof.
\end{proof}

In the next step, we  prove an approximate  coarea formula for sets $E$ such that the boundary
$\partial E$
is an $H$-regular surface.

\begin{lemma}\label{lem:approxcoarea}
Let $\Omega\subset\Hn$ be an open set, $u\in C^\infty(\Omega)$
a smooth function,   $E\subset\Hn$  an open set
such that $\partial E\cap\Omega$ is an $H$-regular hypersurface,
and   $\bar p\in\partial E\cap\Omega$ a point  such that
\[
\nablaH u(\bar p)\neq 0\quad\text{and}\quad \nu_E(\bar p)
\neq \pm\frac{\nablaH u(\bar p)}{|\nablaH u(\bar p)|_g}.
\]
Then, for any $\ep>0$ there exists $\bar r=\bar r(\bar p,\ep)>0$ 
such that $B_{\bar r}(\bar p)\subset\Omega$ and, for any $r\in(0,\bar r)$, 
\begin{multline*}
\label{ACF}
  (1-\ep)\int_{B_r(\bar p)}
  \sqrt{|\nablaH u|^2_g-\langle \nu_E,\nablaH u\rangle_g^2}\,d\mu_E
\\
\leq \int_\R \int_{B_r(\bar p)}\frac{|\nablaH u|_g}{|\nabla u|_g}\,
  d\mu_E^{s}
        \,ds
  \\ 
\leq (1+\ep)\int_{B_r(\bar p)}\sqrt{|\nablaH u|^2_g-\langle \nu_E,
  \nablaH u\rangle_g^2}\,d\mu_E.
\end{multline*}
\end{lemma}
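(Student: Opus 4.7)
The upper inequality is essentially free: applying Proposition \ref{prop:disugcoarea} with $h=\chi_{B_r(\bar p)}$ directly gives
\[
\int_\R \int_{B_r(\bar p)}\tfrac{|\nablaH u|_g}{|\nabla u|_g}\,d\mu_E^{s}\,ds \leq \int_{B_r(\bar p)}\sqrt{|\nablaH u|_g^2-\langle \nu_E,\nablaH u\rangle_g^2}\,d\mu_E,
\]
which is stronger than the stated bound with factor $(1+\ep)$. The real content of the lemma is therefore the lower inequality. My plan to obtain it has three stages. First, I exploit continuity. Set $\alpha(p)=|\nablaH u(p)|_g/|\nabla u(p)|_g$ and $\beta(p,\nu)=\sqrt{|\nablaH u(p)|_g^2-\langle \nu,\nablaH u(p)\rangle_g^2}$; by the smoothness of $u$, the continuity of $\nu_E$ that comes from the $H$-regularity of $\partial E$ (formula \eqref{nuesse}), and the two non-degeneracy assumptions at $\bar p$, both $\alpha$ and $\beta(\cdot,\nu_E(\cdot))$ are continuous and strictly positive near $\bar p$. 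Choose $\bar r$ so small that $\nablaH u\neq 0$ on $B_{\bar r}(\bar p)$, and so that both of these quantities oscillate on $B_{\bar r}(\bar p)\cap \partial E$ by a factor at most $(1-\ep)^{-1/2}$ around their values at $\bar p$; additionally require that $\nu_E(p)$ stays uniformly away from $\pm\nablaH u(p)/|\nablaH u(p)|_g$.

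Second, I approximate $E$ locally by smooth sets to bring the smooth coarea identity (Lemma \ref{lem:coarealiscia}) into play. A $C^1_H$ defining function $f$ of $\partial E$ with $\nablaH f(\bar p)\neq 0$ can be mollified in the Riemannian sense to obtain $f_j\in C^\infty$ with $f_j\to f$ and $\nablaH f_j\to \nablaH f$ uniformly on a neighborhood of $\bar p$; for suitable $c_j\to 0$, the smooth sets $E_j=\{f_j<c_j\}$ satisfy $\chi_{E_j}\to\chi_E$ in $L^1_{\mathrm{loc}}$, $\mu_{E_j}\to\mu_E$ weakly$^*$, and $\nu_{E_j}\to\nu_E$ uniformly on $\partial E\cap B_{\bar r}(\bar p)$. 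For each $r\in(0,\bar r)$ with $\mu_E(\partial B_r(\bar p))=0$, Lemma \ref{lem:coarealiscia} applied on $B_r(\bar p)$ gives the equality
\[
\int_\R \int_{B_r(\bar p)}\alpha\,d\mu_{E_j}^{s}\,ds = \int_{B_r(\bar p)}\beta(\cdot,\nu_{E_j})\,d\mu_{E_j},
\]
and the right-hand side converges to $\int_{B_r(\bar p)}\beta(\cdot,\nu_E)\,d\mu_E$ by Reshetnyak continuity, exactly as in Step 1 of the proof of Proposition \ref{prop:disugcoarea}.

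Third, I combine this with Step 1. On $B_r(\bar p)\cap\partial E$ the quantity $\beta(\cdot,\nu_E(\cdot))$ is at least $(1-\ep)^{1/2}$ times its pointwise values at $\bar p$, and similarly $\alpha\geq(1-\ep)^{1/2}\alpha(\bar p)$ uniformly on $B_{\bar r}(\bar p)$; the same two-sided bounds apply on $\partial E_j$ for $j$ large thanks to uniform convergence. These estimates convert the smooth equality, in the limit $j\to\infty$, into a lower bound of the form $(1-\ep)\int_{B_r(\bar p)}\beta\,d\mu_E\leq \liminf_j\int_\R\!\int_{B_r(\bar p)}\alpha\,d\mu_{E_j}^{s}\,ds$. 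The hard part, and the place where all the care of the hypotheses is used, is to replace the liminf in $\mu_{E_j}^s$ with the corresponding integral against $\mu_E^s$; the uniform non-tangency $\nu_E\not\parallel \nablaH u/|\nablaH u|_g$ on $B_{\bar r}(\bar p)$ forces the level sets $\Sigma^s$ to be uniformly transverse to both $\partial E$ and every $\partial E_j$ (for $j$ large), so that near $\bar p$ the slice $E_j\cap\Sigma^s$ is a smooth set in $\Sigma^s$ whose horizontal normal in $H\Sigma^s$ is obtained from $\nu_{E_j}$ and $\nablaH u/|\nablaH u|_g$ by an expression continuous in these two vectors. Uniform convergence of $\nu_{E_j}$ to $\nu_E$ together with transversality then yields, through Lemma \ref{lem:quasiliscio} and a dominated-convergence argument in $s$, the required comparison of slicing measures and closes the lower bound. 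The transversality-to-slice-perimeter step is the main obstacle, since there is no \emph{a priori} reason why $L^1$ convergence of $\chi_{E_j}$ in $\Hn$ should yield control of the slicing perimeters $\mu_{E_j}^s$ on each $\Sigma^s$; it is exactly the hypotheses $\nablaH u(\bar p)\neq 0$ and $\nu_E(\bar p)\neq\pm\nablaH u(\bar p)/|\nablaH u(\bar p)|_g$ that make this step work on the small ball $B_r(\bar p)$.
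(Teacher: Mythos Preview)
Your strategy is genuinely different from the paper's. The paper does not approximate: it builds an explicit change of coordinates $G$ adapted simultaneously to the foliation $\{\Sigma^s\}$ and to $\partial E$ (via a horizontal frame $V_1,\ldots,V_{2n}$ with $V_{2n}=\nablaH u/|\nablaH u|_g$ and a $C^1_H$ defining function $f$ with $V_1 f\geq\delta>0$), obtains push-forward representations of both $\mu_E$ and $\mu_E^s$ as graph measures in these coordinates (formulas \eqref{perE} and \eqref{perEs}), and then reads off \emph{both} inequalities at once from a single Jacobian estimate. Your observation that the upper inequality follows immediately from Proposition~\ref{prop:disugcoarea} is correct and is a legitimate shortcut for that half; the paper's direct computation does not use it.

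For the lower inequality, however, your proposal has a real gap at precisely the place you flag as ``the hard part''. After Reshetnyak you know
\[
\lim_{j\to\infty}\int_\R\!\int_{B_r(\bar p)}\alpha\,d\mu_{E_j}^s\,ds=\int_{B_r(\bar p)}\beta(\cdot,\nu_E)\,d\mu_E,
\]
so what you need is $\int_\R\!\int_{B_r(\bar p)}\alpha\,d\mu_E^s\,ds\geq(1-\ep)$ times this limit. But the only general tool available, lower semicontinuity of perimeter on each slice (from $\chi_{E_j}\to\chi_E$ in $L^1(\Sigma^s)$), gives $\mu_E^s(A)\leq\liminf_j\mu_{E_j}^s(A)$ and hence the \emph{wrong} inequality after Fatou. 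You therefore need an \emph{upper} semicontinuity or an actual convergence $\mu_{E_j}^s(B_r\cap\Sigma^s)\to\mu_E^s(B_r\cap\Sigma^s)$, uniformly enough in $s$ for dominated convergence. Your sketch (``transversality plus uniform convergence of $\nu_{E_j}$ plus Lemma~\ref{lem:quasiliscio}'') does not supply this: Lemma~\ref{lem:quasiliscio} concerns a fixed set with a small singular part, not a converging family, and the boundary $\partial E$ is only $H$-regular, so you have said nothing about the structure of $\partial E\cap\Sigma^s$ inside $\Sigma^s$ or why it carries a perimeter that the smooth slices approach from above. Filling this in requires showing that $f|_{\Sigma^s}$ is a $C^1_H$ defining function in the sub-Riemannian structure of $\Sigma^s$ and deriving a perimeter representation there, which is exactly the content of the paper's Steps~2--3. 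In other words, your route does not bypass the core technical work; it relocates it into a step that you have only asserted.
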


\begin{proof}
We can without loss of generality assume that $\bar p=0$ and $u(0)=0$. 
We divide the proof into several steps. 
\medskip

{\em Step 1: preliminary considerations.} 
The horizontal vector field $V_{2n}=\frac{\nablaH u}{|\nablaH u|_g}$ 
is well defined 
in a neighbourhood $\Omega_\ep\subset \Hn$ of 0.
For any $s\in\R$, 
the hypersurface $\Si^s = \{ p\in \Omega : u(p)=s\}$
is smooth in $\Omega_\ep$  
because $\nablaH u\neq 0$ on $\Omega_\ep$.

There are horizontal vector fields $V_1,\dots,V_{2n-1}$ on $\Omega_\ep$ such
that
$V_1,\dots,V_{2n}$ is a $g$-orthonormal frame. 
In particular, we have $V_j u=0$ for all $j=1,\dots,2n-1$, i.e., 
\begin{equation}
  \label{eq:subCC}
H_p\Si^{s} = \text{span}\{V_1(p),\dots,V_{2n-1}(p)\}
 \quad\text{for all }p\in \Sigma^s \cap \Omega_\ep.
\end{equation}

Possibly  shirinking $\Omega_\ep$, reordering $\{V_j\}_{j=1,\dots,2n-1}$, 
and changing the sign of $V_1$, we can assume 
(see \cite[Lemma 4.3 and Lemma 4.4]{V}) 
that there exist a function $f:\Omega_\ep\to\R$ and a number $\delta>0$ 
such that:
\begin{itemize}
\item[a)]
$f\in C^1_H(\Omega_\ep)\cap C^\infty(\Omega_\ep\setminus\partial E)$;

\item[b)]$E\cap\Omega_\ep = \{p\in\Omega_\ep: f(p) >0 \}$;
 
\item[c)]  $V_1 f\geq \delta >0$  on  $\Omega_\ep$.
\end{itemize}
\noindent
By \cite[Remark 4.7]{V}, we 
have also $\nu_E=\frac{\nablaH f}{|\nablaH f|_g}$ 
on $\partial E \cap \Omega_\epsilon$.

\medskip

{\em Step 2: change of coordinates.} Let $S\subset\Hn$
be a $(2n-1)$-dimensional smooth submanifold such that: 
\begin{itemize}
\item[i)] $0\in S$;
\item[ii)] $S\subset\Si^0\cap\Omega_\ep$; 
  in particular, $\nabla u$ is $g$-orthogonal to $S$;
\item[iii)] $V_1(0)$ is $g$-orthogonal to $S$ at $0$;
\item[iv)] 
there exists a diffeomorphism  
$H:U\to \Hn$, where
$U\subset \R^{2n-1}$ is an open set with  $0\in U$, such that
$H(0)=0$ and $H(U)= S\cap \Omega_\ep$; 
\item[v)] the area element  $JH$  of $H$ 
satisfies $JH(0)=1$. Namely, there holds
\[
 JH(0)=  \lim_{r\to0} 
 \frac{\lambda_S(H(B^E_r))}
{\mathcal L^{2n-1}(B^E_r)}=1,
\]
where $B^E_r = \{p\in\R^{2n-1}:|p|<r\}$ 
is a Euclidean ball   and
$\lambda_S$ is the Riemannian $(2n-1)$-volume measure on $S$ induced by $g$.  
\end{itemize}

\noindent 
For small enough $a,b>0$ and possibly shirinking
$U$ and $\Omega_\ep$, the mapping
$G : (-a,a)\times(-b,b)\times U\to \Hn$
\[ 
 G(v,z,w) = \exp(vV_1)\exp\big(z\tfrac{\nabla u}{|\nabla u|^2_g}\big)(H(w)) 
\]
is a diffeomorphism from  
$\wt\Omega_\ep=(-a,a)\times(-b,b)\times U$ onto $\Omega_\ep$. 
The differential of $G$ satisfies  
\[
  dG\Big(\frac{\partial}{\partial v} \Big)=V_1\quad \text{and}
   \quad 
 dG(0)\Big(\frac{\partial}{\partial  z} \Big)
 =\frac{\nabla u(0)}{|\nabla u(0)|_g^2}.
\]
Moreover, the tangent space $T_0S   =    \text{Im}\,dH(0)$
  is $g$-orthogonal to $V_1(0)$ and 
$\tfrac{\nabla u(0)}{|\nabla u(0)|_g^2}$.
We denote by $G_z$ the restriction of $G$
to
$(-a,a)\times\{z\}\times U$, i.e., $G_z(v,w) = G(v,z,w)$.
From the above considerations, we
deduce that the area elements of $G$ and of $G_0$
satisfy 
\[
JG(0)=\frac{1}{|\nabla u(0)|_g} \quad\text{and}\quad JG_0(0)=1.
\]
Then, possibly shrinking further $\wt\Omega_\ep$, we have 
\begin{align}
(1-\ep)JG(v,z,w)\leq\frac{JG_{z}(v,w)}
 {\big|\nabla u\circ G(v,z,w)\big|_g}\leq (1+\ep)JG(v,z,w),
\label{pennetta0}
\end{align}
for all $(v,z,w)\in\wt \Omega_\ep$.

For $j=1,\dots,2n$, we
define on $\wt \Omega_\ep$
the vector fields $\wt V_j= (d G)^{-1}(V_j)$.
By the definition of $G$, we have 
$\wt V_1={\partial} / {\partial v}$. 
We also define $\wt u=u\circ G\in C^\infty(\wt\Omega_\ep)$,
$\wt f=f\circ G:\wt\Omega_\ep\to\R$, and 
$\wt E=G^{-1}(E)$.
Then:
\begin{itemize}

\item[1)] we have $\wt E= \{ q\in\wt\Omega_\ep:  \wt f(q) >0\}$;

\item[2)] we have $\wt f\in C^\infty(\wt\Omega_\ep\setminus\partial \wt E)$;

\item[3)] the derivative  $\wt V_j\wt f$ is defined in the sense of
distributions
with respect to the measure $\mu = JG \mathcal L^{2n+1}$. 
Namely,  for all $\psi \in C^\infty_c(\wt \Omega_\ep)$
we have
\[
 \int_{\wt \Omega_\ep} (\wt V_j\wt f) \, \psi \, d\mu = - 
 \int_{\wt\Omega_\ep}  \wt f \,  \wt V_j^* \psi \, d\mu, 
\]
where $\wt V_j^*$ is the adjoint operator of $\wt V_j$ with respect to $\mu$.
Then  we have $\wt V_j\wt f=(V_jf)\circ G$  
and so $\wt V_j\wt f$ is a continuous function
for any $j=1,\dots,2n$.
In particular, we have 
$\wt V_1 \wt f=\partial_v\wt f\geq \delta >0$.\end{itemize}

\medskip

{\em Step 3: approximate coarea formula.}
We follow the argument of
\cite[Propositions 4.1 and 4.5]{V},  see also Remark 4.7 therein.

Possibly shrinking $\wt \Omega_\ep$ and $\Omega_\ep$, 
there exists a continuous function $\phi:(-b,b)\times U\to (-a,a)$ 
such that:
\begin{itemize}

\item[A)] $\partial\wt E\cap \wt\Omega_\ep$ 
is the graph of $\phi$. Namely, letting $\Phi: (-b,b)\times U\to\R^{2n+1}$, 
$\Phi(z,w)=(\phi(z,w),z,w)$, we have:
\[
      \partial \wt E\cap\wt\Omega_\ep 
     = \Phi((-b,b)\times U).
\]

\item[B)]
The measure $\mu_E$ is
\begin{equation}
 \label{perE}
     \mu_E\res\Omega_\ep 
   = (G\circ \Phi)_\# 
      \left( \bigg( \frac{|\wt V\wt f|}
  {\wt V_1\tilde f}JG\bigg)\circ \Phi\ \mathcal L^{2n}\res((-b,b)\times
U)\right),
\end{equation}
where $(G\circ\Phi)_\#$ denotes the push-forward and
\[
|\wt V\tilde f| = \Big( \sum_{j=1}^ {2n} \big(\wt V_j \wt f\big)^2 \Big)^{1/2}.
\]
\end{itemize}

Using   $V_1u=0$ and $u\circ H=0$ (this follows from
$H(U)=S\cap\Omega_\ep\subset
\Sigma^0\cap \Omega_\ep$), we obtain 
\[
\begin{split}
\wt u(v,z,w) &= u(G(v,z,w)) 
      =  u\big(\exp(vV_1)\exp\big(z\tfrac{\nabla u}{|\nabla
u|^2_g}\big)(H(w))\big) 
\\
&
= u\big(\exp\big(z\tfrac{\nabla u}{|\nabla u|^2_g}\big)(H(w))\big)
\\
& = z+u(H(w))  = z.
\end{split}
\]
In particular, from $\wt u = u\circ G$ we deduce that
\[
G^{-1}(\Sigma^s\cap\Omega_\ep)= (-a,a)\times\{s\}\times U.
\]
 
We denote by $J G_s$ the Jacobian (area element) of $G_s$.
We also define the restriction $\Phi_s:U\to\R^{2n+1}$, $\Phi_s(w)=\Phi(s,w)$,
for any $s\in (-b,b)$.

By \eqref{eq:subCC}, for any $s\in\R$
the measure $\mu_E^s =  \mu_{E\cap \Sigma_s}^{\Sigma^s}$ 
is the horizontal
perimeter of $E\cap \Si^s$ with respect to the Carnot-Carath\'eodory 
structure
induced by the family $V_1,\dots,V_{2n-1}$ on $\Si^s$.
We can repeat the argument that lead to \eqref{perE} to obtain 
\begin{equation}\label{perEs}
\mu_E^{s}\res\Omega_\ep = (G\circ \Phi_s)_\# 
  \left( \bigg( \frac{|\wt
V'\wt f|}{\wt V_1\wt f}\,JG_s \bigg)\circ
\Phi_s  
 \mathcal L^{2n-1}\res U \right),
\end{equation}
where $\wt V'\wt f=(\wt V_1\wt f,\dots,\wt V_{2n-1}\wt f)$. 
We omit details of the proof of \eqref{perEs}.
The proof is a line-by-line repetition of Proposition 4.5 in
\cite{V} with the unique difference that now the horizontal perimeter
is defined in a curved manifold.

Let us fix $\bar r>0$ such that $B_{\bar r}\subset\Omega_\ep$,
and for any  
$r\in(0,\bar r)$ let   
\[
\begin{split}
& A_{s,r}=\big\{ w\in U :  G(0,s,w) \in B_r\big\},
\\
&   A_r  = \big\{ (s,w) \in (-b,b)\times U : w\in A_{s,r}\big\}.
\end{split}
\]
By Fubini-Tonelli theorem and by 
\eqref{perEs}, the function
\begin{equation}\label{MIS}
   s\mapsto \int_{B_r}
  \frac{|\nablaH u|_g}{|\nabla u|_g}\,d\mu_{E}^{ s}
=
\int_{A_{s,r}}
 \bigg(\frac{|\nablaH u|_g}{|\nabla u|_g}
\circ G 
\bigg)\bigg( \frac{|\wt V'\wt f|}{\wt V_1
\wt f}\,JG_s  \bigg)\circ\Phi_s\, d\mathcal L^{2n-1} 
\end{equation}
is $\mathcal L^1$-measurable. Here and hereafter, 
the composition  $\circ\Phi_s$ acts on the product.
Thus, from  Fubini-Tonelli theorem and \eqref{pennetta0} we obtain 
\begin{equation}\label{SNS22}
\begin{split}
  \int_\R \int_{B_r}
  &     \frac{|\nablaH u|_g}{|\nabla u|_g}\,d\mu_{E}^{ s}\,ds 
   =  
       \int_\R \int_{A_{s,r}}
 \bigg(\frac{|\nablaH u|_g}{|\nabla u|_g}
\circ G 
\bigg)\bigg( \frac{|\wt V'\wt f|}{\wt V_1
\wt f}\,JG_s  \bigg)\circ\Phi_s(w)\, d\mathcal L^{2n-1}(w) 
\,ds
\\
&= 
\int_{A_{r}}
 ( |\nablaH u|_g 
\circ G) 
  \bigg( \frac{|\wt V'\wt f|}{\wt V_1
\wt f}\,
\frac{JG_s }{|\nabla u|_g\circ G}
 \bigg)\circ\Phi(s,w)\, d\mathcal L^{2n}(s,w)
\\
 & \leq  (1+\ep)
 \int_{A_r }
(|\nablaH
u|_g\circ G) \bigg( \frac{|\wt V\wt f|}{\wt V_1\wt f}
\sqrt{1-\tfrac{(\wt V_{2n}\wt f)^2}{|\wt V\wt f|^2}}\, JG\bigg)\circ
\Phi(s,w)\,d\mathcal L^{2n}(s,w).
\end{split}
\end{equation}
From the identity
\begin{equation}
\label{SNS2}
\begin{split}
\frac{\wt V_{2n}\wt f}{|\wt V\wt f|} & 
 = \frac{V_{2n}f}{|\nablaH f|_g}\circ G 
= \Big\langle \frac{\nablaH u}{|\nablaH u|_g}, \frac{\nablaH
f}{|\nablaH f|_g}\Big\rangle_g\circ G
= \Big\langle \frac{\nablaH u}{|\nablaH u|_g}, \nu_E\Big\rangle_g\circ G,
\end{split}
\end{equation}
and from 
\eqref{perE} we deduce that 
\begin{equation}\label{SNS23}
\begin{split}
  \int_\R \int_{B_r}
       \frac{|\nablaH u|_g}{|\nabla u|_g}\,d\mu_{E}^{ s}\,ds 
     & \leq   (1+\ep) \int_{B_r} |\nablaH u|_g\sqrt{1-\langle \tfrac{\nablaH
u}{|\nablaH u|_g}, \nu_E\rangle_g^2}\,d\mu_E
\\
& 
= (1+\ep)\int_{B_r}\sqrt{|\nablaH u|^2_g-\langle \nu_E,\nablaH
u\rangle_g^2}\,d\mu_E.
\end{split}
\end{equation}

In a similar way, we obtain
\[
\int_\R \int_{B_r}\frac{|\nablaH u|_g}{|\nabla u|_g}\,d\mu_{
E}^{s}\,ds 
\geq  (1-\ep)\int_{B_r}\sqrt{|\nablaH u|^2_g-\langle \nu_E,\nablaH
u\rangle_g^2}\,d\mu_E.
\]
This   concludes the proof.
\end{proof}

We can now prove the coarea formula for $H$-regular boundaries.

\begin{proposition}\label{prop:normalidiverse}
Let $\Omega\subset\Hn$ be an open set, $u\in C^\infty(\Omega)$, and
$E\subset\Hn$ be an
open domain such that $\partial E\cap\Omega$ is an $H$-regular hypersurface.
Then
\begin{equation}\label{muchocalor}
\int_\R \int_{\Omega}\frac{|\nablaH u|_g}{|\nabla u|_g}\,d\mu_E^{s}\,ds
= \int_{\Omega}\sqrt{|\nablaH u|_g^2-\langle \nu_E,\nablaH
u\rangle^2_g}\,d\mu_E.
\end{equation}
\end{proposition}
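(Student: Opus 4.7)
The plan is to view both sides of \eqref{muchocalor} as Radon measures on $\Omega$ and prove they coincide. Setting
\[
\lambda(A) = \int_\R \int_A \frac{|\nablaH u|_g}{|\nabla u|_g}\, d\mu_E^s\, ds, \qquad \nu(A) = \int_A \sqrt{|\nablaH u|_g^2-\langle \nu_E,\nablaH u\rangle_g^2}\,d\mu_E
\]
for every Borel $A\subset\Omega$, Proposition \ref{prop:disugcoarea} applied to $h=\chi_A$ already yields $\lambda\leq\nu$, so only the reverse inequality $\lambda\geq\nu$ remains. Introduce the \emph{good set}
\[
G=\Big\{p\in\partial E\cap\Omega:\nablaH u(p)\neq 0 \text{ and } \nu_E(p)\neq \pm\tfrac{\nablaH u(p)}{|\nablaH u(p)|_g}\Big\},
\]
which is relatively open in $\partial E$ because the horizontal normal $\nu_E=\nablaH f/|\nablaH f|_g$ of an $H$-regular surface is continuous. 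Off $G$ the integrand defining $\nu$ is identically zero, so $\nu$ is concentrated on $G$; using $\lambda\leq\nu$, the same is true of $\lambda$.

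To produce the reverse inequality I will combine Lemma \ref{lem:approxcoarea} with a Vitali-type covering. Fix $\ep>0$. At every $p\in G$ the Lemma supplies a radius $\bar r(p,\ep)>0$ with
\[
(1-\ep)\,\nu(B_r(p))\leq \lambda(B_r(p))\qquad\text{for all }0<r<\bar r(p,\ep).
\]
The family $\{B_r(p):p\in G,\,0<r<\bar r(p,\ep)\}$ is thus a fine Kor\`anyi-ball cover of $G$. Extracting a countable disjoint subfamily $\{B_{r_i}(p_i)\}$ that still covers $G$ up to a $\nu$-null set, summing the local bounds, and exploiting disjointness and the concentration of $\lambda$ on $G$, I obtain
\[
\lambda(G)\ \geq\ \sum_i \lambda(B_{r_i}(p_i))\ \geq\ (1-\ep)\sum_i \nu(B_{r_i}(p_i))\ =\ (1-\ep)\,\nu(G).
\]
Letting $\ep\downarrow 0$ gives $\lambda(G)\geq\nu(G)$, and combining with $\lambda\leq\nu$ and concentration on $G$ yields $\lambda=\nu$ on all Borel subsets of $\Omega$, which is \eqref{muchocalor}.

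The principal technical obstacle is the Vitali covering step: one has to know that Kor\`anyi balls have the Vitali property with respect to the restriction of $\mu_E$ to the $H$-regular hypersurface $\partial E\cap\Omega$. This is where the full $H$-regularity hypothesis (and not merely finite $H$-perimeter) becomes essential, since it guarantees that $\mu_E$ is Ahlfors regular, hence doubling, on $\partial E\cap\Omega$ by the results of \cite{FSSCMathAnn}; Vitali's theorem for doubling Radon measures then applies. An equivalent alternative would be to invoke the Lebesgue--Besicovitch differentiation theorem to identify $d\lambda/d\nu=1$ at $\nu$-a.e.\ point of $G$, bypassing the explicit covering construction.
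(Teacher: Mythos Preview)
Your proof is correct and follows essentially the same route as the paper: define the good set $G$ (called $A$ there), apply Lemma \ref{lem:approxcoarea} together with a Vitali covering based on the local doubling property of $\mu_E$ on the $H$-regular hypersurface, and use Proposition \ref{prop:disugcoarea} to handle the complement. The only differences are cosmetic---you phrase the argument as an equality of Radon measures $\lambda=\nu$ while the paper works directly with the integrals, and the paper cites \cite[Corollary 4.13]{V} rather than \cite{FSSCMathAnn} for the doubling property.
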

\begin{proof}
Let us define the set 
\[
A=\Big\{p\in\pa E\cap \Omega :\nablaH u(p)\neq 0\text{ and }\nu_E(p)\neq
\pm\frac{\nablaH u(p)}{|\nablaH u(p)|_g}\Big\}.
\]
The set $A$ is relatively open in $\pa E\cap \Omega$. Let $\ep>0$ be fixed.
Since the measure  $\mu_E$ is
locally doubling on $\partial E\cap \Omega$ (see e.g.~\cite[Corollary 4.13]{V}),
by Lemma \ref{lem:approxcoarea} and Vitali covering Theorem (see
e.g.~\cite[Theorem 1.6]{heinonen}) there exists a
countable (or finite) collection
 of balls $B_{r_i}(p_i)$, $i\in\N$, such that:
\begin{itemize}
\item[i)] for any $i\in\N$ we have $p_i\in A $ and $0<r_i<\bar r(p_i,\ep)$,
where $\bar r$ is as in
the statement of Lemma \ref{lem:approxcoarea};
\item[ii)] the balls $B_{r_i}(p_i)$ are contained in $A$ and pairwise disjoint;
\item[iii)] $\mu_E\big(A\setminus\bigcup_{i\in\N} B_{r_i}(p_i)\big)=0$.
\end{itemize}
It follows that we have:
\begin{equation}
\label{eq:epp1}
\begin{split}
   \int_\R\int_{\bigcup_{i\in\N}  B_{r_i}(p_i) }\frac{|\nablaH u|_g}{|\nabla
u|_g}\,d\mu_E^{s}\,ds 
& \leq  (1+\ep)\int_{\bigcup_{i\in\N} B_{r_i}(p_i)} \sqrt{|\nablaH
u|_g^2-\langle
\nu_E,\nablaH u\rangle_g^2}\,d\mu_E
\\
&= (1+\ep)\int_{A} \sqrt{\nablaH u|^2_g-\langle \nu_E,\nablaH
u\rangle^2_g}\,d\mu_E
\\
&= (1+\ep)\int_{\Omega} \sqrt{|\nablaH u|^2_g-\langle \nu_E,\nablaH
u\rangle^2_g}\,d\mu_E.
\end{split}
\end{equation}
The last equality follows from the fact that $\sqrt{|\nablaH u|_g^2-\langle
\nu_E,\nablaH u\rangle_g^2}=0$ outside $A$. In the same way one also obtains
\begin{equation}
\label{eq:epp2}
\int_\R\int_{\bigcup_{i\in\N}  B_{r_i}(p_i)}\frac{|\nablaH u|_g}{|\nabla
u|_g}\,d\mu_E^{s}\,ds 
\geq (1-\ep)\int_{\Omega} \sqrt{|\nablaH u|_g^2-\langle \nu_E,\nablaH
u\rangle_g^2}\,d\mu_E.
\end{equation}
Moreover, by Proposition \ref{prop:disugcoarea}, there holds
\[
\int_\R \int_{\Omega\setminus\bigcup_{i\in\N}  B_{r_i}(p_i)}
\frac{|\nablaH u|_g}{|\nabla u|_g}\,d\mu_E^{s}\,ds
\leq \int_{\Omega\setminus\bigcup_{i\in\N} B_{r_i}(p_i)}
 \sqrt{|\nablaH u|^2_g-\langle
\nu_E,\nablaH u\rangle_g^2}\,d\mu_E =0.
\]
In particular, the integral on the left hand side of the last inequality is 0
and, by \eqref{eq:epp1} and \eqref{eq:epp2}, we obtain
\begin{multline*}
(1-\ep)\int_{\Omega}\sqrt{|\nablaH u|^2_g-\langle \nu_E,\nablaH
u\rangle_g^2}\,d\mu_E\\
\leq \int_\R \int_{\Omega}\frac{|\nablaH u|_g}{|\nabla
u|_g}\,d\mu_E^{s}\,ds\\ 
\leq (1+\ep)\int_{\Omega}\sqrt{|\nablaH u|^2_g-\langle \nu_E,\nablaH
u\rangle_g^2}\,d\mu_E.
\end{multline*}
Since   $\ep>0$ is arbitrary, this  concludes the proof.
\end{proof}

By a standard approximation argument, we also have the following extension
of the coarea formula \eqref{muchocalor}.

\begin{proposition}\label{prop:maldigola}
Let $\Omega\subset\Hn$ be an open set, $u\in C^\infty(\Omega)$, and $E$ be an
open domain such that $\partial E\cap\Omega$ is an $H$-regular hypersurface.
Then, for any Borel function $h:\pa E\to[0,\infty)$ there holds
\[
\int_\R \int_{\Omega}h\, \frac{|\nablaH u|_g}{|\nabla
u|_g}\,d\mu_E^{s}\,ds
= \int_{\Omega}h\,
  \sqrt{|\nablaH u|^2_g-\langle \nu_E,\nablaH
u\rangle_g^2}\,d\mu_E.
\]
\end{proposition}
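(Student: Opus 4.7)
The plan is to combine the equality from Proposition \ref{prop:normalidiverse} (the case $h\equiv 1$) with the one-sided coarea inequality of Proposition \ref{prop:disugcoarea} (valid for every non-negative Borel $h$), and then to pass from indicator functions to general Borel $h$ by standard measure-theoretic tools. Write
\[
\mathcal L(h)= \int_\R \int_\Omega h\,\frac{|\nablaH u|_g}{|\nabla u|_g}\,d\mu_E^s\,ds,\qquad
\mathcal R(h)= \int_\Omega h\,\sqrt{|\nablaH u|_g^2 - \langle\nu_E,\nablaH u\rangle_g^2}\,d\mu_E,
\]
so that the goal is $\mathcal L(h)=\mathcal R(h)$.

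Since $\mu_E$ is locally finite on the $H$-regular hypersurface $\partial E\cap\Omega$, I first fix an open set $A\subset\subset\Omega$ with $\mu_E(A)<\infty$. Applied to $A$, Proposition \ref{prop:disugcoarea} gives $\mathcal L_A(h)\le \mathcal R_A(h)$ for every non-negative Borel $h$ on $\partial E\cap A$, while Proposition \ref{prop:normalidiverse} (with $A$ in place of $\Omega$) gives $\mathcal L_A(1)=\mathcal R_A(1)<\infty$. For any Borel set $B\subset\partial E\cap A$, applying the inequality to both $\chi_B$ and $\chi_A-\chi_B$ and summing yields
\[
\mathcal R_A(\chi_B)+\mathcal R_A(\chi_A-\chi_B)=\mathcal R_A(1)=\mathcal L_A(1)=\mathcal L_A(\chi_B)+\mathcal L_A(\chi_A-\chi_B),
\]
so the two individual inequalities are forced to be equalities. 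Hence the claimed identity holds for $h=\chi_B$ on $\partial E\cap A$.

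Linearity then propagates the identity to non-negative simple Borel functions on $\partial E\cap A$. For a general Borel $h\ge 0$ I choose simple functions $h_k\nearrow h$ and invoke monotone convergence on the right-hand side, together with Fubini--Tonelli and monotone convergence on the left-hand side, and finally exhaust $\Omega$ by an increasing sequence $A_k\nearrow\Omega$ of open subsets of locally finite $\mu_E$-measure. The only nontrivial measurability point is that the map $s\mapsto \int_\Omega h\,\tfrac{|\nablaH u|_g}{|\nabla u|_g}\,d\mu_E^s$ be $\mathcal L^1$-measurable for every Borel $h$; this is true for indicators of Euclidean balls thanks to the explicit representation \eqref{MIS} established in the proof of Lemma \ref{lem:approxcoarea}, and extends to all Borel sets by a standard monotone class argument. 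Beyond this routine bookkeeping, no substantial obstacle is expected.
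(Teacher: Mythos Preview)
Your proof is correct. The paper does not actually prove this proposition: it merely states that it follows from Proposition~\ref{prop:normalidiverse} ``by a standard approximation argument'' and gives no details. Your approach---combining the one-sided inequality of Proposition~\ref{prop:disugcoarea} with the equality $\mathcal L_A(1)=\mathcal R_A(1)$ from Proposition~\ref{prop:normalidiverse} to force equality on indicator functions, then passing to simple functions and monotone limits---is a clean and entirely valid execution of such an argument.

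A slightly different ``standard'' route, perhaps closer to what the authors had in mind, is to note that $B\mapsto\mathcal R(\chi_B)$ is a Radon measure on $\partial E\cap\Omega$, that $B\mapsto\mathcal L(\chi_B)$ is countably additive on Borel sets by Fubini--Tonelli, and that the two agree on every open $A\subset\Omega$ by applying Proposition~\ref{prop:normalidiverse} with $A$ in place of $\Omega$; outer regularity then gives equality on all Borel sets. Your use of the coarea inequality has the advantage that it bypasses any separate verification of the countable additivity or regularity of $\mathcal L$, at the modest cost of invoking Proposition~\ref{prop:disugcoarea} (whose proof is independent and already available). Either way, the remaining steps are routine, as you indicate.
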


Our next step is to prove the coarea formula for $\mathcal S^{2n+1}$-rectifiable
sets. 

\begin{lemma}\label{lem:rettif-Borel}
Let $R\subset\Hn$ be an  $\mathcal S^{2n+1}$-rectifiable
set.   Then,  
there exists a Borel  $\mathcal S^{2n+1}$-rectifiable  set $R'\subset\Hn$ such
that $\SQmu(R\Delta R')=0$.
\end{lemma}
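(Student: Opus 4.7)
My plan is to realize $R$ up to a $\SQmu$-null set as a subset of a Borel rectifiable envelope $\tilde S:=\bigcup_{j\in\N} S_j$, and then trim $\tilde S$ to a Borel subset of $R$ using Borel regularity of $\SQmu$. The definition of rectifiability furnishes a sequence of $H$-regular hypersurfaces $(S_j)_{j\in\N}$ with $\SQmu(R\setminus \tilde S)=0$. The first step is to verify that each $S_j$ is Borel: around every $p\in S_j$ the set $S_j\cap B_r(p)=\{f=0\}$ is closed in $B_r(p)$ since the defining function $f\in C^1_H(B_r(p))$ is continuous, so by second countability of $\H^n$ this exhibits $S_j$ as a countable union of locally closed sets, hence a Borel set. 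Consequently $\tilde S$ is Borel and $\SQmu$-rectifiable.

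The next key observation is that $\SQmu\res \tilde S$ is $\sigma$-finite. Locally around any point of $S_j$, the condition $\nablaH f\neq 0$ identifies $S_j\cap B_r(p)$ with the $H$-reduced boundary of the locally $H$-Caccioppoli set $\{f>0\}\cap B_r(p)$; therefore \eqref{missa} shows that $\SQmu\res S_j$ agrees locally with the $H$-perimeter measure of $\{f>0\}$, which is a locally finite Radon measure. Summing over $j$ and over a countable exhaustion of $\H^n$ by relatively compact open sets gives the desired $\sigma$-finiteness.

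Finally I would invoke Borel regularity of $\SQmu$, which follows from its construction via Carathéodory's method. Combined with the $\sigma$-finiteness of $\SQmu\res \tilde S$, it produces the following standard inner-regularity statement: for every $\SQmu$-measurable set $A\subseteq \tilde S$ there exists a Borel set $R'\subseteq A$ with $\SQmu(A\setminus R')=0$. One obtains it by taking a Borel outer envelope $B\supseteq A$ with $B\subseteq \tilde S$ and $\SQmu(B)=\SQmu(A)$ (using Borel regularity on each piece of a $\sigma$-finite decomposition), then a Borel null set $N\supseteq B\setminus A$, and setting $R':=B\setminus N$. Applied to $A:=R\cap \tilde S$---which is $\SQmu$-measurable because $\tilde S$ is Borel and, under the implicit convention that $R$ itself be $\SQmu$-measurable, so is the intersection---this yields a Borel set $R'\subseteq \tilde S$ with
\[
\SQmu(R\Delta R')\leq \SQmu(R\setminus \tilde S)+\SQmu((R\cap \tilde S)\setminus R')=0,
\]
and $R'$ is automatically $\SQmu$-rectifiable as a subset of $\tilde S$. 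The delicate step in my plan is the $\sigma$-finiteness argument, which relies crucially on the Heisenberg-specific identification \eqref{missa} between $\SQmu$ and the intrinsic $H$-perimeter on $H$-regular hypersurfaces; the remaining ingredients are standard outer-measure theory.
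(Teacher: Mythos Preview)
Your proof is correct and takes a genuinely different route from the paper's. The paper invokes the explicit parametrization of each $H$-regular hypersurface $S_j$ by a homeomorphism $\Phi_j:U_j\subset\R^{2n}\to S_j$ satisfying $\SQmu\res S_j=\Phi_{j\#}(\rho_j\,\leb^{2n}\res U_j)$ (from \cite{FSSCMathAnn,ASCV}), pulls $R\cap S_j$ back to $U_j$, finds a Borel representative $T_j$ there using the completeness and Borel regularity of Lebesgue measure, and sets $R'=\bigcup_j\Phi_j(T_j)$. You instead work intrinsically in $\H^n$: each $S_j$ is Borel as a countable union of locally closed sets, $\SQmu\res\tilde S$ is $\sigma$-finite via the identification \eqref{missa} with a locally finite $H$-perimeter measure, and then the standard inner-approximation argument for Borel-regular outer measures on $\sigma$-finite pieces yields $R'$. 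Your approach avoids the parametrization theorem at the cost of appealing to \eqref{missa}; the paper's route is more hands-on and constructs $R'$ explicitly. Both arguments share the same implicit measurability assumption on $R$ that you correctly flag---indeed the conclusion $\SQmu(R\Delta R')=0$ with $R'$ Borel already forces $R$ to be $\SQmu$-measurable, so this is unavoidable.
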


\begin{proof}
By assumption, there exist  a $\SQmu$-negligible set $N$ and $H$-regular
hypersurfaces $S_j\subset\Hn$, $j\in\N$,  such that
\[
   R\subset N\cup\bigcup_{j=1}^\infty S_j.
\]
It is proved in \cite{FSSCMathAnn,ASCV} that (up to a localization argument),
for any $j\in\N$, there exist  an open set $U_j\subset\R^{2n}$,  
an omeomorphism $\Phi_j:U_j\to S_j$, and 
a continuous function $\rho_j:U_j\to[1,\infty)$
such that $\SQmu\res S_j=\Phi_{j\#}(\rho_j\:\leb^{2n}\res U_j)$. 
Since the
Lebesgue measure $\leb^{2n}$ is a complete  Borel measure,
for any $j\in
\N$ there exists a Borel set $T_j\subset U_j$ such that
\[
\leb^{2n}(T_j \Delta \Phi_j^{-1}(R\cap S_j))=0.
\]
In particular, the Borel set
\[
R'=\bigcup_{j=1}^\infty \Phi_j(T_j)
\]
is $\SQmu$-equivalent to $R$.
\end{proof}

 \medskip

\begin{proof}[Proof of Theorem \ref{teo:rettificabili22}]
{\em Step 1.}  
We prove \eqref{muchofrio22} when  
$R$ is an $H$-regular hypersurface.
Then, $R$ is
locally the boundary of an open set 
$E\subset\Hn$ with $H$-regular boundary. Moreover, we have
(locally) $\mu_E=\SQmu\res R$ and $\nu_E=\nu_R$, up to the sign.  

We define the measures   $\mu_R^{s}=\mu_E^{s}$ for any $s$ such that $\nabla u\neq 0$ on $\Si^s$.
The measurability of the function in \eqref{filippo}
follows 
from the argument \eqref{MIS}. Formula \eqref{muchofrio22}
follows 
from  Proposition
\ref{prop:maldigola}.

\medskip

{\em Step 2.}  
We prove \eqref{muchofrio22} when  $R$ is an $\mathcal S^{2n+1}$-rectifiable Borel set. There exist
a
$\SQmu$-negligible set $N$ and $H$-regular hypersurfaces $S_j\subset\Hn$, 
$j\in\N$ such that
\[
                 R\subset N\cup\bigcup_{j=1}^\infty S_j.
\]
Each $S_j$ is (locally) the boundary of an open set $E_j$ 
with $H$-regular boundary. We denote by $\mu_{E_j}^s$ the perimeter
measure on $\partial E_j\cap \Sigma^s$ induced by $E_j$.

We define the pairwise disjoint Borel sets $R_j=(R\cap
S_j)\setminus\cup_{h=1}^{j-1}S_h$ and we let
\[
\mu_R^{s}=\sum_{j=1}^\infty \mu_{E_j}^{s}\res R_j.
\]
The definition is well posed for any $s$ such that $\nabla u\neq 0$ on
$\Si^s$. We have $\nu_R=\pm \nu_{E_j}$ $\SQmu$-a.e.~on 
$R_j$ and the sign of $\nu_R$ does not affect formula \eqref{muchofrio22}.
From the {\em Step 1}, for each $j\in\N$ the function
\[
  s\mapsto \int_{R_j} 
h\,\frac{|\nablaH u|_g}
 {|\nabla u|_g}\,d\mu_{E_j}^{s}
\]
is $\mathcal L^1$-measurable; here, we were allowed to utilize {\em Step 1} because $\chi_{R_j}$ is Borel regular. Thus also the function
\[
s\mapsto \int_{\Omega} 
h\,\frac{|\nablaH u|_g}
 {|\nabla u|_g}\,d\mu_{R}^{s} = \sum_{j=1}^\infty \int_{R_j} 
h\,\frac{|\nablaH u|_g}
 {|\nabla u|_g}\,d\mu_{E_j}^{s}
\]
is measurable.
  Moreover, we have
\[
\begin{split}
\int_\R \int_{\Omega}h\,\frac{|\nablaH u|_g}
 {|\nabla u|_g}\,d\mu_R^{s}\, ds
&   =  
      \sum_{j=1}^\infty
     \int_\R \int_{R_j} 
h\,\frac{|\nablaH u|_g}
 {|\nabla u|_g}\,d\mu_{E_j}^{s}\, ds
\\
&
=\sum_{j=1}^\infty
\int_{R_j}h\,\sqrt{|\nablaH u|_g^2-\langle \nu_R,\nablaH
u\rangle_g^2}\,d\SQmu
\\
&=
\int_{R}h\,\sqrt{|\nablaH u|_g^2-\langle \nu_R,\nablaH
u\rangle_g^2}\,d\SQmu.
\end{split}
\]

{\em Step 3.} Finally, if $R$ is $\mathcal S^{2n+1}$-rectifiable but not Borel, we set
$\mu_R^{s}=\mu_{R'}^{s}$, where $R'$ is a Borel set as in Lemma
\ref{lem:rettif-Borel}. 
Again, this definition is well posed for a.e.~$s\in\R$.
This concludes the proof.
\end{proof}

\subsection{Proof of Theorem \ref{teo:coarealiscia2}}
In this subsection we assume $n\geq 2$.

\begin{lemma}
   \label{lem:trascurabile}
For  $n\geq 2$, let $\Omega\subset\Hn$ be an open set, 
$u\in C^\infty(\Omega)$ a smooth function, 
$R\subset\Omega$ an $\mathcal S^{2n+1}$-rectifiable set. 
Then 
\[
      \SQmu(\{p\in R:\nablaH u(p)=0\text{ and }\nabla u(p)\neq 0\})=0.
\]
\end{lemma}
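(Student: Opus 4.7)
The plan is to first reduce, via $\SQmu$-rectifiability of $R$, to the case where $R$ is a single $H$-regular hypersurface $S$, and then to parameterize $S$ as an intrinsic graph in order to push the question to a Euclidean parameter space. By countable subadditivity, it suffices to show $\SQmu(S\cap Z)=0$ for each such $S$, where $Z=\{p\in\Omega: \nablaH u(p)=0,\ \nabla u(p)\neq 0\}$. Near a fixed $p_0\in S$, the implicit function theorem of \cite{FSSCMathAnn} realizes $S$ as an intrinsic graph $\Phi_\phi(w)=\wt\Phi(w,\phi(w))$, where $\wt\Phi\colon U\times(-a,a)\to V$ is a smooth diffeomorphism onto an open $V\subset\Hn$ (obtained by exponentiating $X_1$ from a transverse hyperplane), $\phi\in C(U)$, $U\subset\R^{2n}$ open, and $\SQmu\res S$ is equivalent to the push-forward of $\leb^{2n}\res U$ under $\Phi_\phi$ with a continuous positive density.

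It then suffices to show that $A=\{w\in U: \Phi_\phi(w)\in Z\}$ is $\leb^{2n}$-negligible. Setting $\wt Z=\wt\Phi^{-1}(Z)\subset U\times(-a,a)$, one has $A\subset\pi_U(\wt Z)$, where $\pi_U$ denotes projection onto $U$. Working directly in $\Hn$, set $\Psi=\nablaH u\colon\Omega\to\R^{2n}$, so that $Z=\Psi^{-1}(0)\cap\{\nabla u\neq 0\}$. On $Z$, the condition $\nabla u\neq 0$ combined with $\nablaH u=0$ forces $Tu\neq 0$, and the commutator identity $[X_i,Y_j]=-4\delta_{ij}T$ implies that the skew-symmetric part of the horizontal Hessian of $u$ at such a point equals $2\,Tu\cdot J_{2n}$, where $J_{2n}$ is the standard symplectic form on $\R^{2n}$, and is therefore non-degenerate. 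Stratifying $\Psi^{-1}(0)$ by the rank of $d\Psi$ and combining this symplectic non-degeneracy with the Morse-Sard theorem, I expect to bound the Euclidean Hausdorff dimension of $Z$ (and hence of $\wt Z$) by $1$. Since $n\geq 2$ gives $2n\geq 4>1$, the projection $\pi_U(\wt Z)\subset\R^{2n}$ has Euclidean Hausdorff dimension at most $1$ and is therefore $\leb^{2n}$-negligible, which yields $\leb^{2n}(A)=0$.

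The main obstacle is the Euclidean dimension bound on $Z$. The full-rank stratum $\{p\in Z: \operatorname{rank} d\Psi(p)=2n\}$ is a $C^\infty$ $1$-dimensional submanifold by the implicit function theorem and projects to a $\leb^{2n}$-null set without difficulty. The delicate point is treating the lower-rank strata: there one combines the non-degeneracy of the skew-symmetric part of the horizontal Hessian (forced by $Tu\neq 0$) with an iterated Morse-Sard-type reduction, discarding a codimension-one piece at each stage. The hypothesis $n\geq 2$ enters essentially at this point, since $2n\geq 4$ provides enough horizontal directions to keep the total dimension of $Z$ comfortably below the ambient dimension $2n+1$; for $n=1$ one has only two horizontal equations in three ambient dimensions, the set $Z$ can genuinely carry positive $\SQmu$-measure on rectifiable sets, and the conclusion of the lemma fails, consistently with the failure of Theorem \ref{teo:coarealiscia2} in $\H^1$ recorded in Remark \ref{rem:notrascH1}.
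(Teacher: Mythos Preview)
Your key step---bounding the Euclidean Hausdorff dimension of $Z=\{\nablaH u=0,\ \nabla u\neq 0\}$ by $1$---is false as stated and is never actually proved in the proposal (you write ``I expect to''). For $u(z,t)=t-2\sum_{j=1}^n x_jy_j$ one computes $X_iu\equiv 0$, $Y_iu=-4x_i$, $Tu=1$, so $Z=\{x=0\}$ has Euclidean dimension $n+1$, not $1$. This also shows that your use of $n\geq 2$ via ``$2n\geq 4>1$'' cannot be the right mechanism: if $\dim Z\leq 1$ held, your argument would apply verbatim when $n=1$ (where $2>1$), contradicting Remark~\ref{rem:notrascH1}. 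Morse--Sard controls critical \emph{values}, not the dimension of a specific fibre, so the ``iterated Morse--Sard-type reduction'' you invoke does not supply the missing bound either.

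Your route can be repaired with the correct estimate $\dim_{\mathrm{Eucl}}Z\leq n+1$. At $p\in Z$ the horizontal Hessian $M=(V_iV_ju(p))_{i,j}$ has invertible skew part $-2\,Tu(p)\,J$; since $\mathrm{rank}(M-M^T)\leq 2\,\mathrm{rank}(M)$, one gets $\mathrm{rank}(M)\geq n$, hence $n$ of the functions $V_ju$ have independent differentials at $p$ and $Z$ is locally contained in a smooth $(n{+}1)$-manifold. For $n\geq 2$ this gives $n+1<2n$, and then your projection argument does conclude. The paper's proof is quite different and more intrinsic: at each $p\in R\cap Z$ it uses $n\geq 2$ to choose left-invariant horizontal $V,W$ with $\langle V(p),\nu_R(p)\rangle_g=\langle W(p),\nu_R(p)\rangle_g=0$ and $[V,W]=T$; from $Tu(p)\neq 0$ one gets, say, $VWu(p)\neq 0$, so $S=\{Wu=0\}$ is an $H$-regular hypersurface with $\nu_S(p)$ independent of $\nu_R(p)$, and $R\cap Z\subset R\cap S$ is locally a $2$-codimensional $H$-regular surface of Carnot--Carath\'eodory dimension at most $2n$ by \cite{FSSC-adv}, whence $\SQmu(R\cap Z)=0$.
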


\begin{proof}
It is enough to prove the lemma when $R$ is an
$H$-regular hypersurface. Let
\[
   A=\big\{p\in R:\nablaH u(p)=0\text{ and }\nabla u(p)\neq 0\big\}.
\]
We claim that $\SQmu(A)=0$.

Let $p\in A$ be a fixed point 
and let $\nu_R(p)$ be the horizontal normal to $R$ at $p$.
Since $n\geq 2$, we have 
\[
\dim \{V(p)\in H_p:\langle V(p),\nu_R(p)\rangle_g=0\}=2n-1\geq n+1.
\]
Thus there exist left invariant horizontal vector fields
$V,W$ such that
\[
    \langle V(p),\nu_R(p)\rangle_g=\langle
W(p),\nu_R(p)\rangle_g=0\quad\text{and}\quad [V,W]=T.
\]

From $\nablaH u(p)=0$ and $\nabla u(p)\neq 0$ we deduce that $Tu(p)\neq 0$.
It follows that  
\[
      VWu(p)-WVu(p)=Tu(p)\neq 0,
\]
and, in particular, we have either $VWu(p)\neq0$ or $WVu(p)\neq0$.
Without loss of
generality, we assume that $VWu(p)\neq0$. Then the
set
 $S=\{q\in\Omega:Wu(q)=0\}$
is an $H$-regular hypersurface near the point $p\in S$.
Since we have
\[
   \langle V(p), \nu_R(p)\rangle_g =0\quad\text{and}
\quad
   \langle V(p),\nu_S(p)\rangle_g = \frac{VWu(p)}{|\nablaH Wu(p)|_g} \neq 0,
\]
we deduce that  $\nu_R(p)$ and $\nu_S(p)$ are linearly independent.
Then there exists $r>0$ such that 
the set 
$R\cap S\cap B_r(p)$ is a 2-codimensional $H$-regular surface (see
\cite{FSSC-adv}). Therefore, by \cite[Corollary 4.4]{FSSC-adv}, 
the Hausdorff
dimension in the Carnot-Carath\'eodory metric 
of $A\cap B_r(p)\subset R\cap S\cap B_r(p)$ is not greater than $2n$.
This is enough to conclude.
\end{proof}

\begin{remark}\label{rem:notrascH1}
Lemma \ref{lem:trascurabile} is not valid in the case $n=1$.
Consider the smooth surface $R=\{(x,y,t)\in \H^1:x=0\}$ and the
function $u(x,y,t)=t-2xy$. We have 
\[
\nabla u = -4x Y +T\quad\textrm{and}\quad
 \nablaH u = -4x Y.
\]
Then we have
\[
\big\{p\in R:\nablaH u(p)=0\text{ and }\nabla u(p)\neq 0\big\} = R
\]
and $\mathcal S^3 (R) = \infty$.

\end{remark}

If  $n\geq 2$ and
$\Omega$, $u$, and $R$ are as in Lemma \ref{lem:trascurabile}, 
then the function 
\[
|\nabla u|_g\sqrt{1-\left\langle \nu_E,\tfrac{\nablaH u}{|\nablaH
u|_g}\right\rangle_g^2}
\]
is defined $\SQmu$-almost everywhere on $R$.
We agree that  its value is 0
when $|\nabla u|_g=0$.  Notice that, in this case, $\tfrac{\nablaH
u}{|\nablaH u|_g}$ is not defined.

\begin{proof}[Proof of Theorem \ref{teo:coarealiscia2}]
Let $\ep>0$ be fixed. Then \eqref{eq:coarealiscia2a} can be 
obtained by plugging the function  $\tfrac{|\nabla u|_g}
{\ep+|\nablaH u|_g}\,h$ into
\eqref{muchofrio22}, letting $\ep\to0$ and using the monotone
convergence theorem.
\end{proof}

\section{Height estimate} \label{tre}

In this section, we prove Theorem \ref{teoaltezza}. 
We discuss first a relative isoperimetric inequality on slices.
Then we list some elementary properties of 
excess, and finally we proceed with
the proof.

We assume throughout this section that $n\geq2$.

\subsection{Relative isoperimetric inequalities}
\label{trebis}

For each $s\in\R$, we define the level sets of the height function
\[
        \H^n_s=  \big\{p \in\Hn : \h(p)=s\big\}.
\]
Let $H^s$ be the $g$-orthogonal projection
of $H$ onto the tangent space of $\H^n_s$.
Since 
the vector field $X_1$ is orthogonal to $\H^n_s$, while the vector fields
$X_2,\ldots,X_n,Y_1,\ldots,Y_n$ are tangent to $\H^n_s$, then
at any point $p\in\H^n_s$
we have
\[
H_p^s =\mathrm{span} 
 \big\{ X_2(p) ,\ldots,X_n(p),Y_1^s(p),Y_2(p),\ldots,Y_n(p)\big\},
\]
where $X_2,Y_2,\ldots,X_n,Y_n$ are as in \eqref{XY} and
\[
       Y_1^s = \frac{\partial}{\partial y_1}-2s \frac{\partial}{\partial t}.
\]
The natural volume in $\H^n_s$ is the Lebesgue measure $\leb^{2n}$.
%
For any measurable set $F\subset \H^n_s$ and 
any open set $\Omega\subset\H^n_s$,
we define
\[
       \mu_F^s(\Omega) = \sup \Big\{ \int_F \mathrm{div}_{g}^s \varphi \, 
d\mathcal L^{2n}:
 \varphi \in C^1_c(\Omega;H^s),\, \|\varphi\|_g\leq 1\Big\},
\]
where $\mathrm{div}_{g}^s \varphi = X_2\varphi_2+\ldots+X_n\varphi_n
+Y_1^s\varphi_{n+1} +\ldots +Y_n\varphi_{2n}$.
If $\mu_F^s(\Omega)<\infty$ then $\mu_F^s$ is a Radon measure in $\Omega$.

By Theorem \ref{teo:coarealiscia2}, 
for any Borel function $h:\Hn\to[0,\infty)$ and any set $E$
with locally finite $H$-perimeter in $\Hn$, we have the following coarea
formula
\begin{equation}\label{eqcoarea}
\int_\R \int_{\mathbb H^n_{s}} h\,d \mu_{E^s}^s\, ds =  
\int_{\mathbb H^n }
   h\sqrt{1-\langle \nu_E,X_1\rangle_g^2  } \,d\mu_{E },
\end{equation}
where $E^s  = E\cap \H^n_s$ is the section of $E$
with $\H^n_s$.
Notice that $\nablaH \h = X_1$.

In the proof  of   Theorem \ref{teoaltezza}, we  
need a relative isoperimetric
inequality in each slice $\mathbb H^n_s$ for $s\in (-1,1)$.
These slices are cosets of $\W = \mathbb H^n_0$
and the isoperimetric inequalities in $\H^n_s$ can be reduced to an
isoperimetric inequality in the central slice $\W=\H^n_0$
relative to a family of varying domains.

For any $s\in (-1,1)$ let $\Omega_s\subset \W$ be the set 
$\Omega_s = (-s\ee_1)\ast D_1\ast (s\ee_1)$. 
This is the left translation by $-s\ee_1$
of the section  $C_1\cap \H^n_s$. See the introduction for the definition of $D_1$ and $C_1$.
With the coordinates $(y_1,\wh
z,t) \in \W = \R\times \C^{n-1}\times\R $, we have
\[
  \Omega_s = \big\{ (y_1,\wh z,t)\in \W : (y_1^2+|\wh
z|^2)^2+(t-4sy_1)^2<1\big\}.
\]
The sets $\Omega_s\subset\W$ are open and convex in the standard sense.
The boundary $\partial \Omega_s$ is a $(2n-1)$-dimensional
$C^\infty$ embedded surface with the following property.
There are $4n$ open convex sets $U_1,\ldots,U_{4n}\subset \W$ such that
$\partial \Omega_s\subset \bigcup _{i=1}^{4n} U_i$ and for each
$i$ the portion of boundary $\partial \Omega_s\cap U_i$ is a graph
of the form $p_j = f_i^s(\widehat p_j)$ with $j=2,\ldots, 2n+1$
and $\widehat p_j = (p_2,\ldots,p_{j-1},p_{j+1},\ldots,p_{2n+1}) \in V_i$,
where $V_i\subset\R^{2n-1}$ is an open convex 
set and $f_i^s\in C^\infty(V_i)$ is a function
such that 
\begin{equation}
\label{kappa}
 |\nabla f_i^s(\widehat p_j) - \nabla f_i^s(\widehat q_j)| \leq K
    |\widehat p_j-\widehat q_j|   \quad \textrm{for all }
\widehat p_j,\widehat q_j\in V_i,
\end{equation}
where $K>0$ is a constant independent of $i=1,\ldots,4n$ and
independent of $s\in (-1,1)$.
In other words, the boundary $\partial \Omega_s$ is of class $C^{1,1}$
uniformly in $s\in (-1,1)$.

By Theorem 3.2 in \cite{MM}, 
the domain $\Omega_s\subset\W$ is a non-tangentially accessible
(NTA) domain in the metric space $(\W,d_{CC})$ where $d_{CC}$ is
the Carnot-Carath\'eodory metric induced by the horizontal distribution $H^0_p$.
In particular, $\Omega_s$ is a (weak) John domain in the sense of  \cite{HK}.
Namely, there exist a point 
$p_0\in \Omega_s$, e.g.~$p_0=0$, 
and a constant $C_J>0$ such that for any point $p\in\Omega_s$ there exists
a continuous curve $\gamma:[0,1]\to \Omega_s$ 
such that $\gamma(1)=p_0$, $\gamma(0)=p$, and
\begin{equation}
     \mathrm{dist}_{CC}(\gamma(\sigma),
\partial\Omega_s)\geq C_J d_{CC}(\gamma(\sigma),p),\quad
\sigma\in[0,1].
\end{equation}
By Theorem 3.2 in \cite{MM}, the John constant $C_J$
depends only on the constant $K>0$ in \eqref{kappa}. 
This claim is not stated explicitly in Theorem 3.2 of \cite{MM}
but it is evident from the proof. 
In particular, the John constant $C_J$ is independent of $s\in (-1,1)$.
Then, by 
Theorem 1.22 in \cite{GN} we have the following result.

\begin{theorem} 
\label{pippo}
Let $n\geq 2$. There exists a constant $C(n)>0$
such that for any $s\in (-1,1)$ and for any 
 measurable set $F\subset \W$ there holds
\begin{equation}
 \label{ISOP1}
 \min \{   \leb^{2n}(F\cap \Omega_s), 
\leb^{2n}(\Omega_s\setminus F)\big\} ^
{\frac{2n}{2n+1}} \leq C(n)
\, \frac{ \mathrm{diam}_{CC}(\Omega_s) }{
\leb^{2n}(\Omega_s)^{\frac{1}{2n+1}}} \mu_F^0(\Omega_s).
\end{equation}
\end{theorem}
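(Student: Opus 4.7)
The plan is to read off the inequality directly from Theorem 1.22 in \cite{GN}, once the family $\{\Omega_s\}_{s\in(-1,1)}$ is shown to satisfy John's condition with a constant independent of $s$. Everything needed has essentially been assembled in the discussion preceding the statement; what remains is to package it correctly.

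First, I would verify the uniform $C^{1,1}$ character of $\partial\Omega_s$: the sets $\Omega_s=(-s\mathrm e_1)\ast D_1\ast (s\mathrm e_1)$ are Euclidean convex and the defining inequality $(y_1^2+|\widehat z|^2)^2+(t-4sy_1)^2<1$ is smooth in $(y_1,\widehat z,t,s)$ with gradient bounded away from $0$ on $\partial\Omega_s$ uniformly as $s$ varies in the compact interval $[-1+\eta,1-\eta]$. A covering of $\partial\Omega_s$ by the $4n$ graph patches $p_j=f_i^s(\widehat p_j)$ described in the excerpt then yields the Lipschitz bound \eqref{kappa} on $\nabla f_i^s$ with a constant $K$ independent of $s\in(-1,1)$.

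Next, I would invoke Theorem 3.2 of \cite{MM}, applied in the Carnot group $(\mathbb W,d_{CC})$ of step $2$ associated to the horizontal distribution $H^0$. That theorem asserts that a $C^{1,1}$ domain in such a group is NTA, hence in particular a John domain, and a careful reading of its proof shows that the John constant $C_J$ depends only on the $C^{1,1}$ bound $K$ of the boundary. Since $K$ is independent of $s$, we obtain a single $C_J=C_J(n)$ that works for every $\Omega_s$, with distinguished John point $p_0=0\in\Omega_s$.

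Finally, I would apply Theorem 1.22 of \cite{GN}, the relative isoperimetric inequality in John domains of a Carnot group, to $F\cap\Omega_s$ inside $\Omega_s$ with respect to the horizontal perimeter $\mu^0_F$ induced by $H^0$. This yields \eqref{ISOP1} with a constant depending only on $n$ and on $C_J$, hence only on $n$. The main (and really only) obstacle is the uniformity claim: one must be sure that the John constant produced by \cite{MM} depends \emph{only} on the geometric parameter $K$ and not on further quantitative features of $\Omega_s$, so I would spend some care verifying this by inspection of the proof of Theorem 3.2 in \cite{MM}, as the excerpt itself suggests.
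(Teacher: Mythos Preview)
Your proposal is correct and follows essentially the same approach as the paper: establish the uniform $C^{1,1}$ bound \eqref{kappa} on $\partial\Omega_s$, invoke Theorem~3.2 of \cite{MM} to obtain the John property with a constant depending only on $K$ (hence independent of $s$), and then apply Theorem~1.22 of \cite{GN}. The only minor wrinkle is your passing restriction to $s\in[-1+\eta,1-\eta]$ before concluding uniformity on all of $(-1,1)$; since the defining function $(y_1^2+|\widehat z|^2)^2+(t-4sy_1)^2-1$ is smooth in $(y_1,\widehat z,t,s)$ up to $s=\pm1$ with nonvanishing gradient on the boundary, the bound extends to the full open interval without further work.
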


\noindent An alternative proof
of Theorem \ref{pippo} can be obtained 
using the Sobolev-Poincar\'e inequalities
proved in  \cite{HK}  in the general setting
of metric spaces.

The diameter 
$\mathrm{diam}_{CC}(\Omega_s)$ is bounded for $s\in(-1,1)$
and  $\leb^{2n}(\Omega_s)>0$ is a constant independent of $s$.
Then we obtain the following version of \eqref{ISOP1}.

\begin{corollary}\label{teoisoprelW} Let $n\geq 2$.
For any $\tau \in(0,1)$  there exists a constant 
$C (n,\tau)>0$ 
such that for $s\in(-1,1)$ and for
any measurable set $F\subset \W$ satisfying 
\[
\leb^{2n}(F\cap \Omega_s)\leq \tau  \,\leb^{2n}(\Omega_s) 
\]
there holds
\[
 \mu_{F}^0(\Omega_s) \geq C (n,\tau ) 
   \leb^{2n}(F\cap \Omega_s)^{\tfrac{2n}{2n+1}}.
\]
\end{corollary}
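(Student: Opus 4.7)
The plan is to reduce the corollary directly to Theorem \ref{pippo} by a routine dichotomy argument, using the constraint $\leb^{2n}(F\cap\Omega_s)\leq \tau\leb^{2n}(\Omega_s)$ with $\tau<1$ to dispose of the $\min$ on the left-hand side of \eqref{ISOP1}.

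First I would argue that, under the standing hypothesis, the minimum is comparable to $\leb^{2n}(F\cap\Omega_s)$ with a constant depending only on $\tau$. Indeed, either the minimum equals $\leb^{2n}(F\cap\Omega_s)$ (and there is nothing to do), or the minimum is $\leb^{2n}(\Omega_s\setminus F)$; in the latter case, the assumption $\leb^{2n}(F\cap\Omega_s)\leq \tau\leb^{2n}(\Omega_s)$ forces $\leb^{2n}(\Omega_s\setminus F)\geq (1-\tau)\leb^{2n}(\Omega_s)\geq (1-\tau)\leb^{2n}(F\cap\Omega_s)$. Thus in both cases
\[
\min\{\leb^{2n}(F\cap \Omega_s),\leb^{2n}(\Omega_s\setminus F)\}\geq (1-\tau)\,\leb^{2n}(F\cap \Omega_s).
\]

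Raising this to the power $2n/(2n+1)$ and inserting it into the isoperimetric inequality \eqref{ISOP1} of Theorem \ref{pippo} yields
\[
(1-\tau)^{\frac{2n}{2n+1}}\leb^{2n}(F\cap\Omega_s)^{\frac{2n}{2n+1}}\leq C(n)\,\frac{\mathrm{diam}_{CC}(\Omega_s)}{\leb^{2n}(\Omega_s)^{\frac{1}{2n+1}}}\,\mu_F^0(\Omega_s).
\]
At this point it only remains to absorb the geometric factor on the right-hand side into the constant. I would invoke the two observations already stated in the text preceding the corollary: $\leb^{2n}(\Omega_s)$ is a positive constant independent of $s\in(-1,1)$ (as $\Omega_s$ is obtained by a measure-preserving left-translation of the fixed section $C_1\cap\H^n_s$ onto $\W$), and $\mathrm{diam}_{CC}(\Omega_s)$ is uniformly bounded for $s\in(-1,1)$. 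Dividing through gives the claim with $C(n,\tau)=(1-\tau)^{2n/(2n+1)}\,c(n)$ for a suitable $c(n)>0$.

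The argument is really just bookkeeping on top of Theorem \ref{pippo}; the only step that is not completely mechanical is verifying the uniform boundedness of $\mathrm{diam}_{CC}(\Omega_s)$ and the $s$-independence of $\leb^{2n}(\Omega_s)$, but both are already pointed out in the paper, so no new input is required.
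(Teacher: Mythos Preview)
Your argument is correct and matches the paper's approach: the paper simply records that $\mathrm{diam}_{CC}(\Omega_s)$ is uniformly bounded and $\leb^{2n}(\Omega_s)$ is a positive constant independent of $s$, then states the corollary without further detail. Your dichotomy step to eliminate the $\min$ is the standard completion of the argument that the paper leaves implicit.
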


\subsection{Elementary properties of excess}
We list here, without proof, the most basic properties of the cylindrical excess introduced in Definition \ref{def:cilexc}. Their proofs are easy adaptations of those for the classical excess, 
see e.g.~\cite[Chapter 22]{maggi}. Note that, except for property 3), they hold also in the case $n=1$.

1) For all $0<r<s$ we have
\begin{equation}\label{eqeccraggi}
   \ex(E,r,\nu)\leq \left(\frac s r\right)^{2n+1}\ex(E,s,\nu).
\end{equation}

\medskip

2) If $(E_j)_{j\in\N}$ is a sequence of sets with locally finite $H$-perimeter
such that $E_j\to E$ as $j\to\infty $ in $L^1_{\mathrm{loc}}(\H^n)$, then we have
for any $r>0$
\begin{equation}\label{SCI-Exc}
\ex (E,r,\nu)\leq \liminf_{j\to\infty} \ex (E_j ,r,\nu).
\end{equation}

\medskip

3) Let $n\geq 2$. If $E\subset\H^n$ is a set such that
$\ex(E,r,\nu)=0$ and $0\in \hfr E$, then
\begin{equation}
   E\cap C_r =\big \{ p\in C_r : \h (p)<0\big\}.
\label{eccessozero}
\end{equation}
In particular, we have $\nu_E= \nu $ in $C_r\cap\partial E$. See also \cite[Proposition 3.6]{M1}.

\medskip

4) For any $\lambda>0$  and $r>0$ we have
\begin{equation}
  \label{exc-riscal}
          \ex(\lambda E,\lambda r,\nu) = \ex(E,r,\nu),
\end{equation} 
where $\lambda E = \{ (\lambda z,\lambda^2 t)\in\H^n: (z,t)\in E\}$.

\medskip

\medskip

\subsection{Proof of Theorem \ref{teoaltezza}} 
\label{pollo}

The following result is a first suboptimal version
of Theorem \ref{teoaltezza}.

\begin{lemma}\label{lemposition}
Let $n\geq 2$. For any $s\in(0,1)$, 
$\Lambda\in[0,\infty)$, and $r\in\:(0,\infty]$
with $\Lambda r\leq 1$, there exists a constant
$\omega(n,s,\Lambda,r)>0$ 
such that
if $E\subset \H^n$ is a $\Lrz$-minimum of $H$-perimeter in the cylinder
 $C_{2}$,
$0\in\partial E$, and $\ex(E,2,\nu)\leq \omega(n,s,\Lambda,r)$, then
\[
\begin{split}
& |\h(p)|<s\text{ for any }p\in\partial E\cap C_1,\\
& \Ln\big( \{p\in E\cap C_1 : \h(p)>s\}\big) =0,\\
& \Ln\big( \{p\in C_1\setminus E : \h(p)<-s\}\big) =0.
\end{split}
\]
\end{lemma}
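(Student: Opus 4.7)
The plan is to argue by contradiction and compactness. Suppose the conclusion fails; then there exist $s\in(0,1)$, $\Lambda\in[0,\infty)$, and $r\in(0,\infty]$ with $\Lambda r\leq 1$, together with a sequence $(E_j)_{j\in\N}$ of $\Lrz$-minima of $H$-perimeter in $C_2$, with $0\in\partial E_j$ and $\ex(E_j,2,\nu)\to 0$, for which at least one of the three listed conclusions fails for every $j$. After passing to a subsequence I can assume the same conclusion fails for all $j$.

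The first step will be to extract an $L^1_\mathrm{loc}(C_2)$-limit $E$ of (a subsequence of) $(E_j)$, and to use the compactness and stability properties of $\Lrz$-minima collected in Appendix A to conclude that: $E$ is itself a $\Lrz$-minimum in $C_2$; the topological boundaries $\partial E_j$ converge locally in Hausdorff distance to $\partial E$ on $C_2$; and $0\in\partial E$, the boundary point being preserved in the limit thanks to uniform volume-density estimates. Combining $\ex(E_j,2,\nu)\to 0$ with the lower semicontinuity \eqref{SCI-Exc} then yields $\ex(E,2,\nu)=0$. Because $n\geq 2$, the rigidity statement \eqref{eccessozero} applies and forces
\[
E\cap C_2 = \{p\in C_2:\h(p)<0\}.
\]

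Each of the three conclusions is now contradicted for $j$ large. The local Hausdorff convergence $\partial E_j\to \partial E\cap C_2 = C_2\cap\{\h=0\}$ immediately gives $\partial E_j\cap \overline{C_1}\subset\{|\h|<s\}$ for $j$ large, excluding failure of the first conclusion. For the remaining two, the open sets $C_1\cap\{\h>s\}$ and $C_1\cap\{\h<-s\}$ are connected and, for $j$ large, disjoint from $\partial E_j$; after the standard modification ensuring that $\partial E_j$ is the topological boundary of $E_j$, the density dichotomy for sets of locally finite $H$-perimeter forces each of these connected open sets to be, up to $\Ln$-null sets, either contained in $E_j$ or disjoint from $E_j$. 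The $L^1_\mathrm{loc}$ convergence to $E=\{\h<0\}\cap C_2$ selects ``disjoint from $E_j$'' for $C_1\cap\{\h>s\}$ and ``contained in $E_j$'' for $C_1\cap\{\h<-s\}$, against the persistent failure of the second and third conclusions.

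The main technical obstacle is the first step: one must guarantee that $\Lrz$-minimality passes to the limit, that $\partial E_j\to \partial E$ locally in Hausdorff distance, and crucially that $0$ survives in $\partial E$. All three items rest on uniform volume density estimates for $\Lrz$-minima that are collected (without proof) in Appendix A; once these are granted, the rigidity \eqref{eccessozero} (the only place in this argument where $n\geq 2$ is used) does the geometric work, and the three conclusions follow by the elementary connectedness argument above. The constant $\omega(n,s,\Lambda,r)$ is then produced non-constructively as the largest admissible excess threshold compatible with the three conclusions.
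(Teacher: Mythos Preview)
Your proposal is correct and follows essentially the same contradiction-plus-compactness route as the paper: extract a limit via Theorem~\ref{teocompattezza}, use lower semicontinuity of excess and the rigidity \eqref{eccessozero} (which requires $n\geq 2$) to identify the limit as a halfspace, then derive the three conclusions from Kuratowski convergence of boundaries together with a connectedness argument and $L^1$ convergence. The only cosmetic difference is that the paper passes to intermediate cylinders $C_{5/3}$ and $C_{4/3}$ before invoking \eqref{SCI-Exc} and \eqref{eccessozero}, whereas you apply them directly on $C_2$; since convergence is only $L^1_{\mathrm{loc}}(C_2)$, in a full write-up you would likewise want to work on a slightly smaller cylinder (any radius in $(1,2)$ suffices).
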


\begin{proof}
By contradiction, assume that there exist $s\in(0,1)$ and
 a sequence of sets $(E_j)_{j\in\N}$  that are 
 $\Lrz$-minima in $C_{2}$ and such such that 
\[
\lim_{j\to\infty} \ex(E_j,2,\nu)=0
\]
and at least one of the following facts holds:
\begin{align}
\text{either } & \quad\text{there exists } 
p\in\partial E_j\cap C_1\text{ such
that }s\leq |\h(p)|\leq 1,\label{possibilitaA}\\
\text{or }& \quad\Ln\big( \{p\in E_j\cap C_1 : \h(p)>s\}\big) >
0,\label{possibilitaB}\\
\text{or }& \quad\Ln\big( \{p\in C_1\setminus E _j:
\h(p)<-s\}\big)>0.\label{possibilitaC}
\end{align}
By Theorem  \ref{teocompattezza}   in the Appendix A, 
there exists a measurable set $F\subset C_{5/3}$ such that
$F$ is a $\Lrz$-minimum in $C_{5/3}$, 
$0\in\partial F$ and (possibly up to
subsequences) $E_j\cap C_{5/3}\to F$ in $L^1(C_{5/3})$. 
By \eqref{SCI-Exc} and
\eqref{eqeccraggi}, we obtain
\[
\ex(F,4/3,\nu)\leq \liminf_{j\to\infty} \ex(E_j,4/3,\nu)
\leq \left(\tfrac 3
2\right)^{2n+1} \lim_{j\to\infty} \ex(E_j,2,\nu)=0.
\]
Since $0\in\partial F$, by \eqref{eccessozero} 
the set $F\cap C_{4/3}$ is (equivalent to) a halfspace with
horizontal inner normal $\nu=-X_1$, and, namely,
\[
       F\cap C_{4/3} = \{p\in C_{4/3}:\h(p)<0\}.
\]
Assume that \eqref{possibilitaA} holds for infinitely many $j$. 
Then, up to a
subsequence, there are   points $(p_j)_{j\in\N}$ and $p_0$ such that
\[
        p_j\in \partial E_j\cap C_1,\quad 
   |\h(p_j)|\in\: (s,1] \quad\text{and}\quad
     p_j\to p_0\in\partial F\cap \bar C_1.
\]
We  used again Theorem  \ref{teocompattezza} in the Appendix A. 
This is
a contradiction
because $\partial F\cap \bar C_1 
= \{p\in \bar C_1:\h(p)=0\}$. 
Here, we used $n\geq 2$.
Therefore, there exists $j_0\in\N$ such that
\[
       \{p\in\partial E_j\cap C_1:s\leq |\h(p)|\leq 1\}
   =\emptyset\quad\text{for all
}j\geq j_0,
\]
and hence
\[
    \mu_{E_j} (C_1\setminus\{p\in\Hn:|\h(p)|\leq s\})=0.
\]
This implies that, for $j\geq j_0$, $\chi_{E_j}$ is constant on the two
connected components  $C_1\cap \{p:\h(p) > s\}$ 
and $C_1\cap \{p:\h(p)< -
s\}$.
Since  the sequence $(E_j)_{j\in\N}$ converges in $L^1(C_1)$ 
to the halfspace $F$, then
for any $j\geq j_0$ we have 
\[
\begin{split}
& \chi_{E_j}=0\quad \Ln\text{-a.e. on }C_1\cap \{p:\h(p)> s\},
    \quad \textrm{and}\\
& \chi_{E_j}=1\quad \Ln\text{-a.e. on }C_1\cap \{p:\h(p)< - s\}.
\end{split}
\]
This contradicts both \eqref{possibilitaB} and
\eqref{possibilitaC} and concludes the proof.
\end{proof}

Let $\pi:\H^n\to\W$ be the group 
projection defined, for any $p\in\Hn$, by the formula
\[
            p = \pi(p)\ast (\h(p)\ee_1).
\] 
For any set $E\subset\H^n$  and 
for any $s\in\R$, we let $E^s = E \cap \H^n_s$ and 
we define the projection
\[
  E_s = \pi (E^s) = \big\{ w\in \W :  w\ast (s \ee_1) \in E\big\}.
\]
%

\begin{lemma}\label{lemmisuraeccesso}
Let $n\geq 2$,
let $E\subset\Hn$ be a set with locally finite $H$-perimeter 
and $0\in\partial
E$, and let $s_0\in(0,1)$ be such that
\begin{align}
& |\h(p)|<s_0\text{ for any }p\in\partial E\cap C_1,\label{sottot0}\\
& \Ln\big( \{p\in E\cap C_1 : \h(p)>s_0\}\big) =0,\label{sopranulla}\\
& \Ln\big( \{p\in C_1\setminus E : \h(p)<-s_0\}\big) =0.\label{sottotutto}
\end{align}
Then, for a.e.~$s\in(-1,1)$
 and for any continuous function
$\varphi\in C_c(D_1)$ we have, with   $M=\hfr E\cap C_1$
and $M_s = M\cap \{ \h>s\}$,
\begin{equation}
\int_{E_s\cap D_1}\varphi\,d\leb^{2n} 
 = -\int_{M_s}\varphi\circ\pi 
\, \langle \nu_E, X_1\rangle_g \,d\SQmu.
\label{444}
\end{equation}
In particular,
for
any Borel set $G\subset D_1$,
we have
\begin{align}
\leb^{2n}(G) = &-\int_{M\cap \pi^{-1}(G)}
\langle \nu_E, X_1\rangle_g \:d\SQmu, 
\label{222}
\\
\leb^{2n}(G)\leq &\:\SQmu(M\cap \pi^{-1}(G))\label{111}.
\end{align}
\end{lemma}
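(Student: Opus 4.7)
The plan is to derive (\ref{444}) from Gauss--Green integration by parts applied to the horizontal vector field $\Phi=\rho(\h)(\varphi\circ\pi)X_1$, with $\rho$ a smooth cutoff approximating $\chi_{(s,+\infty)}$; then (\ref{222}) and (\ref{111}) will follow easily. The decisive algebraic fact is that the projection $\pi$ is invariant under the flow of $X_1$: since by associativity and the group law $(\tau\ee_1)\ast(-(\h(q)+\tau)\ee_1)=-\h(q)\ee_1$, one has $\pi(q\ast(\tau\ee_1))=\pi(q)$ for all $\tau\in\R$, and therefore
\[
X_1(\varphi\circ\pi) \equiv 0 \quad \text{on } \Hn.
\]
Combined with $X_1\h=1$, this gives the clean identity $\divg\Phi = \rho'(\h)(\varphi\circ\pi)$.

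First I would prove (\ref{444}) for $\varphi\in C_c^1(D_1)$. Fix $\bar s\in(s_0,1)$ and, for small $\epsilon,\delta>0$, choose $\rho_\epsilon\in C^1_c((-1,1))$ equal to $0$ outside $(s,\bar s+\delta)$, equal to $1$ on $[s+\epsilon,\bar s]$, and monotone on the two transition intervals. Since $\spt\varphi\subset\subset D_1$ and $[s,\bar s+\delta]\subset\subset(-1,1)$, the field $\Phi_\epsilon=\rho_\epsilon(\h)(\varphi\circ\pi)X_1$ lies in $C^1_c(C_1;H)$, and Gauss--Green reads
\[
\int_{C_1}\rho_\epsilon(\h)(\varphi\circ\pi)\,\langle X_1,\nu_E\rangle_g\,d\mu_E \;=\; -\int_E \rho_\epsilon'(\h)(\varphi\circ\pi)\,d\Ln.
\]
As $\epsilon\to 0$, the negative bump of $\rho_\epsilon'$, supported in $[\bar s,\bar s+\delta]\subset\{\h>s_0\}$, gives no contribution because $E\cap C_1$ has vanishing $\Ln$-measure there by (\ref{sopranulla}). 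The positive bump, supported in $[s,s+\epsilon]$, is a probability density concentrating at $s$, so by Euclidean Fubini on $\Hn\simeq\R^{2n+1}$ the right hand side converges to $-\int_{E\cap\Hn_s}(\varphi\circ\pi)\,d\mathcal L^{2n}$ at every Lebesgue point of $\sigma\mapsto\int_{E\cap\Hn_\sigma}(\varphi\circ\pi)d\mathcal L^{2n}$, hence for a.e.~$s$. A direct Jacobian check shows that the diffeomorphism $w\mapsto w\ast(s\ee_1)$ from $\W$ to $\Hn_s$ has unit determinant and $(\varphi\circ\pi)(w\ast(s\ee_1))=\varphi(w)$, so this limit equals $-\int_{E_s\cap D_1}\varphi\,d\mathcal L^{2n}$. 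On the left, $\mu_E=\SQmu\res\hfr E$ with $\hfr E\cap C_1\subset\{|\h|<s_0\}$ by (\ref{sottot0}), so $\rho_\epsilon(\h)\to\chi_{\{\h>s\}}$ $\mu_E$-a.e.~whenever $\mu_E(\{\h=s\})=0$ (true for a.e.~$s$), and bounded convergence yields the limit $\int_{M_s}(\varphi\circ\pi)\langle X_1,\nu_E\rangle_g\,d\SQmu$. Equating and rearranging produces (\ref{444}) for $\varphi\in C^1_c(D_1)$; the extension to $\varphi\in C_c(D_1)$ is standard via uniform approximation, using that $\mathcal L^{2n}(D_1)$ and $\SQmu(M)=\mu_E(C_1)$ are finite.

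For (\ref{222}) I would pick any $s\in(-1,-s_0)$ satisfying (\ref{444}): by (\ref{sottot0}) one has $M_s=M$ and by (\ref{sottotutto}) $E_s\cap D_1=D_1$ modulo $\mathcal L^{2n}$-null sets, so (\ref{444}) collapses to
\[
\int_{D_1}\varphi\,d\mathcal L^{2n}=-\int_M(\varphi\circ\pi)\langle\nu_E,X_1\rangle_g\,d\SQmu.
\]
Approximating $\chi_G$ by continuous $[0,1]$-valued functions (regularity of $\mathcal L^{2n}$, finiteness of $\SQmu(M)$) promotes this identity to any Borel $G\subset D_1$. Finally, (\ref{111}) is immediate from (\ref{222}) via Cauchy--Schwarz $|\langle\nu_E,X_1\rangle_g|\le|\nu_E|_g|X_1|_g=1$.

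The main technical hurdle is the simultaneous control of the two ends of the cutoff $\rho_\epsilon$: the negative end must be absorbed into the region $\{\h>s_0\}$ where hypothesis (\ref{sopranulla}) annihilates its contribution, while the positive end must concentrate at a single Euclidean slice $\{\h=s\}$ to yield the slice integral on the left hand side of (\ref{444}). Hypotheses (\ref{sottot0})--(\ref{sottotutto}) are exactly what is needed for both limits to exist and for $\Phi_\epsilon$ to be compactly supported in $C_1$; the invariance $X_1(\varphi\circ\pi)=0$ is what makes the divergence formula collapse so cleanly that only these two boundary terms appear.
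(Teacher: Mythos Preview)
Your proof is correct and shares with the paper the decisive observation that $X_1(\varphi\circ\pi)=0$, so that the horizontal field $(\varphi\circ\pi)X_1$ is divergence-free. The technical route, however, is genuinely different. The paper performs a \emph{hard} truncation: it introduces the slab-cut set $F=E\cap(D_r\ast(s,\sigma))$, applies the divergence theorem to $Z=(\varphi\circ\pi)X_1$ on $F$, and then has to identify the $X_1$-component of the perimeter measure of $F$. This last step (the claim labeled ``eqqclaim'') is carried out via one-dimensional $BV$ theory on the fibers $\varrho\mapsto w\ast(\varrho\ee_1)$, using traces of $\chi_{E_w}$ at the endpoints of $(s,\sigma)$; the a.e.\ set of admissible $s$ is characterized explicitly by a trace identity. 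You instead perform a \emph{soft} truncation: Gauss--Green is applied directly on $E$ with the compactly supported test field $\rho_\epsilon(\h)(\varphi\circ\pi)X_1$, and the passage to the slice is obtained in the limit by Lebesgue differentiation on the positive bump of $\rho_\epsilon'$, while the negative bump is annihilated by hypothesis~(\ref{sopranulla}). Your approach is a bit more elementary in that it sidesteps the $BV$ trace machinery; the paper's approach gives a cleaner description of the exceptional set of $s$ (independent of $\varphi$), though you can recover this by a separability argument on $C_c(D_1)$. The derivations of (\ref{222}) and (\ref{111}) are essentially identical in both proofs.
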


\begin{proof}
It is enough to prove \eqref{444}. 
Indeed, taking   $s<-s_0$ in \eqref{444} and recalling \eqref{sottot0} 
and \eqref{sottotutto}, we obtain 
\begin{equation}
   \int_{D_1}\varphi\:d\leb^{2n} =
-\int_M
  \varphi\circ\pi
\,\langle \nu_E, X_1\rangle_g  \:d\SQmu.
\label{333}
\end{equation}
Formula \eqref{222} follows from \eqref{333} 
by considering smooth
approximations of $\chi_G$. Formula \eqref{111} is immediate from \eqref{222}
and $|\langle \nu_E, X_1\rangle_g |\leq 1$.

We prove \eqref{444} for a.e.~$s\in(-1,1)$ and, namely, for those
$s$ satisfying the property \eqref{eqqfame2} below. 
Up to an approximation argument, we may assume that $\varphi\in
C^1_c(D_1)$. Let $r\in(0,1)$ and  $\s\in(\max\{s_0,s\},1)$ be
fixed.
We 
define 
\[
F=E
\cap (D_r\ast (s,\s))=E\cap \big\{ w\ast (\varrho \ee_1) \in \H^n
:w\in D_r,\, \varrho\in (s,\s)\big\}
. 
\]
We claim that
for a.e.~$r\in(0,1)$ and any $s$ satisfying  \eqref{eqqfame2} we have
\begin{equation}
\label{eqqclaim}
  \langle \nu_F, X_1\rangle_g 
  \mu_F = \langle \nu_E, X_1\rangle_g 
 \SQmu\res \hfr E\cap(D_r\ast(s,\s)) 
  + \leb^{2n} \res       E\cap D_r^ s .
\end{equation}
Above, we let $D_r^s  = \{ w\ast (s\ee_1) \in\H^n: w\in D_r\}$.
We postpone the proof of \eqref{eqqclaim}.
Let $Z$ be a   horizontal vector field of the form 
$Z = (\varphi\circ  \pi) X_1$.
We have $\divg Z=0$ because $X_1( \varphi\circ \pi)=0$.
Hence, we obtain 
\[
0=\int_F \divg\, Z\,d\Ln 
  = -\int_{\Hn}
\varphi\circ\pi \,\langle \nu_F, X_1\rangle_g  d\mu_F,
\]
i.e., by Fubini-Tonelli theorem and by \eqref{eqqclaim},
\[
-\int_{E_s\cap D_r} \varphi\,d\leb ^{2n}  
   =
-\int_{E\cap D_r^ s} \varphi\circ\pi\,d\leb ^{2n}  
   = \int_{\hfr
E\cap(D_r\ast(s,\s))} \varphi\circ\pi\,\langle \nu_E, X_1\rangle_g \,d\SQmu.
\]
Formula  \eqref{444} follows on letting first $r\nearrow 1$ and then 
$\s\nearrow 1$.

We are left with the proof of \eqref{eqqclaim}. 
Let $\psi\in C^1_c(\Hn)$ be a test function.
For any $w\in \W$ we let
\[
     E_w = \{\varrho \in\R:w \ast (\varrho \ee_1) \in E\},\quad 
   \psi_w(\varrho)=\psi(w\ast (\varrho \ee_1) ).
\]
Then we have $\psi_w\in C^1_c(\R)$ and, by Fubini-Tonelli theorem,
\begin{equation}\label{eqqfame1}
\begin{split}
       -\int_F X_1\psi&  \,d\Ln = -\int_{D_r}\int_s^\s 
      \chi_E(w\ast(\varrho \ee_1) )X_1\psi(w\ast (\varrho\ee_1))\,
        d\varrho \,d\leb^{2n}(w)\\
& = -\int_{D_r} \int_s^\s \chi_{E_w}(\varrho) \psi_w'(\varrho) \, d\varrho
  \,d\leb^{2n}(w)\\
& = \int_{D_r} \left[\int_s^\s \psi_w\,dD\chi_{E_w} -\psi_w(\s)\chi_{E_w}(\s^-)
+ \psi_w(s)\chi_{E_w}(s^+)\right]\,d\leb^{2n}(w),
\end{split}
\end{equation}
where $D\chi_{E_w}$ is
the derivative of $\chi_{E_w}$ in the sense of
distributions and $\chi_{E_w}(\s^-),\chi_{E_w}(s^+)$ 
are the classical trace
values of $\chi_{E_w}$ at the endpoints of the interval 
$(s,\s)$. We  used
the fact that the function $\chi_{E_w}$ 
is of bounded variation for
$\leb^{2n}$-a.e.~$w\in\W$, which in turn is a 
consequence of the fact that $X_1 \chi_E$ is
a signed Radon measure. For any such $w$, 
the trace of $\chi_{E_w}$ satisfies
\[
\chi_{E_w}(s^+) = \chi_{E_w}(s) = \chi_E(w\ast (s \ee_1))
 \quad \text{for a.e.~}s,
\]
so that, by Fubini's Theorem, for a.e.~$s\in\R$ there holds
\begin{equation}\label{eqqfame2}
\chi_{E_w}(s^+) = \chi_E(w\ast (s\ee_1))\quad 
\text{for }\leb^{2n}\text{-a.e.~}w\in D_1.
\end{equation}
With a similar argument, 
using \eqref{sopranulla} and the fact that $\s>s_0$ one
can see that
\begin{equation}\label{eqqfame3}
\chi_{E_w}(\s^-) = \chi_E(w\ast (\s \ee_1))=0\quad 
 \text{for }\leb^{2n}\text{-a.e. }w\in D_1.
\end{equation}
We refer the reader to \cite{AFP} for an 
extensive account on $BV$ functions and
traces. 
By \eqref{eqqfame1}, \eqref{eqqfame2} and \eqref{eqqfame3} 
we obtain
\[
\begin{split}
-\int_F X_1\psi\,d\Ln 
& = \int_{D_r} \int_s^\s \psi_w\,dD\chi_{E_w}\,d\leb^{2n}(w) +
\int_{D_r}\psi_w(s)\chi_{E_w}(s)\,d\leb^{2n}(w)\\
& = \int_{D_r\ast (s,\s)} \psi\, \langle \nu_E, X_1\rangle_g  d\mu_E 
 + \int_{E\cap D_r^s }
\psi\,d\leb^{2n}\\
& = \int_{\hfr E\cap(D_r\ast(s,\s))} \psi\,\langle \nu_E, X_1\rangle_g 
\,d\SQmu +
\int_{E\cap D_r^ s } \psi\,d\leb^{2n},
\end{split}
\]
and \eqref{eqqclaim} follows.
\end{proof}


\begin{corollary}\label{corperisoper}
Under the same assumptions and  notation
of Lemma \ref{lemmisuraeccesso}, 
for a.e.~$s\in(-1,1)$ there holds
\begin{equation}\label{eqwolenski}
0\leq \SQmu (M_s)-\leb^{2n} (E_s\cap D_1) \leq \ex(E,1,\nu).
\end{equation}
Moreover, we have 
\begin{equation}\label{excmeasexc}
\SQmu(M) -\leb^{2n}(D_1)=\ex(E,1,\nu).
\end{equation}
\end{corollary}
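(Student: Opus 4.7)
The plan is to reduce both statements to a direct computation by first reinterpreting the cylindrical excess through the identities supplied by Lemma \ref{lemmisuraeccesso}. The key observation is that, since $\nu=-X_1$, $|\nu_E|_g=1$ $\mu_E$-a.e., and $\mu_E=\SQmu\res\hfr E$, a trivial expansion gives
\[
\tfrac{1}{2}|\nu_E-\nu|_g^{2}=1+\langle\nu_E,X_1\rangle_g\quad\mu_E\text{-a.e.},
\]
so taking $r=1$ in Definition \ref{def:cilexc} yields
\[
\ex(E,1,\nu)=\int_{M}\bigl(1+\langle\nu_E,X_1\rangle_g\bigr)\,d\SQmu
=\SQmu(M)+\int_{M}\langle\nu_E,X_1\rangle_g\,d\SQmu.
\]
Note the integrand $1+\langle\nu_E,X_1\rangle_g$ is non-negative because $|\nu_E|_g=|X_1|_g=1$ and Cauchy--Schwarz.

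To prove \eqref{excmeasexc}, I apply \eqref{222} with $G=D_1$. Since $M\subset C_1=D_1\ast(-1,1)$ and $\pi(C_1)\subset D_1$, every point of $M$ already projects into $D_1$, so $M\cap\pi^{-1}(D_1)=M$. Therefore \eqref{222} reads
\[
\leb^{2n}(D_1)=-\int_M\langle\nu_E,X_1\rangle_g\,d\SQmu,
\]
and substituting this into the formula for $\ex(E,1,\nu)$ above gives exactly $\SQmu(M)-\leb^{2n}(D_1)=\ex(E,1,\nu)$.

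For \eqref{eqwolenski}, I fix an $s\in(-1,1)$ satisfying \eqref{444} and approximate $\chi_{D_1}$ from below by a monotone sequence $(\varphi_k)\subset C_c(D_1)$ with $\varphi_k\nearrow\chi_{D_1}$. Applying \eqref{444} with $\varphi=\varphi_k$ and passing to the limit via monotone convergence (using $|\langle\nu_E,X_1\rangle_g|\leq 1$ to dominate on the right) gives
\[
\leb^{2n}(E_s\cap D_1)=-\int_{M_s}\langle\nu_E,X_1\rangle_g\,d\SQmu,
\]
so that
\[
\SQmu(M_s)-\leb^{2n}(E_s\cap D_1)=\int_{M_s}\bigl(1+\langle\nu_E,X_1\rangle_g\bigr)\,d\SQmu.
\]
The left inequality in \eqref{eqwolenski} follows from non-negativity of the integrand, and the right inequality follows because $M_s\subset M$, so the integral is bounded by the corresponding integral over all of $M$, which equals $\ex(E,1,\nu)$ by the opening computation.

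There is no genuine obstacle: the argument is purely algebraic once Lemma \ref{lemmisuraeccesso} is in hand. The only small technicality worth being careful about is justifying the substitution $\varphi=\chi_{D_1}$ in \eqref{444} (needed to obtain $\leb^{2n}(E_s\cap D_1)$), which requires the monotone approximation just described together with the uniform bound on $\langle\nu_E,X_1\rangle_g$ to interchange limit and integral. Everything else reduces to the identity $\tfrac12|\nu_E-\nu|_g^{2}=1+\langle\nu_E,X_1\rangle_g$ and the observation that $1+\langle\nu_E,X_1\rangle_g\geq 0$.
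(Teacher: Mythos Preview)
Your proof is correct and follows essentially the same approach as the paper: both rely on the identity $\tfrac12|\nu_E-\nu|_g^2=1+\langle\nu_E,X_1\rangle_g$, the approximation of $\chi_{D_1}$ in \eqref{444}, and the resulting formula $\SQmu(M_s)-\leb^{2n}(E_s\cap D_1)=\int_{M_s}(1+\langle\nu_E,X_1\rangle_g)\,d\SQmu$. The only cosmetic difference is that the paper derives \eqref{excmeasexc} by specializing \eqref{eqwolenski} to some $s<-s_0$ (where $M_s=M$ and $E_s\cap D_1=D_1$), whereas you obtain it directly from \eqref{222} with $G=D_1$; these are the same computation in disguise.
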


\begin{proof}
On approximating $\chi_{D_1}$ with functions $\varphi\in C_c(D_1)$, by
\eqref{444} we get
\[
\leb^{2n} (E_s\cap D_1) = - \int_{M_s} \langle  \nu_E, X_1\rangle_g\,
d\SQmu,
\]
and  the first inequality in \eqref{eqwolenski} follows. 
The second inequality 
follows from
\begin{equation}\label{eqferrante}
\begin{split}
 \SQmu (M_s)-\leb^{2n} (E_s\cap D_1)
& =  \int_{M_s} \big(1+\langle \nu_E,X_1\rangle_g \big)\,d\SQmu
 \\ 
& =  \int_{M_s} \frac{|\nu_E-\nu |_g^2}{2}\,d\SQmu 
 \\
&  \leq  
\ex(E,1,\nu).
\end{split}
\end{equation}
Notice that $\nu=-X_1$.
Finally, \eqref{excmeasexc} follows on choosing a suitable $s<-s_0$ and
recalling \eqref{sottot0} and \eqref{sottotutto}. In this case, 
 the
inequality in \eqref{eqferrante} becomes an 
equality and the proof is concluded.
\end{proof}


\begin{proof}[Proof of Theorem \ref{teoaltezza}]
{\em Step 1.} Up to replacing $E$ with the rescaled 
set 
$\lambda E = \{(\lambda z,\lambda^2 t)\in\H^n: (z,t)\in E\}$ with 
$\lambda = 1/{2k^2r}$
and recalling \eqref{exc-riscal}, 
we can without loss of generality assume
that $E$ is a $(\Lambda',\tfrac1{2k^2})$-minimum of $H$-perimeter in $C_2$
with
\begin{equation}\label{eqriscalati}
\frac{\Lambda'}{2k^2}\leq 1,\qquad 0\in\partial E,\qquad
\ex(E,2,\nu)\leq\e_0(n).
\end{equation}
Our goal is to find $\e_0(n)$ and $c_1(n)>0$ such that, if \eqref{eqriscalati} holds, then
\begin{equation}\label{eqast}
\sup\left\{ |\h(p)|: p\in \partial E\cap C_{1/2k^2}\right\} \leq c_1(n)\:
\ex(E,2,\nu)^{\tfrac{1}{2(2n+1)}}.
\end{equation}
We require 
\begin{equation}\label{eqep01}
\e_0(n)\leq \omega\big(n,\tfrac 1{4k},2k^2,\tfrac 1{2k^2}\big),
\end{equation}
where $\omega$ is given by Lemma \ref{lemposition}.
Two further assumptions on
$\e_0(n)$ will be made later in 
\eqref{eqsceltaep0} and \eqref{eqsceltaep0bis}.
By \eqref{eqriscalati}, $E$ is a $(2k^2,\tfrac1{2k^2})$-minimum in $C_2$.
Letting $M=\partial E\cap C_1$, by Lemma \ref{lemposition} and \eqref{eqep01} we have 
\begin{align}
& |\h(p)|< \tfrac1{4k}\text{ for any }p\in M\label{eqaltezza14},
\\
& \Ln\big( \{p\in E\cap C_1 : \h(p)>\tfrac1{4k}\}\big) =0,
\\
& \Ln\big( \{p\in C_1\setminus E : \h(p)<-\tfrac1{4k}\}\big) =0.
\end{align}
By \eqref{excmeasexc} and \eqref{eqeccraggi} we get
\begin{equation}\label{eqstep1A}
0\leq \SQmu(M)-\leb^{2n} (D_1)\leq \ex(E,1,\nu)\leq 2^{2n+1}\ex(E,2,\nu).
\end{equation}
Corollary \ref{corperisoper} implies that, for a.e.~$s\in(-1,1)$, 
\begin{equation}\label{eqstep1B}
0\leq \SQmu(M_s)-\leb^{2n} (E_s\cap D_1)\leq \ex(E,1,\nu)
\leq
2^{2n+1}\ex(E,2,\nu)
\end{equation}
where, as before, $M_s=M\cap\{\h>s\}$.

{\em Step 2.} Consider the function $f:(-1,1)\to[0,\SQmu(M)]$ defined by
\[
f(s)=\SQmu(M_s), \quad s\in(-1,1).
\]
The function $f$ is nonincreasing, 
right-continuous and, by \eqref{eqaltezza14}, it
satisfies
\begin{align*}
& f(s)=\SQmu(M)\text{ for any }s\in(-1,-\tfrac1{4k}],
\\
& f(s)=0\text{ for any }s\in(\tfrac1{4k},1].
\end{align*}
In particular, there exists 
$s_0\in(-\tfrac1{4k},\tfrac1{4k})$ such that
\begin{equation}\label{eqproprietat0}
\begin{split}
& f(s)\geq\SQmu(M)/2\text{ for any }s<s_0,
 \\
& f(s)\leq\SQmu(M)/2\text{ for any }s\geq s_0.
\end{split}
\end{equation}
Let $s_1\in(s_0,\tfrac1{4k})$ be such that
\begin{align}
& f(s)\geq\sqrt{\ex(E,2,\nu)}\text{ for any }s< s_1,
 \label{eqsceltat1}\\
& f(s)=\SQmu(M_s)\leq\sqrt{\ex(E,2,\nu)}
 \text{ for any }s\geq
s_1.\nonumber
\end{align}
We claim that there exists $c_2(n)>0$ such that
\begin{equation}\label{eqast3}
   \h(p) \leq s_1 + c_2(n) \ex(E,2,\nu)^{\tfrac1{2(2n+1)}}
 \quad\text{for any }p\in
    \partial E\cap C_{1/2k^2}.
\end{equation}
Inequality \eqref{eqast3} is trivial for any 
$p\in \partial E\cap C_{1/2k^2}$
with $\h(p)\leq s_1$. 
If $p\in \partial E\cap C_{1/2k^2}$ is such that $\h(p)>
s_1$, then
\begin{align*}
B_{{\text{\scriptsize\Fontamici h}}(p)-s_1}(p) \subset B_{1/2k}(p)\subset
B_{1/k}  \subset C_1.
\end{align*}
We  used the fact that $\| p\|_\KR \leq \tfrac1{2k}$ whenever $p\in
C_{1/2k^2}$, see \eqref{eqpallecilindri}. Therefore
\[
B_{{\text{\scriptsize\Fontamici h}}(p)-s_1}(p) \subset C_1 \cap\{\h>s_1\}
\]
and, by the density estimate \eqref{eqdensitaperimetro} 
of Theorem \ref{teostimedensita}
in Appendix A,
\begin{align*}
        k_3(n)(\h(p)-s_1)^{2n+1}& 
    \leq      \mu_E(B_{{\text{\scriptsize\Fontamici h}}(p)-s_1}(p))
    \leq      \mu_E(C_1\cap\{\h>s_1\})
 \\
         &=\SQmu(M_{s_1})=f(s_1)
    \leq \sqrt{\ex(E,2,\nu)}.
\end{align*}
This proves \eqref{eqast3}.

\medskip

{\em Step 3.} We claim that there exists $c_3(n)>0$ such that
\begin{equation}\label{eqast4}
s_1 -s_0 \leq c_3(n) \ex(E,2,\nu )^{\tfrac1{2(2n+1)}}.
\end{equation}
By the coarea formula \eqref{eqcoarea} with 
$h = \chi_{C_1}$, $D_1^s=  \{ p \in C_1 : \h(p)=s \}$,
and $E^s =\{ p \in E : \h(p)=s \}$,
we have 
\[
 \int_{-1}^1 \int_{D_1^s}d\mu_{E^s}^s\,ds 
   =
   \int_{C_1}\sqrt{1-\langle \nu_E,X_1\rangle_g^2}\,d\mu_E 
\leq 
    \sqrt 2\int_M \sqrt{1+\langle \nu_E,X_1\rangle_g }\,d\SQmu.
\]
By H\"older inequality,
\eqref{eqdensitaperimetro},
%
\eqref {222}, and \eqref{eqstep1A}, we deduce that
\begin{equation}
 \label{eqrunggaldier}
\begin{split}
    \int_{-1}^1 \int_{D_1^s}d\mu_{E^s}^s\,ds 
   \leq  & \sqrt{2\SQmu(M)}
  \left( \int_M (1+\langle \nu_E,X_1\rangle_g)\,d\SQmu\right)^{1/2}
   \\ 
\leq & c_4(n) (\SQmu(M)- \leb^{2n}(D_1))^{1/2}
   \\
   \leq & c_5(n) \sqrt{\ex(E,2,\nu)}.
\end{split}
\end{equation}
By Corollary
\ref{corperisoper} and \eqref{eqstep1A}, 
we obtain, for a.e.~$s\in[s_0,s_1)$, 
\begin{equation}\label{sput}
\begin{split}
      \leb^{2n} (E_s\cap D_1) & \leq \SQmu (M_s) 
 =  f(s)\leq f(s_0)
     \\
    & \leq  \frac{\SQmu(M)}2 \leq  
  \frac{\leb^{2n} (D_1)+2^{2n+1}\ex(E,2,\nu)}{2}\leq\frac34
\leb^{2n} (D_1).
\end{split}
\end{equation}
The last inequality holds provided that  
\begin{equation}\label{eqsceltaep0}
2^{2n+1}\e_0(n)\leq \frac{\leb^{2n} (D_1)}4.
\end{equation}

Let $\Omega_s = (-s\ee_1)\ast D_1^s =(-s\ee_1)\ast D_1 \ast(s\ee_1)$
and $F_s = (-s\ee_1)\ast E^s$. 
We have
\begin{equation} \label{FF11}
\leb^{2n}(\Omega_s) = \leb^{2n}(D_1^s) = \leb^{2n}(D_1),
\end{equation}
and, by \eqref{sput},
\begin{equation} \label{FF22}
\leb^{2n}(F_s\cap \Omega_s) = \leb^{2n}(E^s\cap D_1^s) =
 \leb^{2n}(E_s\cap D_1)\leq \frac 34 \leb^{2n} (D_1).
\end{equation}
Moreover, by left invariance we also have
\begin{equation} \label{FF33}
 \mu_{E^s}^s(D_1^s) = \mu_{F_s}^0(\Omega_s).
\end{equation}

By \eqref{FF11}--\eqref{FF33} and  
Corollary  \ref{teoisoprelW}, there exists a constant $k(n)>0$
independent of $s\in (-1,1)$
such that 
\begin{equation}\label{FF44}
\mu_{E^s}(D_1^s ) =\mu_{F_s}^0(\Omega_s) 
\geq k(n)\, \leb^{2n} (F_s\cap
\Omega_s)^{\tfrac{2n}{2n+1}}
=
k(n)\, \leb^{2n} (E^s\cap
D_1^s)^{\tfrac{2n}{2n+1}}.
\end{equation}
This, together with \eqref{eqrunggaldier}, gives
\begin{align*}
c_6(n) \sqrt{\ex(E,2,\nu)}  
\stackrel{\hphantom{\eqref{eqsceltat1}}}{\geq} 
 &
  \int_{s_0}^{s_1} \leb^{2n} (E^s\cap D_1^s)^{\tfrac{2n}{2n+1}}\,ds
  \\
\stackrel{\eqref{eqstep1B}}{\geq} & 
  \int_{s_0}^{s_1}
\Big(\SQmu(M_s) -2^{2n+1}\ex(E,2,\nu))\Big)
    ^{\tfrac{2n}{2n+1}}\,ds
\\
\stackrel{\eqref{eqsceltat1}}{\geq} 
& 
\int_{s_0}^{s_1}
\left(\sqrt{\ex(E,2,\nu)}-2^{2n+1}\ex(E,2,\nu))\right)^{\tfrac{2n}{2n+1}}
\,ds\\
\stackrel{\hphantom{\eqref{eqsceltat1}}}{\geq} 
& 
\frac 12 \int_{s_0}^{s_1}
{\ex(E,2,\nu)}^{\tfrac{n}{2n+1}}\,ds.
\end{align*}
In the last inequality, we  require that $\e_0(n)$ satisfies
\begin{equation}
\label{eqsceltaep0bis}
\sqrt z -2^{2n+1}z \geq \tfrac 12 \sqrt z\quad \textrm{for all }
  z\in[0,\e_0(n)].
\end{equation}
It follows that
\[
c_6(n) \sqrt{\ex(E,2,\nu)} \geq \frac 12\:\ex(E,2,\nu)^{\tfrac{n}{2n+1}}
(s_1-s_0),
\]
and \eqref{eqast4} follows.

\medskip

{\em Step 4.} Recalling \eqref{eqast3} and \eqref{eqast4}, 
we proved that
there exist $\e_0(n)$ and $c_6(n)$ such that the following holds. 
If $E$ is a
$(2k^2,\tfrac1{2k^2})$-minimum of $H$-perimeter 
in $C_2$ such that
\[
0\in\partial E,\qquad \ex(E,2,\nu)\leq\e_0(n)
\]
and $s_0=s_0(E)$ satisfies \eqref{eqproprietat0}, then 
\begin{equation}\label{eqast2}
\h(p)-s_0 \leq c_7(n) \ex(E,2,\nu)^{\tfrac1{2(2n+1)}}\quad
 \text{for any }p\in\partial
E\cap C_{1/2k^2}.
\end{equation}

Let us introduce the mapping  $\Psi:\Hn\to\Hn$ 
\[
\Psi(x_1,x_2\dots,x_n,y_1,\dots,y_n,t)=(-x_1,-x_2,\dots,-x_n,y_1,\dots,y_n,-t)\,
.
\]
Then we have  $\Psi^{-1}=\Psi$, $\Psi(C_2)=C_2$, 
$\langle X_j ,\nu _{\Psi(F)}\rangle_g = -\langle X_j ,\nu _{F}\rangle_g 
\circ\Psi$,
$\langle Y_j ,\nu _{\Psi(F)}\rangle_g = \langle Y_j ,\nu _{F}\rangle_g 
\circ\Psi$, and $ \mu_{\Psi(F)} = \Psi_{\#}\mu_F$,  
for any set $F$ with locally finite $H$-perimeter; 
here, $\Psi_\#$ denotes the
standard push-forward of measures. 
Therefore, the set $\wt E=\Psi(\Hn\setminus E)$
satisfies the following properties:
\begin{itemize}
\item[i)] $\wt E$ is a $(2k^2,\tfrac1{2k^2})$-minimum 
of $H$-perimeter in $C_2$;
\item[ii)] $0\in\partial \wt E$ and
\[
\ex(\wt E,2,\nu) = \frac{1}{2^{Q}}\int_{\hfr \wt E\,\cap C_2}
|\nu_{\wt
E}-\nu| _g ^2 \,d\SQmu =
\ex( E,2,\nu )  \leq\e_0(n) ;
\]
\item[iii)] setting $\wt M=\hfr \wt E\,\cap C_1=\Psi(M)$ and 
$\wt f(s)=\SQmu(\wt
M\,\cap\{\h>s\})$,
we have 
\[
\begin{split}
& \wt f(s)\geq\SQmu(\wt M)/2=\SQmu(M)/2\text{ for any }s<-s_0,\\
& \wt f(s)\leq\SQmu( M)/2\text{ for any }s\geq -s_0.
\end{split}
\]
\end{itemize}
Formula \eqref{eqast2} for the set $\wt E$ gives 
\[
\h(p)+s_0 \leq c_7(n) \ex( E,2,\nu)^{\tfrac1{2(2n+1)}}\quad\text{for any
}p\in\partial \wt E\cap C_{1/2k^2}.
\]
Notice that we have $p\in\partial \wt E$ if and only if 
$\Psi(p)\in\partial  E$ and, moreover,  $\h(\Psi(p))=-\h(p)$.
Hence, we have
\begin{equation}\label{eqast2tilde}
-\h(p)+s_0 \leq c_7(n) \ex(E,2,\nu )^{\tfrac1{2(2n+1)}}\quad\text{for any
}p\in\partial E\cap C_{1/2k^2}.
\end{equation}
By \eqref{eqast2} and \eqref{eqast2tilde} we obtain
\begin{equation}\label{eqancoradue}
|\h(p)-s_0| \leq c_7(n) \ex(E,2,\nu)^{\tfrac1{2(2n+1)}}\quad\text{for any
}p\in\partial E\cap C_{1/2k^2},
\end{equation}
and, in particular,
\begin{equation}\label{eqancorauna}
|s_0|\leq c_7(n) \ex(E,2,\nu)^{\tfrac1{2(2n+1)}},
\end{equation}
because $0 \in\partial E\cap C_{1/2k^2}$. 
Inequalities \eqref{eqancoradue} and
\eqref{eqancorauna} give \eqref{eqast}. This completes  the proof.
\end{proof}

\section{Appendix A} 

We list some basic properties of $\Lambda$-minima of $H$-perimeter in $\H^n$.
The proofs are straightforward adaptations of the proofs for $\Lambda$-minima
of  perimeter in $\R^n$.

\begin{theorem}[Density estimates]\label{teostimedensita}
There exist  constants $k_1(n), k_2(n), k_3(n), k_4(n)>0$ with the
following property. If $E$ is a $\Lrz$-minimum of  $H$-perimeter in
$\Omega\subset \H^n$, 
$p\in\partial E\cap\Omega$, $B_r(p)\subset\Omega$ and $s<r$, 
then
\begin{align}
& k_1(n)\leq \frac{\Ln(E\cap B_s(p))}{s^{2n+2}}\leq
k_2(n)\label{eqdensitavolume}
\\
& k_3(n)\leq \frac{\mu_E(B_s(p))}{s^{2n+1}}\leq
k_4(n)\label{eqdensitaperimetro}.
\end{align}
\end{theorem}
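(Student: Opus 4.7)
The plan is to follow the standard $\Lambda$-minimality comparison scheme, the Heisenberg version of the argument in Maggi's Chapter~21. The upper volume bound is immediate from $\Ln(B_s(p)) = c(n)s^{2n+2}$, which follows from the left-invariance of $\Ln$ and the homogeneity of the dilations. For the remaining three estimates, set $V(s)=\Ln(E\cap B_s(p))$ and $\phi(s)=\mu_E(B_s(p))$, and use the two natural competitors $F_-=E\setminus B_s(p)$ and $F_+=E\cup B_s(p)$.

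First I would prove the upper perimeter bound. Since $E\Delta F_\pm\subset B_s(p)\subset\subset B_{s'}(p)\subset\subset \Omega$ for any $s<s'<r$, $\Lrz$-minimality in $B_{s'}(p)$ gives
\[
\mu_E(B_{s'}(p))\leq \mu_{F_\pm}(B_{s'}(p))+\Lambda\,\Ln(E\Delta F_\pm).
\]
Letting $s'\to s^+$ along an $\mathcal L^1$-full set of radii for which $\mu_E(\partial B_s(p))=0$, the right-hand side collapses to $\SQmu(E\cap \partial B_s(p))+\Lambda V(s)$ or to $\SQmu((B_s(p)\setminus E)\cap \partial B_s(p))+\Lambda(\Ln(B_s(p))-V(s))$. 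Summing the two, one obtains $\phi(s)\leq \mu_{B_s(p)}(\H^n)+\Lambda\,\Ln(B_s(p))$; the perimeter of the Kor\`anyi ball scales as $s^{2n+1}$, and $\Lambda s\leq \Lambda r\leq 1$ makes the second summand of lower order, giving $\phi(s)\leq k_4(n)\,s^{2n+1}$.

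Next, for the lower volume bound, I would combine the $F_-$-comparison with an isoperimetric inequality applied to $E\cap B_s(p)$. Using Pansu's global inequality (or, alternatively, the relative inequality in Kor\`anyi balls, which are NTA/John domains exactly in the spirit of Section~\ref{trebis}),
\[
V(s)^{\frac{2n+1}{2n+2}}\leq c_{\mathrm{iso}}\bigl(\mu_E(B_s(p))+\SQmu(E\cap \partial B_s(p))\bigr)\leq c\,\SQmu(E\cap \partial B_s(p))+c\Lambda V(s).
\]
The coarea formula applied to the Kor\`anyi distance from $p$ yields $\SQmu(E\cap \partial B_s(p))\leq C V'(s)$ for a.e.~$s$; the constraint $\Lambda s\leq 1$ absorbs the $\Lambda V(s)$ term on the left for small radii. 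The resulting ODI $V^{(2n+1)/(2n+2)}\leq cV'$, together with $V\not\equiv 0$ (since $p\in\partial E$, so $V(s)>0$ for every $s>0$), integrates to $V(s)\geq k_1(n)\,s^{2n+2}$.

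Finally, the lower perimeter bound follows by applying the lower volume bound both to $E$ and to $\Hn\setminus E$ (which is also a $\Lrz$-minimum with the same perimeter measure). This gives $V(s)\geq k_1(n)\,s^{2n+2}$ and $\Ln(B_s(p))-V(s)\geq k_1(n)\,s^{2n+2}$, and the relative isoperimetric inequality in the Kor\`anyi ball yields $\phi(s)\geq k_3(n)\,s^{2n+1}$. The main technical obstacle throughout is the rigorous handling of the slice term $\SQmu(E\cap \partial B_s(p))$ and the passage from the strict-containment admissibility to the closed-ball inequality; both are standard once one restricts to the $\mathcal L^1$-full set of radii for which the coarea identity holds and $\mu_E$ does not charge $\partial B_s(p)$, and then extends from those radii by monotonicity of $V$ and lower semicontinuity of $A\mapsto \mu_E(A)$ on open sets.
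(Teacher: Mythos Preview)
Your sketch is correct and follows exactly the approach the paper indicates: the paper does not actually prove this theorem but states only ``For a proof see \cite[Theorem 21.11]{maggi},'' remarking that the argument is a straightforward adaptation of the Euclidean $\Lambda$-minimality comparison scheme. Your outline is precisely that adaptation.
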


For a proof see \cite[Theorem 21.11]{maggi}. By standard arguments Theorem
\ref{teostimedensita} implies the following corollary.

\begin{corollary}
If $E$ is a $\Lrz$-minimum of   $H$-perimeter in $\Omega\subset\H^n$, 
then  
\[
\SQmu((\partial E\setminus \hfr E)\cap\Omega)=0.
\]
\end{corollary}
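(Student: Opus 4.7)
The plan is to prove that the set $A = (\partial E \setminus \hfr E) \cap \Omega$ satisfies $\SQmu(A) = 0$. The strategy is a standard density-comparison between the perimeter measure $\mu_E$ and the spherical measure $\SQmu$, the decisive point being that, by \eqref{missa}, $\mu_E$ vanishes on the complement of $\hfr E$ and in particular on $A$.

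First, I would exhaust $A$ by the Borel sets
\[
A_m = \bigl\{ p \in A : \overline{B}_{1/m}(p) \subset \Omega \text{ and } 1/m < r \bigr\}, \qquad m \in \N,
\]
with the second condition being vacuous when $r = \infty$, and reduce to showing $\SQmu(A_m) = 0$ for every $m$. Fix $m$ and $p \in A_m$: since $p \in \partial E$, the lower perimeter density \eqref{eqdensitaperimetro} of Theorem \ref{teostimedensita} gives $\mu_E(B_\varrho(p)) \geq k_3(n) \varrho^{2n+1}$ for every $0 < \varrho \leq 1/m$. Since the Kor\`anyi balls $B_\varrho$ and the box balls $U_\varrho$ are comparable, the same bound, up to a dimensional constant in the constant $k_3(n)$, holds for $\mu_E(U_\varrho(p))$.

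I would then invoke the classical density-comparison lemma (Federer \cite[2.10.19]{F} or its adaptation to doubling metric spaces): if a Borel measure has a uniform positive $(2n+1)$-dimensional lower density on a set $A_m$ with respect to the balls $U_\varrho$, then
\[
\SQmu(A_m) \leq C(n)\, \mu_E(A_m).
\]
This is available in $\Hn$ because $(\Hn, \|\cdot\|_\infty)$ is a doubling metric space in which Vitali's covering theorem holds; concretely, one picks for each $\delta > 0$ a Vitali disjoint subfamily $\{U_{\varrho_i}(p_i)\}_{i \in \N}$ of balls centered at points of $A_m$ with $\varrho_i < \delta$ whose $5$-fold enlargements still cover $A_m$, estimates $\sum_i \varrho_i^{2n+1} \leq c(n)^{-1} \sum_i \mu_E(U_{\varrho_i}(p_i)) \leq c(n)^{-1} \mu_E(\{q \in \Omega : \mathrm{dist}_\infty(q,A_m) < \delta\})$ by disjointness, and lets $\delta \to 0$.

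Finally, since $A_m \cap \hfr E = \emptyset$ by construction, formula \eqref{missa} yields $\mu_E(A_m) = (\SQmu \res \hfr E)(A_m) = \SQmu(A_m \cap \hfr E) = 0$. Hence $\SQmu(A_m) \leq C(n)\, \mu_E(A_m) = 0$ for every $m$, and $\SQmu(A) = 0$. The only non-trivial point is the density-comparison step, which although routine in the metric-measure setting must be justified carefully because $\SQmu$ is built via the $U_r$-balls rather than the Kor\`anyi balls appearing in the density estimate \eqref{eqdensitaperimetro}; everything else is direct bookkeeping combining \eqref{eqdensitaperimetro} and \eqref{missa}.
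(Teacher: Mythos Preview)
Your proposal is correct and is exactly the ``standard argument'' the paper alludes to; the paper itself gives no proof beyond the sentence ``By standard arguments Theorem~\ref{teostimedensita} implies the following corollary,'' so there is nothing further to compare against.

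One small refinement to your Vitali sketch: as written, letting $\delta\to 0$ in the bound $\sum_i \varrho_i^{2n+1}\leq c(n)^{-1}\mu_E(\{q:\mathrm{dist}_\infty(q,A_m)<\delta\})$ yields only $\SQmu(A_m)\leq C(n)\,\mu_E(\overline{A_m})$, and $\overline{A_m}$ may well meet $\hfr E$, so one cannot conclude $\mu_E(\overline{A_m})=0$ directly from \eqref{missa}. The standard fix (and the way Federer's 2.10.19 is actually proved) is to first use outer regularity of the Radon measure $\mu_E$ to pick an open $V\supset A_m$ with $\mu_E(V)<\mu_E(A_m)+\epsilon$, run the Vitali covering with balls contained in $V$, obtain $\SQmu(A_m)\leq C(n)\,\mu_E(V)$, and then let $\epsilon\to 0$. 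Since you invoke Federer's lemma by citation this is already covered, but the inline sketch should be adjusted accordingly.
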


\begin{theorem}\label{teocompattezza}
Let $(E_j)_{j\in\N}$ be a sequence of $\Lrz$-minima of  
$H$-perimeter in an open
set $\Omega\subset\H^n$, $\Lambda r\leq 1$. Then there exists a $\Lrz$-minimum
$E$ of $H$-perimeter  in $\Omega$ and a subsequence $(E_{j_k})_{k\in\N}$ 
such that
\[
  E_{j_k} \to E\quad \text{in $L^1_{\mathrm{loc}}(\Omega)$}
\quad\text{and}\quad 
\nu_{E_{j_k}}\, 
\mu_{E_{j_k}} 
\stackrel{\ast}{\rightharpoonup} \nu_E \mu_E
\]
as $k\to\infty$. Moreover, the measure theoretic boundaries  
$\partial E_{j_k}$ converge to $\partial E$ in the sense of Kuratowski, i.e.,
\begin{itemize}
\item[i)] if $p_{j_k}\in \partial E_j\cap \Omega $ and $p_{j_k}\to p\in \Omega$, then
$p\in\partial E$;
\item[ii)] if $p\in\partial E\cap \Omega $, then there exists 
a sequence $(p_{j_k})_{k\in\N}$ such that
$p_{j_k}\in \partial E_{j_k}\cap \Omega $ and $p_{j_k}\to p$.
\end{itemize}
\end{theorem}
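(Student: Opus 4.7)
The plan is to follow the classical compactness scheme for $\Lambda$-minima of perimeter in $\R^n$, adapted to the sub-Riemannian setting. The argument splits into three parts: extraction of an $L^1_{\mathrm{loc}}$-convergent subsequence together with weak-$\ast$ convergence of the vector perimeter measures; verification that the limit set is itself a $\Lrz$-minimum; and deduction of Kuratowski convergence of the measure-theoretic boundaries from the density estimates of Theorem \ref{teostimedensita}.

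First I would establish uniform local bounds on $\mu_{E_j}(A)$ for each open $A\subset\subset \Omega$. For $p\in A$ and almost every small $s<r$ with $B_s(p)\subset\subset\Omega$, comparing $E_j$ with the competitor $F=E_j\setminus B_s(p)$ in Definition \ref{deflambdamin} gives
$$\mu_{E_j}(B_s(p))\leq \mu_F(B_s(p))+\Lambda\Ln(E_j\cap B_s(p))\leq C(n)\,s^{2n+1}+\Lambda\Ln(B_s(p)),$$
the bound on $\mu_F(B_s(p))$ being obtained through standard radial slicing of the Kor\`anyi sphere. Covering $A$ by finitely many such balls yields a uniform bound on $\mu_{E_j}(A)$. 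The compactness theorem for sets of locally finite $H$-perimeter (\cite[Theorem 2.2.2]{FSSChouston}) then produces a subsequence $E_{j_k}\to E$ in $L^1_{\mathrm{loc}}(\Omega)$, where $E$ has locally finite $H$-perimeter by lower semicontinuity. Applying the Gauss--Green formula to any $\varphi\in C^1_c(\Omega;H)$ gives $\int\langle\varphi,\nu_{E_{j_k}}\rangle_g\,d\mu_{E_{j_k}}=-\int_{E_{j_k}}\divg\varphi\,d\Ln\to -\int_E\divg\varphi\,d\Ln=\int\langle\varphi,\nu_E\rangle_g\,d\mu_E$, which is exactly the weak-$\ast$ convergence $\nu_{E_{j_k}}\mu_{E_{j_k}}\stackrel{\ast}{\rightharpoonup}\nu_E\mu_E$.

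Next I would check that $E$ is a $\Lrz$-minimum. Given a competitor $F$ with $E\Delta F\subset\subset B_s(p)\subset\subset \Omega$ and $s<r$, I would select a slightly larger radius $s<s'<r$ with $\mu_E(\partial B_{s'}(p))=0$ (possible outside a countable set) and for which the Fubini-type bound $\Ln((E_{j_k}\Delta E)\cap A_{s',\delta})/\delta\to 0$ as $k\to\infty$ and then $\delta\to 0$ holds, with $A_{s',\delta}$ a thin spherical shell around $\partial B_{s'}(p)$. Define the patchwork competitor
$$F_k=(F\cap B_{s'}(p))\cup(E_{j_k}\setminus B_{s'}(p));$$
for $k$ large the $L^1$-convergence ensures $E_{j_k}\Delta F_k\subset\subset B_{s'}(p)\subset\subset\Omega$, so Definition \ref{deflambdamin} applied to $E_{j_k}$ gives $\mu_{E_{j_k}}(B_{s'}(p))\leq \mu_{F_k}(B_{s'}(p))+\Lambda\Ln(E_{j_k}\Delta F_k)$. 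Since $F=E$ outside $B_s(p)$, the right-hand perimeter decomposes as $\mu_F(B_s(p))+\mu_{E_{j_k}}(B_{s'}(p)\setminus\overline{B_s(p)})$ plus a jump contribution on $\partial B_{s'}(p)$ controlled by the Fubini error. Passing to the limit using lower semicontinuity of the $H$-perimeter on $B_s(p)$ and the weak-$\ast$ convergence on the annulus (whose boundary is $\mu_E$-null by construction) yields $\mu_E(B_s(p))\leq \mu_F(B_s(p))+\Lambda\Ln(E\Delta F)$, as required.

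Finally, Kuratowski convergence follows from the density estimates. For (i), if $p_{j_k}\in\partial E_{j_k}$ and $p_{j_k}\to p\in\Omega$, then for small $\rho>0$ and large $k$ one has $B_{\rho/2}(p_{j_k})\subset B_\rho(p)$, and \eqref{eqdensitavolume} provides a uniform positive lower bound on both $\Ln(E_{j_k}\cap B_\rho(p))$ and $\Ln(B_\rho(p)\setminus E_{j_k})$; the $L^1_{\mathrm{loc}}$-convergence transfers this to $E$, forcing $p\in\partial E$. For (ii), if no sequence $p_{j_k}\to p$ with $p_{j_k}\in\partial E_{j_k}$ exists, then along a subsequence $B_\rho(p)\cap\partial E_{j_k}=\emptyset$ for some $\rho>0$ and all large $k$; connectedness of $B_\rho(p)$ forces $E_{j_k}\cap B_\rho(p)$ to be, up to a $\Ln$-null set, either empty or the full ball, and this dichotomy passes to the $L^1$-limit, contradicting $p\in\partial E$. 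The main obstacle is the second step, specifically the control of $\mu_{F_k}(B_{s'}(p))$ across the interface $\partial B_{s'}(p)$: one must simultaneously enforce $\mu_E(\partial B_{s'}(p))=0$ and the Fubini-type smallness of the jump contribution, uniformly for radii close to $s'$, using only the $L^1$-convergence of $E_{j_k}$ to $E$.
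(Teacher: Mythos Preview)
Your proposal is correct and follows precisely the approach the paper indicates: the paper does not supply a proof of Theorem \ref{teocompattezza} but states that it is a straightforward adaptation of the classical $\R^n$ argument in \cite[Chapter 21]{maggi}, and your sketch carries out exactly that adaptation (uniform local perimeter bounds from comparison, $BV$-type compactness, patching competitors to pass $\Lrz$-minimality to the limit, and Kuratowski convergence via the density estimates of Theorem \ref{teostimedensita}). One minor point: the reference \cite[Theorem 2.2.2]{FSSChouston} in the paper is an approximation result rather than a compactness theorem, so you should cite a Rellich-type compactness for sets of locally finite $H$-perimeter (e.g.\ \cite{GN}) at that step.
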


For a proof in the case of the perimeter in $\R^n$,
see \cite[Chapter 21]{maggi}.



\section{Appendix B}
We define a Borel unit normal $\nu_R$ to an 
$\SQmu$-rectifiable set $R\subset\H^n$ and we show that 
the definition is well posed $\SQmu$-almost everywhere, up to the sign.
The normal $\nu_S$ to an $H$-regular hypersurface $S\subset\H^n$ is defined in 
\eqref{nuesse}.

\begin{definition}\label{def:normrett}
Let $R\subset\H^n$ be an $\SQmu$-rectifiable set such that 
\begin{equation}\label{eq:tiziospagnolo}
 \mathcal S^{2n+1} \Big (R\setminus \bigcup_{j\in\N} S_j\Big) = 0
\end{equation}
for a sequence of $H$-regular hypersurfaces $(S_j)_{j\in\N}$ in $\H^n$. 
For any   $p\in R\cap \bigcup_{j\in\N} S_j$ we define
\[
\nu_R(p)=\nu_{S_{\bar\jmath}}(p),
\]
where $\bar\jmath$ is the unique integer 
such that $p\in S_{\bar\jmath}\setminus \bigcup_{j<\bar\jmath} S_j$.
\end{definition}

We show that Definition   \ref{def:normrett} is well posed, up to a sign, 
for $\SQmu$-a.e.~$p$. Namely, let  
$(S_j^1)_{j\in\N}$ and $(S_j^2)_{j\in\N}$ 
be two sequences of $H$-regular hypersurfaces  
in $\H^n$ for which \eqref{eq:tiziospagnolo} holds
and denote by  $\nu^1_R$ and $\nu_R^2$, respectively, the associated normals to $R$ according to Definition \ref{def:normrett}.
We show that $\nu_R^1=\nu_R^2$ $\SQmu$-a.e.~on $R$, up the the sign.

Let $A\subset R$ be the set of points such that 
either $\nu^1_R(p)$   is not defined, or  
$\nu^2_R(p)$  is not defined, or
they are both defined and 
$\nu^1_R(p)\neq\pm\nu_R^2(p)$.
It is enough to show that $\SQmu(A)=0$. 
This is a consequence of the following lemma.

\begin{lemma}
Let $S_1,S_2$ be two $H$-regular hypersurfaces  in $\H^n$ and let
\[
A=\{p\in S_1\cap S_2:\nu_{S_1}(p)\neq\pm\nu_{S_2}(p)\}.
\]
Then, the Hausdorff dimension of $A$ in the Carnot-Carath\'eodory metric is at
most $2n$, $\mathrm{dim}_{CC}(A)\leq 2n$,
and, in particular, $\SQmu(A)=0$.
\end{lemma}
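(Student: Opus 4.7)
The plan is to reduce the statement to a direct application of \cite[Corollary 4.4]{FSSC-adv}, in the same spirit as the argument used in the proof of Lemma~\ref{lem:trascurabile}. The key observation is that $A$ is precisely the set where $S_1$ and $S_2$ meet transversally in the horizontal sense, and transverse intersection of two $H$-regular hypersurfaces is locally a $2$-codimensional $H$-regular surface, whose Carnot--Carath\'eodory Hausdorff dimension is bounded by $2n$.

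First I would fix $p\in A$ and exploit the definition of $H$-regular hypersurface to find $r>0$ and functions $f_1,f_2\in C^1_H(B_r(p))$ with
\[
S_i\cap B_r(p)=\{q\in B_r(p):f_i(q)=0\},\qquad \nablaH f_i(p)\neq 0,\qquad i=1,2.
\]
By formula \eqref{nuesse}, the assumption $\nu_{S_1}(p)\neq\pm\nu_{S_2}(p)$ is equivalent to the linear independence of $\nablaH f_1(p)$ and $\nablaH f_2(p)$. Since $\nablaH f_1$ and $\nablaH f_2$ are continuous, after shrinking $r$ this independence persists throughout $B_r(p)$. Hence $S_1\cap S_2\cap B_r(p)=\{f_1=f_2=0\}$ is a $2$-codimensional $H$-regular surface in the sense of \cite{FSSC-adv}, and $A\cap B_r(p)$ is contained in it.

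Next I would invoke \cite[Corollary 4.4]{FSSC-adv} to deduce
\[
\mathrm{dim}_{CC}\big(A\cap B_r(p)\big)\leq \mathrm{dim}_{CC}\big(S_1\cap S_2\cap B_r(p)\big)\leq 2n.
\]
Since $\H^n$ is second countable, the open cover $\{B_{r(p)}(p)\}_{p\in A}$ admits a countable subcover, and countable stability of Hausdorff dimension gives $\mathrm{dim}_{CC}(A)\leq 2n$. Finally, because the Carnot--Carath\'eodory metric is locally bi-Lipschitz equivalent to the box metric defining $\SQmu$, the strict inequality $2n<2n+1$ yields $\SQmu(A)=0$.

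The main (minor) obstacle is verifying that the local picture around each $p\in A$ really produces an $H$-regular surface of codimension $2$: this requires checking that the horizontal-gradient transversality needed by \cite{FSSC-adv} is exactly what the continuity of $\nablaH f_1,\nablaH f_2$ delivers on a small enough ball. Once this is in place, the Hausdorff dimension bound and the passage from $\mathrm{dim}_{CC}$ to $\SQmu$-negligibility are routine.
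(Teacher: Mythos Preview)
Your argument is correct, but it differs from the paper's own proof of this lemma. The paper argues via blow-ups: at each $p\in A$ the blow-up of $S_i$ is the vertical hyperplane $\Pi_i\times\R$ orthogonal to $\nu_{S_i}(p)$, so the blow-up of $A$ at $p$ sits inside $(\Pi_1\cap\Pi_2)\times\R$ with $\dim(\Pi_1\cap\Pi_2)=2n-2$; the dimension bound then follows from a separate self-contained covering lemma (Lemma~\ref{lem:dimblowup}) asserting that any set whose blow-ups are contained in $\Pi_p\times\R$ with $\dim\Pi_p=k$ has $\dim_{CC}\leq k+2$. You instead recycle the strategy of Lemma~\ref{lem:trascurabile}: transversality of the horizontal normals makes $S_1\cap S_2$ locally a $2$-codimensional $H$-regular surface, and \cite[Corollary 4.4]{FSSC-adv} gives the bound directly. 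Your route is shorter and stays within machinery the paper already invokes; the paper's route is more elementary in that it avoids the higher-codimension theory of \cite{FSSC-adv} and replaces it with the explicit covering estimate of Lemma~\ref{lem:dimblowup}, which is of some independent interest.
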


\begin{proof}
 The blow-up of $S_i$, $i=1,2$,
at a point $p\in A$ is a 
vertical hyperplane $\Pi_i\times\R\subset\R^{2n}\times\R\equiv\H^n$,
see e.g.~\cite{FSSCMathAnn},
where:
\begin{itemize}
\item[i)] by blow-up of $S_i$ at $p$ we mean the limit
\[
\lim_{\lambda\to \infty} \lambda(p^{-1}\ast S_i)
\]
in the Gromov-Hausdorff sense.
Recall that, for $E\subset\H^n$, we define 
$\lambda E = \{ (\lambda z,\lambda^2 t)\in\H^n: (z,t)\in E\}$).

\item[ii)] For $i=1,2$, $\Pi_i\subset\R^{2n}$ is the normal 
hyperplane to $\nu_{S_i}(p)\in H_p\equiv\R^{2n}$.
\end{itemize}

\noindent It follows that the blow-up of $A$ at $p$ is 
contained in the blow-up of $S_1\cap S_2$ at $p$, i.e., 
in $(\Pi_1\cap\Pi_2)\times \R$. 
Since $\nu_{S_1}(p)\neq\pm\nu_{S_2}(p)$, $\Pi_1\cap\Pi_2$ 
is a $(2n-2)$-dimensional plane in $\R^{2n}$, 
and we  conclude thanks to the following lemma.
\end{proof}

\begin{lemma}\label{lem:dimblowup}
Let $k=0,1,\ldots, 2n$ and  $A\subset\H^n$ be such that 
for any $p\in A$, the blow-up of $A$ at $p$ is contained 
in $\Pi_p\times\R$ for some plane $\Pi_p\subset\R^{2n}$ of dimension $k$. 
Then we have   $\mathrm{dim}_{CC}(A)\leq k+2$.
\end{lemma}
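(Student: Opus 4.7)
The plan is to show $\mathcal H^s(A)=0$ for every $s>k+2$, where $\mathcal H^s$ is the Hausdorff measure built from the Kor\'anyi metric (equivalent, up to bi-Lipschitz constants and hence Hausdorff-dimension-preserving, to $d_{CC}$). Decomposing $A$ into countably many bounded pieces, I may assume $A\subset B_R(0)$ for some $R>0$.

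First I would upgrade the blow-up hypothesis to a quantitative containment. By left-invariance and the fact that the dilation $\lambda$ is an automorphism of $d_{CC}$, the inclusion $\lim_{\lambda\to\infty}\lambda(p^{-1}\ast A)\subset\Pi_p\times\R$ is equivalent to the assertion that, for every $p\in A$ and every $\eta\in(0,1)$, there exists $r_0(p,\eta)>0$ such that, for all $r<r_0(p,\eta)$,
\[
A\cap B_r(p)\subset\bigl\{q\in\H^n:\,d_{CC}\bigl(q,\,p\ast(\Pi_p\times\R)\bigr)<\eta r\bigr\}.
\]
The set $p\ast(\Pi_p\times\R)$ is a coset of a homogeneous Lie subgroup with $k$ horizontal generators and one central generator, so its homogeneous dimension is $k+2$. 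A standard doubling argument on $\Pi_p\times\R$ then provides a covering of the right-hand side, intersected with $B_r(p)$, by at most $N(\eta)\leq c(n)\eta^{-(k+2)}$ balls of radius $\eta r$.

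The second step is a geometric iteration on scales. Fix $s=k+2+\delta$ with $\delta>0$, and choose $\eta$ so small that $c(n)\eta^\delta\leq\tfrac12$. Set $A_m=\{p\in A:r_0(p,\eta)\geq 1/m\}$, so that $A=\bigcup_m A_m$. For $\epsilon\in(0,1/m]$ let $I_m(\epsilon)$ denote the infimum of $\sum_j r_j^s$ over coverings of $A_m$ by Kor\'anyi balls $B_{r_j}(p_j)$ with $p_j\in A_m$ and $r_j\leq\epsilon$; boundedness of $A_m$ ensures $I_m(1/m)<\infty$. Replacing each ball in a near-optimal covering by the $N(\eta)$ children of radius $\eta r_j$ produces an admissible cover at scale $\eta\epsilon$ of total weight at most
\[
c(n)\eta^{-(k+2)}\cdot\eta^s\,I_m(\epsilon)=c(n)\eta^\delta\,I_m(\epsilon)\leq\tfrac12\,I_m(\epsilon).
\]
Iterating yields $I_m(\eta^j/m)\to 0$ and hence $\mathcal H^s(A_m)=0$; summing in $m$ gives $\mathcal H^s(A)=0$ and so $\mathrm{dim}_{CC}A\leq k+2$.

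The main obstacle will be the first step: turning the purely qualitative Gromov--Hausdorff convergence of the rescaled sets into the uniform $\eta r$-tubular containment above, and bounding the covering number of the $\eta r$-tube around $p\ast(\Pi_p\times\R)\cap B_r(p)$ by the sharp $c(n)\eta^{-(k+2)}$. Both facts rely on the group structure---the dilations are $d_{CC}$-automorphisms and $\Pi_p\times\R$ is a homogeneous subgroup whose Haar measure is $(k+2)$-Ahlfors regular---but making these precise is the only real technical work; once they are in hand, the scale iteration is routine.
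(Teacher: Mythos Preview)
Your argument is correct and follows essentially the same route as the paper: extract from the blow-up hypothesis a tubular containment $p^{-1}\ast A\cap U_r\subset(\Pi_p)_{\eta r}\times\R$, stratify $A$ according to the scale $r_0(p,\eta)$, cover the tube inside $U_r$ by $\lesssim\eta^{-(k+2)}$ balls of radius $\eta r$, and iterate to drive the $(k+2+\delta)$-content to zero. The only cosmetic difference is that you require your covering balls to be centered on $A_m$ from the outset, whereas the paper allows arbitrary centers and then re-centers (doubling the radius) at a point of $A_j$; your convention creates a small bookkeeping issue---the $N(\eta)$ children you produce need not be centered on $A_m$---but this is fixed by the same doubling trick, at the cost of an extra factor $2^s$ absorbed into $c(n)$.
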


\begin{proof}
We claim that for any $\eta>0$ we have  
\begin{equation}\label{eq:tesi1}
\mathcal S^{k+2+\eta}(A)=0.
\end{equation}
Let $\ep\in(0,1/2)$ be  such  
that $C\ep^{\eta}\leq 1/2$, where $C=C(n)$ 
is a constant that will be fixed later in the proof. 
By the definition of blow-up, 
for any $p\in A$ there exists $r_p>0$ such that
for all $r\in(0,r_p)$ we have
\[
(p^{-1}\ast A) \cap U_r\subset (\Pi_p)_{\ep r}\times\R,
\]
where $(\Pi_p)_{\ep r}$ denotes 
the $(\ep r)$-neighbourhood of $\Pi_p$ in $\R^{2n}$. 
For any $j\in\N$ set
\[
A_j=\{p\in A\cap B_j:r_p>1/j\}.
\]
To prove \eqref{eq:tesi1}, it is enough to prove that
\[
\mathcal S^{k+2+\eta}(A_j)=0
\]
for any fixed $j\geq 1$. This, in turn, will follow if we show that, for any fixed $\delta\in(0,\tfrac1{2j})$, one has
\begin{equation}\label{eq:tesi2}
\begin{split}
\inf \Big\{ \sum_{i\in\N} r_i^{k+2+\eta}  &  : A_j\subset \bigcup_{i\in\N}  
U_{r_i}(p_i), \, r_i <2\ep\delta \Big\}\leq
  \\
\leq\: & \frac 12
\inf \Big\{ \sum_{i\in\N} r_i^{k+2+\eta}   : A_j\subset \bigcup_{i\in\N}  
U_{r_i}(p_i), \, r_i <\delta \Big\}.
\end{split}
\end{equation}
Let  $(U_{r_i}(p_i))_{i\in \N}$ be a covering of $A_j$ 
with balls of radius smaller than $\delta$.
There exist points   $\bar p_i\in A_j$ 
such that $(U_{2r_i}(\bar p_i))_{i\in \N}$ 
is a covering of $A_j$ with balls of radius smaller 
than $2\delta<1/j$. By definition of $A_j$, we have 
\[
(\bar p_i^{-1}\ast A_j)\cap U_{2r_i}
\subset ((\Pi_{\bar p_i})_{\ep r_i}\times\R)\cap U_{2r_i}.
\]
The set $((\Pi_{\bar p_i})_{\ep r_i}\times\R)\cap U_{2r_i}$ 
can be covered by a family of balls $(U_{\ep r_i}(p_h^i))_{h\in H_i}$ 
of radius $\ep r_i<2\ep\delta$ in such a way that the cardinality of $H_i$
is bounded by $C\ep^{-k-2}$, where  
the constant $C$ depends only on $n$ 
and not on $\ep$. In particular, 
the family of balls $( U_{\ep r_i}(\bar p_i\ast p_h^i))_{i\in \N, h\in H_i}$  
is a covering of $A_j$ and
\[
\begin{split}
\sum_{i\in\N} \sum_{ h\in H_i} (\mathrm{radius}\:
U_{\ep r_i}(\bar p_i\ast p_h^i))^{k+2+\eta} 
& = \sum_{i\in \N} \sum_{h\in H_i} (\ep r_i)^{k+2+\eta}  
\leq C\ep^{-k-2} \sum_{i\in \N} (\ep r_i)^{k+2+\eta}\\
&= C\ep^{\eta} \sum_{i\in\N} r_i^{k+2+\eta} 
\leq \frac 12 \sum_{i} r_i^{k+2+\eta}.
\end{split}
\]
This proves \eqref{eq:tesi2} and concludes the proof.
\end{proof}

\bibliographystyle{acm}

\end{document}